\documentclass[11pt,a4paper]{amsart}[2000]
\usepackage{amsmath}
\usepackage{amssymb}
\usepackage{amsthm}
\usepackage[hang]{footmisc}
\usepackage[all]{xypic}
\textwidth  5.5 truein
\usepackage{hyperref}

\title{The nuclear dimension of $C^*$-algebras associated to homeomorphisms}

\author{Ilan Hirshberg}

\address{Department of Mathematics, Ben Gurion University of the Negev, \phantom{----------------}\linebreak\text{}\hspace{3.5mm}
P.O.B. 653, Be'er Sheva 84105, Israel}
\email{ilan@math.bgu.ac.il}

\author{Jianchao Wu}

\address{Westf{\"a}lische Wilhelms-Universit{\"a}t, Fachbereich Mathematik, \phantom{---------------------------}\linebreak \text{}\hspace{3.5mm} Einsteinstrasse 62, 48149 M{\"u}nster, Germany}
\email{jianchao.wu@uni-muenster.de}

\thanks{This research was supported by GIF grant 1137/2011, ERC Advanced Grant ToDyRiC 267079, and SFB 878 \emph{Groups, Geometry and Actions}.}

\allowdisplaybreaks

\theoremstyle{plain}

\newtheorem{Thm}{Theorem}[section]

\newtheorem{Lemma}[Thm]{Lemma}

\newtheorem{Prop}[Thm]{Proposition}

\theoremstyle{definition}

\newtheorem{Rmk}[Thm]{Remark}

\theoremstyle{plain}
\newtheorem{thm}[Thm]{Theorem}
\newtheorem{lem}[Thm]{Lemma}
\newtheorem{cor}[Thm]{Corollary}
\newtheorem{prop}[Thm]{Proposition}

\newtheorem{clm}[Thm]{Claim}

\theoremstyle{definition}
\newtheorem{defn}[Thm]{Definition}

\newtheorem{rmk}[Thm]{Remark}

\newcommand{\T}{{\mathbb T}}

\newcommand{\N}{{\mathbb N}}
\newcommand{\Z}{{\mathbb Z}}
\newcommand{\C}{{\mathbb C}}

\newcommand{\aut}{\mathrm{Aut}}

\newcommand{\eps}{\varepsilon}
\numberwithin{equation}{section}

\newcommand{\dr}{\mathrm{dr}}
\newcommand{\id}{\mathrm{id}}

\newcommand{\dimnuc}{\mathrm{dim}_{\mathrm{nuc}}}
\newcommand{\dimnucone}[0]{\dimnuc^{\!+1}}

\newcommand{\calpha}{\widehat{\alpha}}


\newcommand\set[1]{\left\{#1\right\}}  


 \newcommand{\IN}[0]{\mathbb{N}}

\newcommand{\CA}[0]{\mathcal{A}} \newcommand{\CB}[0]{\mathcal{B}}



\newcommand{\quer}[0]{\overline}

\newcommand{\fin}[0]{{\subset\!\!\!\subset}}

\newcommand{\del}[0]{\partial}

\newtheorem{lemma}[Thm]{Lemma}

\theoremstyle{definition}
\newtheorem{defi}[Thm]{Definition}

\numberwithin{equation}{Thm}

\begin{document}
\begin{abstract}
We show that if $X$ is a finite dimensional locally compact Hausdorff space, then the crossed product of $C_0(X)$ by any automorphism has finite nuclear dimension. This generalizes previous results, in which the automorphism was required to be free. As an application, we show that group $C^*$-algebras of certain non-nilpotent groups have finite nuclear dimension.
\end{abstract}
\maketitle

Nuclear dimension for $C^*$-algebras was introduced by Winter and Zacharias in \cite{winter-zacharias}, as a noncommutative generalization of covering dimension. This is a variant of the previous notion of decomposition rank (\cite{kirchberg-winter}), and is also applicable to non-quasidiagonal $C^*$-algebras. Since then, it has come to play a major role in structure and classification of $C^*$-algebras. It was shown in \cite{winter-zacharias} that if $X$ is a locally compact metrizable space, then $\dimnuc(C_0(X))$ coincides with the covering dimension of $X$, and the property of having finite nuclear dimension is preserved under various constructions: forming direct sums and tensor products, passing to quotients and hereditary subalgebras, and forming extensions. An important problem which was left open in \cite{winter-zacharias} is to understand the behavior of finite nuclear dimension under forming crossed products. It was shown in \cite{toms-winter} that finite nuclear dimension passes to crossed products by minimal homeomorphisms:  if $X$ is a compact metric space with finite covering dimension and $h \colon X \to X$ is a minimal homeomorphism, then denoting $\alpha(f) = f \circ h$, we have $\dimnuc(C(X) \rtimes_{\alpha}\Z) < \infty$. This was re-proved in a different way in \cite{HWZ}. The paper \cite{HWZ} develops a notion of \emph{Rokhlin dimension} for an automorphism of a $C^*$-algebra (extended in \cite{hirshberg-phillips} to the non-unital setting). It was shown there that in general, if $A$ has finite nuclear dimension and $\alpha \in \aut(A)$ has finite Rokhlin dimension, then $A \rtimes_{\alpha} \Z$ has finite nuclear dimension as well, and furthermore, for a minimal homeomorphism as above, the induced automorphism on $C(X)$ always has finite Rokhlin dimension. Szab\'o (\cite{szabo}) then showed that the minimality condition can be weakened to freeness: if $X$ is as above and $h \colon X \to X$ has no periodic points, then $\alpha$ has finite Rokhlin dimension (and therefore, by \cite[Theorem 4.1]{HWZ}, the crossed product has finite nuclear dimension). In fact, Szab\'o's result works for actions of $\Z^m$ as well. This uses the marker property, introduced by Gutman in \cite{gutman}. Those results were further extended to free actions of finitely generated nilpotent groups in \cite{SWZ}.

For the case of integer actions arising from homeomorphisms, this leaves the case of actions which also have periodic points. Those include important examples. For instance, suppose $G$ is a countable abelian group and $\widehat{G}$ has finite covering dimension, and suppose $\alpha$ is an automorphism of $G$. The group $C^*$-algebra $C^*(G \rtimes_{\alpha} \Z)$ is isomorphic to a crossed product $C(\widehat{G}) \rtimes \Z$, and such actions are never free: an obvious fixed point is $1_{\widehat{G}}$, and in some cases, such as the lamplighter group, the set of periodic points may even be dense.

In this paper, we settle the case of crossed products arising from homeomorphisms. We show in Theorem \ref{thm:estimate-dimnuc-Z} that if $X$ is a locally compact metrizable space with finite covering dimension and $\alpha \in \aut(C_0(X))$ then
\[
  \dimnuc(C_0(X) \rtimes_{\alpha} \Z) \leq 2 \big( \dim(X) \big)^2 + 6 \dim(X) +4 \; .
\]
The non-metrizable case is addressed in Corollary \ref{cor:nonmetrizable}. 

As indicated above, our formula implies that the group $C^*$-algebra $C^*(G \rtimes_{\alpha} \Z)$ has finite nuclear dimension, whenever $G$ is abelian with finite dimensional Pontryagin dual. Notable examples of such groups include:
 \begin{enumerate}
  \item The lamplighter group $(\Z / 2 \Z) \wr \Z$ is the semidirect product of $G = \bigoplus_{n \in \Z} \Z / 2\Z$ by $\Z$ using the shift action. Notice that by \cite[Corollary 3.5]{CDE13}, the group $C^*$-algebra $C^*((\Z / 2 \Z) \wr \Z)$ is not strongly quasidiagonal, and thus it has infinite decomposition rank (\cite[Theorem 4.4]{kirchberg-winter}). This shows that there exists a group $C^*$-algebra which has finite nuclear dimension but infinite decomposition rank.
  \item When $G = \Z^n$, we obtain polycyclic groups which may not be nilpotent. More precisely, if we pick a matrix $A \in \mathrm{GL}_n(\Z)$ such that $A$ has no eigenvalues which are roots of unity and use it to define an automorphism of $\Z^n$, then the crossed product $\Z^n \rtimes_A \Z$ by this automorphism has trivial center, and in particular it is not nilpotent.
 \end{enumerate}
 
We note that nuclear dimension of group $C^*$-algebras was studied recently in \cite{eckhardt-mckenney}. It was shown there that if $G$ is a finitely generated nilpotent group then $C^*(G)$ has finite nuclear dimension. The following question now appears natural:\\

\paragraph{\textbf{Question:}} Let $G$ be a (virtually) polycyclic group. Does $C^*(G)$ have finite nuclear dimension? What about elementary amenable groups with finite Hirsch lengths?\\

In the remainder of the introduction, we sketch the idea of our proof. We note that we cannot directly use the previously known results concerning Rokhlin dimension, since actions with finite Rokhlin dimension are necessarily free. Let us first suppose we are in the other extreme: the homeomorphism $h$ under consideration is periodic, with $h^n = \id$. In this case, although the action does not have finite Rokhlin dimension, one can show directly that the crossed product has finite nuclear dimension (in fact, finite decomposition rank): the crossed product is subhomogeneous, so we only need to find a bound on the dimensions of the spaces of irreducible representations of different dimensions and appeal to \cite{winter-subhomogeneous}. (In this setting, however, one has more information about the structure of the crossed product; this allows us to provide a short and more direct proof, which will also be applicable for actions of groups other than $\Z$). A key fact here is that the bound on $\dimnuc(C_0(X) \rtimes \Z)$ does not depend on the period $n$ of the action, but only on $\dim(X)$. 

Next, let us consider the somewhat more complicated case, in which there are both periodic and non-periodic points, but there is a bound on the length of the orbits: each point is either periodic with period at most $n$, or acted on freely by $h$. In such a case, if we denote by $X_{\mathrm{periodic}}$ the set of all periodic points and by $X_{\mathrm{free}}$ the set of all points on which $h$ acts freely, then $X_{\mathrm{periodic}}$ is a closed invariant set and we have an equivariant extension 
$$
0 \to C_0(X_{\mathrm{free}}) \to C_0(X) \to C_0(X_{\mathrm{periodic}}) \to 0 
.
$$
An extension of Szab\'o's arguments (\cite{szabo}) to the non-compact setting shows that the restriction of $\alpha$ to the ideal $C_0(X_{\mathrm{free}})$ has finite Rokhlin dimension. One can now use the fact that finite nuclear dimension is preserved by extensions. 

Of course, in general there may be no bound on the length of the orbits, and the set of periodic points need not be closed. However, if we fix some $N$, we can consider  the set of points which are periodic with orbit length $\leq N$, which we denote by $X_{\leq N}$, and we let $X_{>N} = X \smallsetminus X_{\leq N}$. Then $X_{\leq N}$ is a closed subset, and again we have an equivariant extension 
$$
0 \to C_0(X_{> N}) \to C_0(X) \to C_0(X_{\leq N}) \to 0
.
$$
As discussed above, we have a bound on $\dimnuc(C_0(X_{\leq N}) \rtimes \Z)$ which does not depend on $N$. As for $X_{> N}$, although in general the restriction of $\alpha$ to $C_0(X_{>N})$ does not have finite Rokhlin dimension, we can still use a refined version of the marker property, detailed in the appendix, to show that $\alpha$ satisfies some fragment of the definition of finite Rokhlin dimension. Recall that to have finite Rokhlin dimension, the automorphism $\alpha$ should admit arbitrarily long Rokhlin towers, each consisting of positive contractions permuted by $\alpha$ to within any given tolerance. Here, we can find Rokhlin towers, provided they are not too long and the error is not too small (compared to $N$). This will be made precise in Lemma \ref{lem:tower-Z}. Finally, in order to construct a decomposable approximation for a given finite subset of $C_0(X) \rtimes_{\alpha} \Z$ to within a specified tolerance, we can choose $N$ to be large enough so as to be able to construct sufficiently long Rokhlin towers with a sufficiently small error, and then apply a localized version of the argument which shows that finite nuclear dimension passes to extensions.

The paper is organized as follows. We begin by fixing notation and listing a few general lemmas. In Section \ref{section:reduction to SH}, we find an upper bound on the nuclear dimension of the crossed product of $C_0(X)$ by a periodic action, using Winter's bound for subhomogeneous algebras. In Section \ref{section: uniformly compact orbits}, we obtain an upper bound via a different approach. Although the second upper bound we obtain is higher than the one we find in Section \ref{section:reduction to SH}, the method works for groups other than $\Z$, which we hope will be useful for future work.  In Section \ref{section: long orbits} we show that there exist Rokhlin towers (of certain length and tolerance) for homeomorphisms whose orbits are all sufficiently long. In the last section, we combine those results to derive our main theorem, Theorem \ref{thm:estimate-dimnuc-Z}. The appendix, by Szab\'o, contains the refinement of the marker property needed in Section \ref{section: long orbits}.

The authors are grateful to G\'abor Szab\'o for providing the said refinement of the marker property method and presenting it in the appendix. The second author would also like to thank Caleb Eckhardt, Stuart White and Joachim Zacharias for pointing out certain applications of our results.

\section{Preliminaries}
\label{section:prelim}

Throughout the paper, we use the following conventions. To simplify formulas, we use the notations $\dimnucone(A) = \dimnuc(A)+1$, $\dim^{+1}(X) = \dim(X)+1$ and $\dr^{+1}(A) = \dr(A)+1$. If $A$ is a $C^*$-algebra, we denote by $A_+$ the positive part, and by $A_{+, \leq 1}$ the set of positive elements of norm at most $1$. If $G$ is a locally compact Hausdorff group and $A$ is a $C^*$-algebra, we denote by $\alpha \colon G \curvearrowright A$ an action, that is, a continuous homomorphism $\alpha \colon G \to \aut(A)$, where $\aut(A)$ is topologized by pointwise convergence. If $G = \Z$, we shall often denote by $\alpha$ both the action and the homomorphism $\alpha_1$ which generates it, when it causes no confusion. 

We are interested here in the case in which $A$ is commutative, that is, $A \cong C_0(X)$ for some locally compact Hausdorff space $X$ (namely the spectrum $\widehat{A}$). By Gel'fand's theorem, an action $\alpha \colon G \curvearrowright C_0(X)$ is completely determined by a continuous action $\calpha \colon G  \curvearrowright X$ on the spectrum, and vice versa. They are related by the identity $\alpha_g (f) = f \circ \calpha_{g^{-1}}$ for any $f \in C_0(X)$ and any $g \in G$. Thus taking a $C^*$-algebraic point of view, we will denote by $\calpha \colon G  \curvearrowright X$ an action on a locally compact Hausdorff space by homeomorphisms, and save the notation $\alpha$ for the corresponding action on $C_0(X)$. In the case of $G = \Z$, we shall also use $\calpha$ to denote the homeomorphism given by the generator $1$. 

If $A = A_0 \oplus A_1 \oplus \ldots \oplus A_d$ is a $C^*$-algebra, and $\varphi^{(k)} \colon A_k \to B$ are order zero contractions into some $C^*$-algebra $B$ for $k=0,1,\ldots,d$, we say that the map $\varphi = \sum_{k=0}^d \varphi^{(k)}$ is a piecewise contractive $(d+1)$-decomposable completely positive map.

The following fact concerning order zero maps is standard and used often in the literature. It follows immediately from the fact that cones over finite dimensional $C^*$-algebras are projective. See \cite[Proposition 1.2.4]{winter-covering-II} and the proof of \cite[Proposition 2.9]{winter-zacharias}. We record it here for further reference.

\begin{Lemma}
\label{Lemma:lifting-decoposable-maps}
Let $A$ be a finite dimensional $C^*$-algebra, let $B$ be a $C^*$-algebra and let $I \lhd B$ be an ideal. Then any piecewise contractive $(d+1)$-decomposable completely positive map $\varphi \colon A \to B/I$ lifts to a piecewise contractive $(d+1)$-decomposable completely positive map $\widetilde{\varphi} \colon A \to B$. \qed
\end{Lemma}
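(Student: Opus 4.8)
The plan is to treat each order zero summand independently and then recombine. By definition, write $A = A_0 \oplus \cdots \oplus A_d$ and $\varphi = \sum_{k=0}^d \varphi^{(k)}$, where each $\varphi^{(k)} \colon A_k \to B/I$ is an order zero contraction. It suffices to lift each $\varphi^{(k)}$ to an order zero contraction $\widetilde{\varphi}^{(k)} \colon A_k \to B$ and then set $\widetilde{\varphi} = \sum_{k=0}^d \widetilde{\varphi}^{(k)}$; by construction this is a piecewise contractive $(d+1)$-decomposable completely positive lift of $\varphi$, so the whole problem reduces to lifting a single order zero contraction.

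To lift one such map, I would invoke the structure theorem for order zero maps from \cite{winter-zacharias}. It furnishes a bijective correspondence between order zero contractions $\psi \colon A_k \to D$ and $*$-homomorphisms $\rho \colon C_0((0,1]) \otimes A_k \to D$ from the cone over $A_k$, characterized by $\psi(a) = \rho(\iota \otimes a)$, where $\iota \in C_0((0,1])$ denotes the canonical generator $\iota(t) = t$. Applying this to $\psi = \varphi^{(k)}$ and $D = B/I$ produces a $*$-homomorphism $\rho_k \colon C_0((0,1]) \otimes A_k \to B/I$.

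Now, since $A_k$ is finite dimensional, the cone $C_0((0,1]) \otimes A_k$ is a projective $C^*$-algebra, which is the cited input of the lemma. Hence, writing $\pi \colon B \to B/I$ for the quotient map, the $*$-homomorphism $\rho_k$ lifts to a $*$-homomorphism $\widetilde{\rho}_k \colon C_0((0,1]) \otimes A_k \to B$ with $\pi \circ \widetilde{\rho}_k = \rho_k$. Transporting back along the structure theorem, the formula $\widetilde{\varphi}^{(k)}(a) = \widetilde{\rho}_k(\iota \otimes a)$ defines an order zero map $A_k \to B$; it is contractive because $\widetilde{\rho}_k$ is a $*$-homomorphism and $\|\iota \otimes a\| = \|a\|$, and it lifts $\varphi^{(k)}$ since $\pi(\widetilde{\varphi}^{(k)}(a)) = \rho_k(\iota \otimes a) = \varphi^{(k)}(a)$.

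All the genuine content sits in the projectivity of cones over finite dimensional $C^*$-algebras, which I would take as the cited fact; everything else is a routine passage through the order zero structure theorem. The only point demanding care is checking that this correspondence respects the relevant norm bound, so that each lift stays a true contraction and the reassembled $\widetilde{\varphi}$ remains piecewise contractive; this is exactly the isometry $\|\iota \otimes a\| = \|a\|$ recorded above. I therefore do not anticipate any serious obstacle beyond correctly invoking the structure theorem in both directions.
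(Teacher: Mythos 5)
Your proof is correct and takes essentially the same approach the paper intends: the paper records this lemma without a written proof, noting only that it follows from the projectivity of cones over finite dimensional $C^*$-algebras (citing Winter's Proposition 1.2.4 and the proof of Proposition 2.9 in Winter--Zacharias), which is precisely the reduction you carry out. Your summand-by-summand lifting via the order zero structure theorem, followed by reassembly, is the standard way of filling in those cited details, and the contractivity check via $\|\iota \otimes a\| = \|a\|$ is exactly the point that keeps the lift piecewise contractive.
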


The following technical lemma is straightforward, and variants of it have been used in the literature. We include a short proof for the reader's convenience.
\begin{Lemma}
 \label{Lemma:finite-dimnuc}
 Let $B$ be a separable and nuclear $C^*$-algebra and $B_0$ a dense subset of the unit ball of $B$. Then $\dimnuc(B) \leq d$ if and only if for any finite subset $F \subseteq B_0$ and for any $\eps>0$ there exists a $C^*$-algebra $A_{\eps} = A_{\eps}^{(0)} \oplus \cdots \oplus A_{\eps}^{(m)}$ and completely positive maps 
 \[
  \xymatrix{
   B \ar[dr]_{\psi = \bigoplus_{l=0}^m \psi^{(l)} \quad } \ar@{.>}[rr]^{\id} &  & B \\
   & A_{\eps} = \bigoplus_{l=0}^m A_{\eps}^{(l)} \ar[ur]_{\quad\varphi = \sum_{l=0}^m \varphi^{(l)}} &
  }
 \]
 so that
 \begin{enumerate}
  \item $\psi$ is contractive,
  \item each $\varphi^{(l)}$ is a sum $\varphi^{(l)} = \sum_{k=0}^{ d^{(l)} } \varphi^{(l,k)}$ of $(d^{(l)} + 1)$-many order zero contractions,
  \item $\|\varphi(\psi(x)) - x\| < \eps$ for all $x \in F$, and
  \item $\displaystyle \sum_{l=0}^m ( \dimnuc(A_{\eps}^{(l)}) + 1) (d^{(l)}+1) \leq d+1$.
 \end{enumerate}
\end{Lemma}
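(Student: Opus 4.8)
The plan is to prove both implications, with the forward (``only if'') direction being a bookkeeping matter and the reverse direction carrying the content. For ``only if'', suppose $\dimnuc(B) \le d$. Given a finite $F \subseteq B_0 \subseteq B$ and $\eps > 0$, the very definition of nuclear dimension supplies a finite-dimensional $A = A^{(0)} \oplus \cdots \oplus A^{(d)}$, a c.p.c. map $\psi$, and a map $\varphi$ whose restriction to each $A^{(l)}$ is an order zero contraction, with $\|\varphi\psi(x) - x\| < \eps$ on $F$. Taking $A_\eps^{(l)} = A^{(l)}$ (so $\dimnuc(A_\eps^{(l)}) = 0$), $m = d$, and $d^{(l)} = 0$ for every $l$, conditions (1)--(3) hold verbatim, and (4) reads $\sum_{l=0}^d (0+1)(0+1) = d+1$. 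So this direction is immediate.

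For ``if'', I first reduce the verification of $\dimnuc(B) \le d$ --- which a priori demands approximations for every finite subset of the unit ball of $B$ --- to subsets of $B_0$. The key point is that any $\varphi$ as in the statement satisfies the uniform bound $\|\varphi\| \le \sum_l (d^{(l)}+1) \le \sum_l (\dimnuc(A_\eps^{(l)})+1)(d^{(l)}+1) \le d+1$, independently of the chosen approximation. Hence, approximating a given finite $F$ (in the unit ball) by a finite set $F' \subseteq B_0$ and invoking this bound together with the contractivity of $\psi$ lets me pass from $B_0$ to arbitrary finite subsets with only a controlled loss in the tolerance. After this reduction I fix $F \subseteq B_0$ and $\eps$, take the data $A_\eps = \bigoplus_l A_\eps^{(l)}$, $\psi = \bigoplus_l \psi^{(l)}$, and $\varphi^{(l)} = \sum_{k=0}^{d^{(l)}} \varphi^{(l,k)}$ as provided. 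Since (4) forces each $n^{(l)} := \dimnuc(A_\eps^{(l)})$ to be finite, I apply the definition of nuclear dimension to each $A_\eps^{(l)}$ with the finite set $\psi^{(l)}(F)$ and a tolerance $\delta$ to be fixed later, obtaining a finite-dimensional $C^{(l)} = \bigoplus_{j=0}^{n^{(l)}} C^{(l,j)}$, a c.p.c. map $\beta^{(l)} \colon A_\eps^{(l)} \to C^{(l)}$, and c.p.c. order zero maps $\gamma^{(l,j)} \colon C^{(l,j)} \to A_\eps^{(l)}$ approximating the identity on $\psi^{(l)}(F)$ to within $\delta$.

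The hard part is organizing these composites into a genuine coloring with at most $d+1$ order zero summands. The naive assignment $C^{(l,j)} \ni c \mapsto \sum_k \varphi^{(l,k)}(\gamma^{(l,j)}(c))$ is a sum of $d^{(l)}+1$ order zero maps and need not be order zero, so I cannot use one color per summand $C^{(l,j)}$. The decisive observation is that the composition of two c.p.c. order zero maps is again c.p.c. order zero: if $a, b$ are orthogonal positive elements then $\gamma^{(l,j)}(a), \gamma^{(l,j)}(b)$ are orthogonal positive, whence $\varphi^{(l,k)}(\gamma^{(l,j)}(a))$ and $\varphi^{(l,k)}(\gamma^{(l,j)}(b))$ are orthogonal, while complete positivity and contractivity are clear. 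I therefore take as approximating algebra the direct sum of one copy $C^{(l,j,k)}$ of $C^{(l,j)}$ for each triple $(l,j,k)$, define the down map by duplicating $\beta^{(l,j)}\psi^{(l)}$ over the index $k$, and assign to $C^{(l,j,k)}$ the single order zero map $\varphi^{(l,k)} \circ \gamma^{(l,j)}$. The number of colors is exactly $\sum_l (n^{(l)}+1)(d^{(l)}+1) \le d+1$ by (4), padding with zero maps to reach $d+1$ if necessary. Finally, the resulting composite sends $x$ to $\sum_l \varphi^{(l)}\!\big(\sum_j \gamma^{(l,j)}\beta^{(l)}(\psi^{(l)}(x))\big)$, which differs from $\varphi(\psi(x)) = \sum_l \varphi^{(l)}(\psi^{(l)}(x))$ by at most $\sum_l (d^{(l)}+1)\delta \le (d+1)\delta$ (using $\|\varphi^{(l)}\| \le d^{(l)}+1$ and the control on $\psi^{(l)}(F)$), and $\varphi(\psi(x))$ is within the given tolerance of $x$; choosing $\delta$ small enough then yields $\dimnuc(B) \le d$. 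I expect the composition fact for order zero maps, together with the duplication-of-summands trick that converts the budget $(n^{(l)}+1)(d^{(l)}+1)$ into honest colors, to be the only genuinely nontrivial ingredient; the separability and nuclearity hypotheses on $B$ are used only lightly, to make sense of the dense set $B_0$ and to guarantee that the pieces admit finite-dimensional approximations.
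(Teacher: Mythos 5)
Your proof is correct and follows essentially the same route as the paper: the forward direction is immediate, and the converse reduces to $F \subseteq B_0$ by a perturbation (justified by the uniform bound $\|\varphi\| \leq d+1$), then composes the given approximation with $\dimnucone(A_{\eps}^{(l)})$-decomposable approximations of each $\psi^{(l)}(F)$ inside $A_{\eps}^{(l)}$. The only difference is one of exposition: the paper compresses your two key observations --- that composites of c.p.c.\ order zero maps are c.p.c.\ order zero, and that re-indexing the summands over triples $(l,j,k)$ gives exactly the color count $\sum_l (\dimnuc(A_{\eps}^{(l)})+1)(d^{(l)}+1) \leq d+1$ demanded by condition (4) --- into the phrase ``one now checks,'' whereas you make them explicit.
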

\begin{proof}
The forward implication is immediate from the definition of nuclear dimension. For the converse, let $F \subset B$ be a finite set, and fix $\eps>0$. We wish to find a piecewise contractive $(d+1)$-decomposable completely positive approximation for $F$ through a finite dimensional $C^*$-algebra. Since $B_0$ is dense in the unit ball of $B$, we may assume that $F \subset B_0$, by applying a rescaling and a small perturbation if needed. Let $A_{\eps}$, $\psi$ and $\varphi$ be as in the statement. Set $\eps' = \max\{\|\varphi(\psi(x)) - x\|
\mid x \in F\}$, and note that $\eps'<\eps$. For each $l = 0,1,\ldots,m$, pick a piecewise contractive $\dimnucone(A_{\eps}^{(l)})$-decomposable approximation for $\psi^{(l)}(F)$ to within $(\eps-\eps')/(m+1)$, 
 \[
  \xymatrix{
   A_{\eps}^{(l)} \ar[dr]_{\sigma^{(l)} \quad } \ar@{.>}[rr]^{\id} &  & A_{\eps}^{(l)} \\
   & E^{(l)} \ar[ur]_{\quad\eta^{(l)}}  &
   } 
 \]
 One now checks that
  \[
   \xymatrix{
    B \ar[dr]_{ \bigoplus_{l=0}^m \sigma^{(l)} \circ \psi^{(l)} \quad } \ar@{.>}[rr]^{\id} &  & B \\
    & \bigoplus_{l=0}^m E^{(l)} \ar[ur]_{\quad \sum_{l=0}^m \varphi^{(l)} \circ \eta^{(l)}}  &
   }
  \]
  is a decomposable approximation, as required.
\end{proof}

 The following lemma is an invariant version of \cite[Proposition 2.6]{winter-zacharias}. The modification is straightforward as well, but we include a proof for the reader's convenience.

\begin{lem}\label{lem:separable-dimnuc}
 Let $G$ be a locally compact Hausdorff and second countable group, and let $A$ be a $G$-$C^*$-algebra. Then any countable subset $S \subset A$ is contained in a $G$-invariant separable $C^*$-subalgebra $B \subset A$ with $\dimnuc(B) \leq \dimnuc(A)$. In particular, $A$ can be written as a direct limit of separable $G$-$C^*$-algebras with nuclear dimension no more than $\dimnuc(A)$.
\end{lem}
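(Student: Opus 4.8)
The plan is to run the standard ``separable inductive limit'' (L\"owenheim--Skolem type) construction that proves the non-equivariant statement \cite[Proposition 2.6]{winter-zacharias}, augmenting the list of closure conditions so as to simultaneously force $G$-invariance. Write $d = \dimnuc(A)$; we may assume $d < \infty$. Since $G$ is second countable, fix a countable dense subset $G_0 \subseteq G$. The key observation is that, because the action is continuous in the point-norm topology, closing a subalgebra under the \emph{countably many} automorphisms $\{\alpha_g : g \in G_0\}$ and passing to the norm closure already yields a $G$-invariant algebra: if $b$ lies in a norm-closed subalgebra $B$ that is invariant under each $\alpha_g$ with $g \in G_0$, then for arbitrary $g \in G$ we pick $g_n \to g$ in $G_0$ and obtain $\alpha_g(b) = \lim_n \alpha_{g_n}(b) \in B$.

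First I would build an increasing sequence $B_0 \subseteq B_1 \subseteq \cdots$ of separable $C^*$-subalgebras of $A$. Start with $B_0 = C^*(S)$. Given $B_n$ with a fixed countable dense subset $D_n \subseteq B_n$, I form $B_{n+1}$ by adjoining two kinds of elements and taking the generated $C^*$-algebra: (a) $\{\alpha_g(d) : g \in G_0,\ d \in D_n\}$, to push towards invariance; and (b) for each finite tuple $F' \subseteq D_n$ and each $k \in \N$, the finitely many elements $\varphi(e_{ij})$, where $(\mathcal{F}, \psi, \varphi)$ is a chosen piecewise contractive $(d+1)$-decomposable completely positive approximation of $F'$ through a finite-dimensional algebra $\mathcal{F}$ with error $< 1/k$ (which exists since $\dimnuc(A) = d$), and $\{e_{ij}\}$ is a fixed finite generating system of matrix units of $\mathcal{F}$; this records a witness whose range map $\varphi$ now takes values in $B_{n+1}$. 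Each $B_{n+1}$ is generated by countably many elements, hence separable. Setting $B = \overline{\bigcup_n B_n}$ gives a separable subalgebra containing $S$.

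$G$-invariance of $B$ follows from the observation above: by construction $\alpha_g(D_n) \subseteq B_{n+1}$ for $g \in G_0$, so by isometry of $\alpha_g$ we get $\alpha_g(B) \subseteq B$ for every $g \in G_0$, and the continuity argument then upgrades this to all $g \in G$. For the dimension bound, let $F \subseteq B$ be finite and $\eps > 0$. Choosing $n$ large and $F' \subseteq D_n$ with entries within $\eps/(3(d+1))$ of those of $F$, and taking the recorded witness $(\mathcal{F}, \psi, \varphi)$ for $F'$ with $1/k < \eps/3$, I note that $\psi$ restricts to a contractive completely positive map $B \to \mathcal{F}$, while $\varphi$ corestricts to $B$ since its range lies in $\mathrm{span}\{\varphi(e_{ij})\} \subseteq B_{n+1} \subseteq B$; each order zero contractive summand $\varphi^{(k)}$ likewise corestricts to $B$ with its properties intact. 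As $\|\varphi\psi\| \leq d+1$, a routine $\eps/3$ estimate yields $\|\varphi\psi(x) - x\| < \eps$ for all $x \in F$, exhibiting a piecewise contractive $(d+1)$-decomposable approximation of $F$ inside $B$, whence $\dimnuc(B) \leq d$.

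Finally, the ``in particular'' clause is immediate: the family of separable $G$-invariant subalgebras with nuclear dimension $\leq \dimnuc(A)$ is upward directed (given two of them, apply the construction to a countable dense subset of their union), and its union is all of $A$, realizing $A = \varinjlim B$ over this family. The only step requiring care beyond the non-equivariant argument is the interplay between the two closure demands---ranges of the approximating maps and the $G_0$-orbits of elements---together with the passage from the countable set $G_0$ to all of $G$; this is precisely where the second countability of $G$ and the point-norm continuity of $\alpha$ are used, and I expect it to be the main (though routine) obstacle.
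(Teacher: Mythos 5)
Your proposal is correct and is essentially the same argument as the paper's: an increasing sequence of separable subalgebras that alternately adjoins witnesses of $(\dimnuc(A)+1)$-decomposable approximations and closes under the group action, with second countability of $G$ guaranteeing separability. The only cosmetic difference is that the paper closes each stage under the full $G$-action (using second countability to keep the generated $G$-subalgebra separable), whereas you close under a countable dense subset $G_0 \subset G$ and upgrade to full $G$-invariance at the end via point-norm continuity; these are interchangeable implementations of the same idea.
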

\begin{proof}
We define an increasing sequence of separable $G$-invariant $C^*$-subalgebras of $A$ as follows. Let $B_0$ be the $G$-$C^*$-subalgebra of $A$ generated by $S$. Now, suppose $B_n$ has been defined. We pick a countable dense sequence $x_1,x_2,\ldots$ in $B_n$. For any $k$, pick a piecewise contractive $\dimnucone(A)$-decomposable approximation 
 \[
  \xymatrix{
   A \ar[dr]_{\sigma_k \quad } \ar@{.>}[rr]^{\id} &  & A \\
   & E_k \ar[ur]_{\quad\eta_k = \sum_{j=0}^{\dimnuc(A)} \eta_k^{(j)}} &
   }  
 \]
 for $\{x_1,x_2,\ldots,x_k\}$ to within tolerance $\frac{1}{k}$. We set $B_{n+1} \subset A$ to be the $G$-$C^*$-subalgebra generated by $B_n$ and the images of $\eta_k^{(j)}$ for all applicable $k$ and $j$. Since each $E_k$ is finite dimensional, and $G$ is second countable, the algebra $B_{n+1}$ is separable. Furthermore, by construction, for any finite subset $F \subset B_n$ and any $\eps>0$, we may choose $k$ large enough so that the diagram
 \[
  \xymatrix{
   A \ar[dr]_{\sigma_k \quad } \ar@{.>}[rr]^{\id} &  & A \\
   & E_k \ar[ur]_{\quad\eta_k = \sum_{j=0}^{\dimnuc(A)} \eta_k^{(j)}}  &
  } 
 \]
 gives a piecewise contractive $\dimnucone(A)$-decomposable approximation for $(F, \eps)$ whose image lies in $B_{n+1}$. We now define $B = \overline{\bigcup_{n=0}^{\infty}B_n} \subset A$. This is a separable $G$-$C^*$-subalgebra. We claim that $\dimnuc(B) \leq \dimnuc(A)$. Indeed, for any finite set $F \subset \bigcup_{n=0}^{\infty}B_n$ (which is dense in $B$) and any $\eps>0$, there is a  piecewise contractive $\dimnucone(A)$-decomposable approximation for $(F, \eps)$ whose image lies in $B$. Restricting the domain and co-domain to $B$ gives us the required approximation.
 \end{proof}

\begin{lem}\label{lem:quasicentral-approximate-unit}
 Let $X$ be a locally compact Hausdorff space, let $G$ be a locally compact Hausdorff group, and let $\calpha: G \curvearrowright X$ be a continuous action. Suppose $U$ is a $G$-invariant open subset of $X$. Then there is a quasicentral approximate unit for $C_0(U) + C_0(U) \rtimes_\alpha G \subset M(C_0(X) \rtimes_\alpha G)$ which is contained in $C_c(U)_{+, \leq 1}$.
\end{lem}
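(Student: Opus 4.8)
The plan is to reduce the statement to the construction of an \emph{asymptotically invariant} approximate unit for $C_0(U)$, and then to verify that any such net automatically has all the required properties. First I would observe that $B := C_0(U) + C_0(U) \rtimes_\alpha G$ is a (two-sided, closed) ideal of $C := C_0(X) + C_0(X) \rtimes_\alpha G \subseteq M(C_0(X) \rtimes_\alpha G)$: this follows from the fact that $C_0(U)$ is a $G$-invariant ideal of $C_0(X)$, together with the multiplier formulas $(f \cdot c)(g) = f\, c(g)$ and $(c \cdot f)(g) = c(g)\, \alpha_g(f)$, valid for $f \in C_0(X)$ and $c \in C_c(G, C_0(X))$. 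Thus a quasicentral approximate unit for $B$ amounts to an approximate unit of $B$ that is asymptotically central in $C$; and since any $e \in C_0(U) \subseteq C_0(X)$ commutes \emph{exactly} with the commutative algebra $C_0(X)$, the content reduces to asymptotic commutation with $C_0(X) \rtimes_\alpha G$. It therefore suffices to produce a net $(e_\lambda)$ in $C_c(U)_{+,\leq 1}$ which (i) is an approximate unit for $C_0(U)$, i.e.\ $e_\lambda \to 1$ uniformly on compact subsets of $U$, and (ii) is asymptotically invariant, i.e.\ $\sup_{g \in K} \|\alpha_g(e_\lambda) - e_\lambda\| \to 0$ for every compact $K \subseteq G$.

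Granting (i) and (ii), I would verify quasicentrality on the dense convolution subalgebra $C_c(G, C_0(X))$ of the full crossed product $C_0(X)\rtimes_\alpha G$ (not merely $C_c(G,C_0(U))$). Using the formulas above, for $c \in C_c(G, C_0(X))$ one computes $\big([e_\lambda, c]\big)(g) = c(g)\big(e_\lambda - \alpha_g(e_\lambda)\big)$, whence $\|[e_\lambda, c]\| \leq \|c\|_1 \cdot \sup_{g \in \supp c} \|e_\lambda - \alpha_g(e_\lambda)\| \to 0$ by (ii); asymptotic centrality in $C$ then follows by density together with the uniform bound $\|e_\lambda\| \leq 1$. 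That $(e_\lambda)$ is a left approximate unit for $C_0(U) \rtimes_\alpha G$ follows from (i) and the tightness of the compact set $\{a(g) : g \in \supp a\} \subseteq C_0(U)$ for $a \in C_c(G, C_0(U))$; the right approximate unit property is then free from asymptotic centrality. Together with (i) for the $C_0(U)$-summand this shows $(e_\lambda)$ is an approximate unit for $B$.

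The substance of the proof is the construction realizing (i) and (ii), which I would carry out for each triple $(L, K, \eps)$ consisting of a compact $L \subseteq U$, a compact $K \subseteq G$ and $\eps > 0$, and then index the net by these triples, ordered by inclusion of $L$ and of $K$ and reverse inclusion of $\eps$. Set $K' = K \cup K^{-1} \cup \{e\}$, a symmetric compact set containing the unit. The fact that drives everything is that, since $U$ is $G$-invariant and $G \times X \to X$ is continuous, the set $\calpha_{K'}(Y) = \{\calpha_g(y) : g \in K', y \in Y\}$ is a compact subset of $U$ containing $Y$ whenever $Y \subseteq U$ is compact. Using this with local compactness of $X$, I would inductively build nested open sets $W_0 \subseteq W_1 \subseteq \cdots \subseteq W_{m+2}$ with compact closures in $U$ such that $L \subseteq W_0$, $\overline{W_n} \subseteq W_{n+1}$ and, crucially, $\calpha_{K'}(\overline{W_n}) \subseteq W_{n+1}$; the last condition is arranged at each step by choosing $W_{n+1}$ to be a relatively compact open neighborhood, inside $U$, of the compact set $\calpha_{K'}(\overline{W_n})$. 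Taking Urysohn functions $\theta_n \in C_c(U)_{+,\leq 1}$ with $\theta_n = 1$ on $\overline{W_n}$ and $\supp \theta_n \subseteq W_{n+1}$, I would set $e = \tfrac{1}{m}\sum_{n=0}^{m-1} \theta_n$ with $m > 2/\eps$. Then $e \in C_c(U)_{+,\leq 1}$ and $e = 1$ on $L$, giving (i).

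For (ii), the translate-nesting property yields, for every $g \in K'$, the pointwise sandwich $\theta_{n-2} \leq \alpha_g(\theta_n) \leq \theta_{n+2}$ (with $\theta_k = 0$ for $k < 0$): indeed $\alpha_g(\theta_n)(x) = \theta_n(\calpha_{g^{-1}} x)$ is supported in $\overline{W_{n+2}}$, where $\theta_{n+2} = 1$, and it equals $1$ wherever $\theta_{n-2} > 0$. Averaging and telescoping then give $-\tfrac{2}{m} \leq \alpha_g(e) - e \leq \tfrac{2}{m}$ pointwise, so $\sup_{g \in K} \|\alpha_g(e) - e\| \leq 2/m < \eps$, which is (ii). The main obstacle is precisely this construction of a continuous, slowly varying cutoff: the naive candidate built from the ``number of $K'$-steps needed to reach $L$'' is only semicontinuous, and the purpose of the translate-nested neighborhoods $W_n$ together with the averaging of Urysohn functions is exactly to manufacture a genuinely continuous function whose oscillation under the $K$-action is controlled by $1/m$. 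The $G$-invariance of $U$, which keeps the finite $K'$-orbits inside $U$, and the compactness of those orbits are what make the inductive construction possible.
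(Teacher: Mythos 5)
Your proof is correct, but it takes a genuinely different route from the paper's. The paper's argument is soft: it only constructs a plain approximate unit inside $C_c(U)_{+,\leq 1}$ --- observing that a positive contraction equal to $1$ on $S \cdot K$ (with $S$ a symmetric compact neighborhood of $1_G$) acts as an \emph{exact} unit on any $f \in C_c(G, C_c(U))$ with $f$ supported in $S$ and each $f(g)$ supported in $K$ --- and then invokes the remark after Arveson's quasicentrality theorem, which produces a quasicentral approximate unit inside the \emph{convex hull} of the given one; the only point requiring care there is that the convex hull of $C_c(U)_{+,\leq 1}$ stays in $C_c(U)_{+,\leq 1}$, so the support condition survives the abstract upgrade. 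You instead build the quasicentral approximate unit by hand, as an asymptotically $\alpha$-invariant approximate unit: the nested $K'$-translate-absorbing neighborhoods $W_n$ with $\calpha_{K'}(\overline{W_n}) \subseteq W_{n+1}$, the average $e = \frac{1}{m}\sum_{n=0}^{m-1}\theta_n$, the sandwich $\theta_{n-2} \leq \alpha_g(\theta_n) \leq \theta_{n+2}$ for $g \in K'$, and telescoping give $\sup_{g \in K}\|\alpha_g(e) - e\| \leq 2/m$; your commutator identity $[e,c](g) = c(g)\left(e - \alpha_g(e)\right)$ for $c \in C_c(G, C_0(X))$ (valid because $C_0(X)$ is commutative and $(ce)(g) = c(g)\alpha_g(e)$) then converts asymptotic invariance into quasicentrality via the $L^1$-norm estimate, and the dense-subalgebra and approximate-unit verifications are all sound. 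One remark on phrasing: your averaging is over the chain index $n$, not over the group, so despite the ``F{\o}lner'' language no amenability of $G$ is used --- as it must be, since the lemma holds for arbitrary locally compact $G$ (and the $L^1$-estimate dominates the full crossed-product norm in any case). As for what each approach buys: the paper's proof is a few lines and defers all the work to Arveson's theorem; yours is self-contained and quantitative, and in fact delivers a strictly stronger conclusion --- exact commutation with $C_0(X)$ together with asymptotic $\alpha$-invariance uniformly over compact subsets of $G$ --- which is precisely the kind of quasi-invariant cutoff that is useful in crossed-product arguments beyond this lemma, whereas Arveson's convex-combination trick gives quasicentrality with no invariance information about the individual elements.
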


\begin{proof}
 Let $K$ be a compact subset of $U$, and let $S$ be a symmetric compact neighborhood of the identity in $G$. Then $S\cdot K$ is also a compact subset of $U$. If $e \in C_c(U)$ is a positive contraction which is identically $1$ on $S\cdot K$, then for any function $f \in C_c(G,C_c(U)) \subseteq A \rtimes_{\alpha} G$ such that $f$ is supported in $S$ and $f(g)$ is supported in $K$ for each $g \in S$, we have $fe = ef = f$. Therefore, for any finite subset $F$ of $C_c(G,C_c(U))$ there exists a positive contraction $e_F \in C_c(U)$ so that $e_F$ acts as the identity on $F$.  Since $C_c(G,C_c(U))$ is dense in $A \rtimes_{\alpha} G$, it follows that there exists an approximate identity for $C_0(U) + C_0(U) \rtimes_\alpha G$ whose elements are all in $C_c(U)_{+, \leq 1}$. By the remark after \cite[Theorem 1]{arveson}, it follows that there exists a quasicentral approximate unit for $C_0(U) + C_0(U) \rtimes_\alpha G \subset M(C_0(X) \rtimes_\alpha G)$ in the convex hull of the elements in $C_c(U)$ described above, and in particular it is contained in $C_c(U)_{+, \leq 1}$, as required.
\end{proof}

We record the following two results from classical dimension theory, which are used later in the paper.
Those two results apply to the case of metrizable spaces, since any metrizable space is paracompact, Hausdorff and totally normal. For a discussion of different variants of paracompactness and normality, we refer the reader to \cite[Chapter 1, section 4]{Pears75}.

\begin{thm}[{\cite[Chapter 3, Theorem 6.4]{Pears75}}]\label{thm:Pears75}
 If $M$ is a subspace of a totally normal space $X$, then $\dim (M) \leq \dim (X)$.
\end{thm}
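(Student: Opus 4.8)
The plan is to use the finite open cover characterization of covering dimension: $\dim X \le n$ means that every finite open cover of $X$ admits a finite open refinement of order at most $n+1$ (no point lying in more than $n+1$ members). The first and conceptually easiest step is to reduce the case of an arbitrary subspace $M$ to that of an \emph{open} subspace. Given a finite open cover $\{V_1,\dots,V_k\}$ of $M$, I would write $V_i = W_i \cap M$ with $W_i$ open in $X$ and set $U = \bigcup_{i=1}^k W_i$, an open subset of $X$ containing $M$. Then $\{W_1,\dots,W_k\}$ is a finite open cover of $U$; if we knew $\dim U \le \dim X = n$, we could refine it to an open cover of $U$ of order $\le n+1$ and intersect each member with $M$ to obtain the required refinement of $\{V_i\}$ (the intersection preserves the order bound and still covers $M \subseteq U$). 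Thus it suffices to prove the inequality for open subspaces.

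For an open $U \subseteq X$, the role of total normality becomes decisive. By definition of total normality applied to $U$, we may write $U = \bigcup_\lambda G_\lambda$ where $\{G_\lambda\}$ is locally finite in $U$ and each $G_\lambda$ is an open $F_\sigma$-subset of $X$. Writing $G_\lambda = \bigcup_{m} F_{\lambda,m}$ with each $F_{\lambda,m}$ closed in $X$ (hence closed in $U$ and in $G_\lambda$), the closed subspace theorem gives $\dim F_{\lambda,m} \le \dim X = n$, and the countable closed sum theorem then yields $\dim G_\lambda \le n$. Finally, since $\{G_\lambda\}$ is a locally finite open cover of the normal space $U$, I would shrink it to a locally finite cover $\{C_\lambda\}$ by sets closed in $U$ with $C_\lambda \subseteq G_\lambda$, so that $\dim C_\lambda \le \dim G_\lambda \le n$, and then apply the locally finite closed sum theorem to conclude $\dim U \le n$. (Alternatively one can bypass the shrinking step by noting that $\{F_{\lambda,m}\}_{\lambda,m}$ is a $\sigma$-locally-finite closed cover of $U$ and invoking the corresponding sum theorem directly.)

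This argument rests on four classical facts from dimension theory, established earlier in Pears' development, which I would simply cite: monotonicity of $\dim$ under passage to closed subspaces of normal spaces, the countable closed sum theorem, the locally finite closed sum theorem, and the heredity of total normality (needed so that the subspaces $U$ and $G_\lambda$ are themselves totally normal, hence normal enough for the sum theorems to apply). The hard part is genuinely the open-subspace case: subspace monotonicity is \emph{false} for arbitrary normal spaces, and total normality is precisely the hypothesis engineered so that an open set can be reassembled, in a locally finite manner, out of closed pieces of the ambient space. Once the decomposition of open sets into locally finite families of open $F_\sigma$-sets is in hand, the sum theorems do the remaining bookkeeping, and the finite-cover reduction of the first step transfers the conclusion from open subspaces to all subspaces.
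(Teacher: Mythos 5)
The paper does not prove this statement at all --- it is quoted verbatim from Pears' book as a known result from classical dimension theory --- and your argument is precisely the standard Dowker--Pears proof of that cited theorem: reduce to open subspaces via the finite-cover/refinement definition, decompose an open subspace by total normality into a family of open $F_\sigma$-sets of $X$ that is locally finite in the subspace, bound each piece by the closed-subspace and countable closed sum theorems, and finish with a closed shrinking and the locally finite closed sum theorem (with hereditary normality supplying the normality needed for the sum theorems). The proposal is correct and matches the approach of the cited source.
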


\begin{prop}[{\cite[Chapter 9, Proposition 2.16]{Pears75}}] \label{prop:Pears75}
 If $X$ and $Y$ are weakly paracompact normal Hausdorff spaces and $f\colon X \to Y$ is a continuous open surjection such that $f^{-1}(y)$ is finite for each point of $Y$, then $\dim(X) = \dim (Y) $. \qed
\end{prop}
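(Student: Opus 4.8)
The plan is to prove that the two covering dimensions agree by decomposing $X$ and $Y$ according to the cardinality of the fibers of $f$, and showing that $f$ is a local homeomorphism over each piece, so that the dimensions match stratum by stratum.

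First I would record that, because $f$ is open and $X$ is Hausdorff, the fiber-counting function $y \mapsto |f^{-1}(y)|$ is lower semicontinuous. Indeed, if $f^{-1}(y) \supseteq \{x_1,\dots,x_k\}$ with the $x_i$ distinct, choose disjoint open sets $V_i \ni x_i$ and set $W = \bigcap_{i=1}^k f(V_i)$; this is open (as $f$ is open) and contains $y$, and every $y' \in W$ has at least one preimage in each $V_i$, hence $|f^{-1}(y')| \ge k$. Consequently $Y_{\ge k} := \{ y : |f^{-1}(y)| \ge k\}$ is open, and since the fibers are finite the strata $Y_k := Y_{\ge k} \setminus Y_{\ge k+1}$ are locally closed and partition $Y$; write $X_k := f^{-1}(Y_k)$ for the corresponding partition of $X$.

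Next I would establish the key local fact: for each $k$, the restriction $f \colon X_k \to Y_k$ is a $k$-sheeted covering map, in particular a local homeomorphism. With $V_1,\dots,V_k$ and $W$ as above for a point $y \in Y_k$, every $y' \in W \cap Y_k$ has exactly $k$ preimages, one in each $V_i$ (at least $k$ by the semicontinuity argument, and at most $k$ since $y' \in Y_k$); thus each $f|_{V_i \cap X_k} \colon V_i \cap X_k \to W \cap Y_k$ is a continuous open bijection, i.e.\ a homeomorphism. Since a finite-sheeted covering preserves covering dimension (here using the good behavior of $\dim$ guaranteed by the standing hypotheses), this yields $\dim X_k = \dim Y_k$ for every $k$.

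Finally I would assemble these equalities. Filtering $Y$ by the closed sets $Y_{\le k} := Y \setminus Y_{\ge k+1}$, one has inside each $Y_{\le k}$ the open top stratum $Y_k$ and the closed remainder $Y_{\le k-1}$, and similarly upstairs; I would induct on $k$, at each stage combining $\dim X_k = \dim Y_k$ with a sum theorem and the monotonicity of $\dim$ under passage to the relevant subspaces to obtain $\dim X_{\le k} = \dim Y_{\le k}$, and then pass to the (possibly countable) union. This last step is the main obstacle: outside the metrizable world, covering dimension behaves badly under taking subspaces and countable unions, and it is precisely here that normality and weak paracompactness are used, to license the subset theorem and the (countable) sum theorem for covering dimension in this generality (cf.\ \cite{Pears75}; compare Theorem \ref{thm:Pears75}). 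Care is also needed because the fibers, though finite at each point, need not be uniformly bounded, so infinitely many strata may occur and a genuinely countable sum theorem is required.
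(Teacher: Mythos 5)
First, a remark on the comparison itself: the paper does not prove Proposition \ref{prop:Pears75} at all --- it is quoted from Pears' book with a \qed, so there is no internal argument to measure your proposal against. What follows is therefore an assessment of your sketch on its own terms.

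Your skeleton (stratify $Y$ by fiber cardinality, show $f$ is a $k$-fold covering over each stratum $Y_k$, reassemble by sum theorems) is the natural classical strategy, and the lower-semicontinuity and covering-map steps are essentially correct (modulo the small slip that the homeomorphism is $f|_{V_i \cap f^{-1}(W \cap Y_k)}$ onto $W \cap Y_k$, not $f|_{V_i \cap X_k}$). The genuine gap lies in the two places where you appeal to ``the good behavior of $\dim$ guaranteed by the standing hypotheses.'' (a) The strata $X_k$, $Y_k$ are only \emph{locally closed} subspaces of $X$, $Y$, and neither normality nor weak paracompactness is inherited by locally closed subspaces; the assertion that a finite-sheeted covering preserves covering dimension is a local-to-global statement which needs $\mathrm{loc}\dim = \dim$ on the spaces involved --- a theorem available for weakly paracompact normal spaces, but you have not shown the strata have these properties, and in general they need not. (b) More seriously, the subset theorem you invoke is not licensed by the hypotheses: monotonicity of $\dim$ under passage to arbitrary (or locally closed) subspaces requires \emph{total} normality --- this is exactly the paper's Theorem \ref{thm:Pears75}, which you cite --- whereas Proposition \ref{prop:Pears75} assumes only normality plus weak paracompactness, and $\dim$ can fail to be monotone for plain normal spaces; only monotonicity for \emph{closed} subspaces comes for free. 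Since your strata are not closed, both the stratum-by-stratum comparison $\dim X_k = \dim Y_k$ and the inductive assembly rest on unjustified monotonicity; likewise, the countable closed sum theorem (valid in normal spaces) sees only the closed sets $Y_{\le k}$, and handling the open top stratum inside each $Y_{\le k}$ requires a decomposition theorem that, in this generality, must be formulated via closed-in-$Y_{\le k}$ subsets of the stratum rather than via $\dim Y_k$ itself. In the metrizable or totally normal setting your argument can be pushed through; under merely ``weakly paracompact normal Hausdorff,'' circumventing these failures is precisely the hard content of Pears' proposition, and that content is missing from the sketch.
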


\section{Crossed products by a periodic automorphism} 
\label{section:reduction to SH}

We identify the crossed product of $C_0(X)$ by a periodic action as a subhomogeneous algebra, and use Winter's method to provide an upper bound on its decomposition rank (and thus on its nuclear dimension, too).

\begin{Prop} 
\label{Prop:periodic points}
Suppose $Y$ is a locally compact metrizable space of finite covering dimension. Let $\alpha \colon C_0(Y) \to C_0(Y)$ be a periodic automorphism, that is, $\alpha^n = \id$ for some positive integer $n$. Then 
  $C_0(Y) \rtimes_{\alpha} \Z$ is subhomogeneous
 and 
 $$
 \dr(C_0(Y) \rtimes_{\alpha} \Z) \leq \dim(Y)+1 
 .
 $$
\end{Prop}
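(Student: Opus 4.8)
The plan is to prove this in two stages: first identify the crossed product concretely as a subhomogeneous algebra by analyzing its irreducible representations via the periodic orbit structure, and then apply Winter's dimension estimate for subhomogeneous $C^*$-algebras. Since $\alpha^n = \id$, the action of $\Z$ factors through $\Z/n\Z$, and the corresponding homeomorphism $\calpha$ on $Y$ has all orbits of length dividing $n$. First I would stratify $Y$ according to the exact period of each point: for each divisor $d \mid n$, let $Y_d$ denote the set of points whose orbit under $\calpha$ has cardinality exactly $d$. Each $Y_d$ is a $\calpha$-invariant, locally closed subset, and $\bigsqcup_{d \mid n} Y_d = Y$. By Theorem \ref{thm:Pears75}, each $\dim(Y_d) \leq \dim(Y)$.

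Next I would describe the irreducible representations. A standard Mackey-machine / Takai-type analysis shows that the irreducible representations of $C_0(Y) \rtimes_\alpha \Z$ are parametrized by pairs consisting of a $\calpha$-orbit together with a character of the stabilizer. A point $y \in Y_d$ has stabilizer $d\Z \subseteq \Z$ in the sense that $\calpha^d(y) = y$, and the induced representations attached to its orbit are $d$-dimensional, obtained by inducing a character $\lambda \in \T$ of $d\Z/n\Z$ (equivalently, a choice of $d$-th root determining the eigenvalue of the implemented unitary). Concretely, over each orbit of size $d$ one gets a $d \times d$ matrix representation, and as one varies the character parameter in $\T$ and the orbit in $Y_d / \calpha$, these fill out a homogeneous block. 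This exhibits $C_0(Y) \rtimes_\alpha \Z$ as subhomogeneous, with the maximal dimension of an irreducible representation equal to $n$, and identifies the space of $d$-dimensional irreducible representations as (a subset built from) the orbit space $Y_d/\calpha$ crossed with the dual torus parameter.

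Then I would invoke Winter's bound. The relevant result (\cite{winter-subhomogeneous}) bounds $\dr$ of a subhomogeneous algebra in terms of the covering dimensions of the spaces $\widehat{A}_k$ of $k$-dimensional irreducible representations. The key point, emphasized in the introduction, is that the bound must be independent of $n$; this forces me to estimate $\dim$ of each stratum's representation space uniformly by $\dim(Y)$ rather than summing contributions over divisors. For the $d$-dimensional stratum, the representation space fibers over $Y_d/\calpha$ with a one-dimensional torus parameter, so naively its dimension is at most $\dim(Y_d/\calpha) + 1 \leq \dim(Y) + 1$ using Proposition \ref{prop:Pears75} to pass between $Y_d$ and its orbit space (the quotient map being a continuous open surjection with finite, size-$d$, fibers) together with Theorem \ref{thm:Pears75}. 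The target bound $\dim(Y)+1$ strongly suggests that Winter's estimate in this homogeneous-block situation yields exactly $\dim$ of the base plus a correction that the structure here keeps controlled, rather than accumulating across strata.

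The main obstacle I expect is twofold. First, making the parametrization of irreducible representations rigorous and topologically correct — in particular showing the spaces of $k$-dimensional representations carry the right (locally closed) structure and computing their covering dimension uniformly, handling the torus parameter carefully so the estimate does not blow up with $n$. Second, and more delicate, is extracting precisely the clean bound $\dim(Y)+1$ from Winter's subhomogeneous estimate: a direct application typically produces a bound that grows with the number of homogeneous subquotients (here the number of divisors of $n$), so the real work is to organize the decomposable approximation so that the contributions from different strata do not add up, presumably by exploiting that distinct strata are separated (the $Y_d$ being mutually disjoint and the period being a locally constant invariant on each) and that $\T$ has covering dimension $1$. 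I would therefore lean on the structural fact that the whole algebra is a continuous-trace-like assembly over a base of dimension $\dim(Y)$, applying Winter's machinery to get $\dr \le \dim(Y)+1$ with an $n$-independent constant.
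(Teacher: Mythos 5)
Your proposal follows essentially the same route as the paper's proof: stratify $Y$ by exact orbit length $Y_k$, identify the space of $k$-dimensional irreducible representations of $C_0(Y)\rtimes_\alpha\Z$ with $(Y_k/\Z)\times\T$, bound its covering dimension by $\dim(Y)+1$ via Theorem \ref{thm:Pears75} and Proposition \ref{prop:Pears75}, and then invoke Winter's subhomogeneous theorem (the paper just establishes subhomogeneity by a concrete embedding into $M_n(C_0(Y)\otimes C(\T))$ rather than by a Mackey-type analysis, but the parametrization of irreducibles is the same). The ``delicate'' obstacle you anticipate in your last paragraph does not actually arise: the main theorem of \cite{winter-subhomogeneous} gives $\dr(A)=\max_k \dim(\mathrm{Prim}_k(A))$ for separable subhomogeneous $A$ --- a maximum over the strata, not a sum --- so the bound is automatically independent of $n$ and no reorganization of the decomposable approximations across strata is needed.
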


\begin{proof}
That $C_0(Y) \rtimes_{\alpha} \Z$ is subhomogeneous follows from the more general fact that if $\alpha$ is a periodic automorphism of a $C^*$-algebra $A$ with $\alpha^n = \id$, then $A$ embeds in $M_n(A) \otimes C(\T)$. We give the full details here, since we need a concrete description of the primitive ideal space of the crossed product to establish the bound on the decomposition rank of the crossed product.

We embed $C_0(Y) \rtimes_{\alpha} \Z$ in $M_n(C_0(Y) \otimes C(\T)) \cong M_n \otimes C_0(Y) \otimes C(\T) $ as follows. Let $z \in C(\T)$ be the standard generator. (Think of $\T$ as the unit circle in $\C$, and $z$ as the inclusion map.) Define $\beta \colon C_0(Y) \to M_n(C_0(Y)) \otimes C(\T)$ by $\beta(f) = \mathrm{diag}(f,\alpha(f),\ldots,\alpha^{n-1}(f)) \otimes 1_{C(\T)}$, and $u \in M (M_n(C_0(Y)) \otimes C(\T)) \cong M (M_n(C_0(Y) \otimes C(\T)))$ by 
$$
u = \left ( 
\begin{matrix} 0 & 1 & 0 &  \cdots & 0  \\
				0 & 0 & 1 &  \cdots & 0 \\
			   \vdots & \vdots  & \vdots & \ddots &\vdots   \\
			   0 & 0 & 0 &  \cdots & 1 \\
				1 \otimes z & 0 & 0 & \cdots & 0 
\end{matrix}
\right )
\, .
$$
Then $(\beta,u)$ is a covariant representation of $(C_0(Y),\alpha)$ (that is, $u\beta(f)u^* = \beta(\alpha(f))$). The associated homomorphism $\widetilde{\beta} \colon C_0(Y) \rtimes_{\alpha} \Z \to M_n(C_0(Y)) \otimes C(\T)$ is injective. To see that, let $B = C^*(\{\beta(f)u^n \mid f \in C_0(Y), n \in \Z\})$ be the image of $\widetilde{\beta}$. For any $\lambda \in \T$, set $v_{\lambda} = \mathrm{diag}(\bar{\lambda},\bar{\lambda}^2,\ldots,\bar{\lambda}^n)$. Let $\eta_{\lambda} \in \aut(C(\T))$ be the rotation automorphism given by $\eta_{\lambda}(z) = \lambda^nz$. Let $\gamma_{\lambda} \in \aut(M_n\otimes C_0(Y)  \otimes C(\T))$ be the automorphism given by $\gamma_{\lambda} = \mathrm{Ad}(v_{\lambda}) \circ (\id \otimes \id \otimes \eta_{\lambda})$. Then $B$ is invariant under $\gamma_{\lambda}$ for any $\lambda \in \T$. We restrict $\gamma_{\lambda}$ to $B$ and keep the same notation. Then $\gamma_{\lambda}(u) = \lambda u$ (where we extend $\gamma_{\lambda}$ to the multiplier algebra if need be), and $\gamma_{\lambda}(\beta(f)) = \beta(f)$ for all $f \in C_0(Y)$. Let $E \colon C_0(Y) \rtimes_{\alpha} \Z \to C_0(Y)$ be the canonical expectation, and let $E_{\gamma} \colon B \to \beta(C_0(Y))$ be the expectation given by integrating along $\lambda$. We have a commuting diagram:
$$
\xymatrix{
C_0(Y)\rtimes_{\alpha} \Z  \ar[r]^{\widetilde{\beta}} \ar[d]_{E} & B \ar[d]^{E_{\gamma}} \\
C_0(Y) \ar[r]_{\beta} & \beta(C_0(Y))
}
$$
Since $\beta$ is injective and the two expectations are faithful, the map $\widetilde{\beta}$ is an isomorphism. It follows that $C_0(Y) \rtimes_{\alpha}\Z$ is subhomogeneous. 

By what we have just shown, we can identify $B$ with $C_0(Y) \rtimes_{\alpha}\Z$. Since $B \subseteq M_n(C_0(Y)\otimes C(\T))$, all irreducible representations of $B$ have dimension at most $n$. Fix $k \leq n$. We denote by $\mathrm{Prim}_k(B)$ the primitive ideal space associated to irreducible representations of $B$ on $M_k$. We denote by $Y_k$ the set of all points whose orbit consists of exactly $k$ points. We claim that $\mathrm{Prim}_k(B) \cong (Y_k / \Z) \times \T$. 

Let $\varphi \colon C_0(Y) \rtimes_{\alpha} \Z \to M_k$ be an irreducible representation. The restriction $\varphi|_{C_0(Y)}$ is unitarily equivalent to a representation of the form 
\begin{equation}\label{eq:standard-form-f}
 f \mapsto \mathrm{diag}(f(y_1),f(y_2),\ldots,f(y_k))
\end{equation}
for some $y_1,y_2,\ldots,y_k \in Y$. Set $v = \varphi(u)$. We claim that those points are distinct, and constitute a $k$-periodic orbit of the action. Fix $j \in \{1,2,\ldots,k\}$. Suppose $f \in C_0(Y)$ is a positive element such that $f(y_j) = 1$, $f(y)<1$ for all $y \in Y \smallsetminus \{y_j\}$, and $f(y) = 0$ for all other points in the orbit of $y_j$ and for all $y \in \{y_1,y_2,\ldots,y_k\} \smallsetminus \{y_j\}$.  
Let $k'$  be the period of $y_j$. Then $\varphi(\alpha^l(f))$ are mutually orthogonal projections for $l=0,1,\ldots,k'-1$. Thus, the orbit of $y_j$ is contained in $\{y_1,y_2,\ldots,y_k\}$. To see that the orbit of $y_j$ in fact equals $\{y_1,y_2,\ldots,y_k\}$, note that $\sum_{l=0}^{k'-1} \varphi(\alpha^l(f))$ is a projection which commutes with $v$, and since $\varphi$ is irreducible, it equals $1$, so evaluation at the points of the orbit of $y_j$ coincides with evaluation at $\{y_1,y_2,\ldots,y_k\}$. Likewise, if $y_j$ is repeated $m$ times in the sequence $y_1,y_2,\ldots,y_k$, then any other element of the orbit of $y_j$ is repeated $m$ times, since
$\varphi(f)$ is unitarily equivalent to $\varphi(\alpha^l(f))$ for all $l$. If $m>1$, we can pick $p = \mathrm{diag}(1,0,0,\ldots,0)$ 
using this diagonalization, and then $\sum_{l=0}^{k'-1}v^lpv^{*l}$ is a nontrivial projection in $\varphi(B)'$, which cannot happen. Therefore, $k=k'$ and $\{y_1,y_2,\ldots,y_k\}$ is an orbit.

Through a unitary equivalence, we may assume that $\calpha^{-1}(y_j) = y_{j+1}$ for $j \in \{1, \ldots, k-1 \}$ and $\calpha^{-1}(y_k) = y_{1}$. Thus $v$ is forced to be of the form 
\begin{equation}\label{eq:standard-form-u}
 \left ( 
 \begin{matrix} 0 & \lambda_1 & 0 &  \cdots & 0  \\
				0 & 0 & \lambda_2 &  \cdots & 0 \\
			   \vdots & \vdots  & \vdots & \ddots &\vdots   \\
			   0 & 0 & 0 &  \cdots & \lambda_{k-1} \\
				\lambda_k & 0 & 0 & \cdots & 0 
 \end{matrix}
 \right )
\end{equation}
with $\lambda_1, \ldots, \lambda_k \in \T$. 

Now since $v^k = \varphi(u^k) \in \varphi(B)'$ and $\phi$ was assumed to be irreducible, there exists a $\lambda \in \T$ such that $v^k = \lambda 1_k$. The choice of orbit and this $\lambda$ define a map $\Psi \colon \mathrm{Prim}_k(B) \to (Y_k/\Z) \times \T$. We claim that $\Psi$ is a homeomorphism.

We first check that $\Psi$ is continuous. If $\varphi \colon B \to M_k$ is an irreducible representation, then a direct computation shows $\lambda = \prod_{j=1}^k \lambda_j = (-1)^{k+1}\det(\varphi(u))$, whence the second component is continuous. As for the first component, pick $[y] \in Y_k /\Z$ and a neighborhood $U$. Let $\{y_1,y_2,\ldots,y_k\}$ be the orbit of $y$, and let $V \subseteq Y$ be an open neighborhood of $\{y_1,y_2,\ldots,y_k\}$ such that $(V \cap Y_k) /\Z \subset U$. Pick $f \in C_0(V)$ which is $1$ on $\{y_1,y_2,\ldots,y_k\}$. Thus, the preimage of $U$ under the first component map of $\Psi$ contains the open set $\{[\pi] \mid \pi(f) \neq 0\}$, which is a neighborhood of the set of irreducible representations associated to this orbit. Therefore, $\Psi$ is continuous.

Given $([y],\lambda) \in (Y_k/\Z) \times \T$, we can consider the covariant representation given by 
\[
\varphi_y(f) = \left ( 
\begin{matrix} f(y) & 0 & 0 &  \cdots & 0 & 0 \\
				0 & f(\calpha^{-1}(y)) & 0 &  \cdots & 0 & 0\\
			   \vdots & \vdots  & \vdots & \ddots &\vdots  &  \vdots \\
			   0 & 0 & 0 &  \cdots & f(\calpha^{2-k}(y)) & 0\\
				0 & 0 & 0 & \cdots & 0 &  f(\calpha^{1-k}(y))
\end{matrix}
\right )
\, , \,
\]
\[
w_\lambda = \left ( 
\begin{matrix} 0 & 1 & 0 &  \cdots & 0  \\
				0 & 0 & 1 &  \cdots & 0 \\
			   \vdots & \vdots  & \vdots & \ddots &\vdots   \\
			   0 & 0 & 0 &  \cdots & 1 \\
				\lambda & 0 & 0 & \cdots & 0 
\end{matrix}
\right )
\, .
\]
Up to unitary equivalence, this does not depend on the choice of $y$ in the orbit $[y]$, so it is a preimage for $([y],\lambda)$. This shows that $\Psi$ is surjective. Furthermore, any two preimages of $([y],\lambda)$ are unitarily equivalent. To see this, note that after conjugating by a suitable unitary, we can assume that the representation is of the standard form given by \ref{eq:standard-form-f} and \ref{eq:standard-form-u}. However the matrix in \ref{eq:standard-form-u} is unitarily equivalent via conjugation by a diagonal matrix to the matrix $v$ above. This shows that $\Psi$ is injective as well. 

Lastly, we note that $\Psi^{-1}$ is continuous. To see that, it suffices to consider a sufficiently small neighborhood of $([y],\lambda)$, for some $y \in Y_k$ and $\lambda \in \T$, which is homeomorphic to a neighborhood of $(y,\lambda) \in Y_k \times \T$. The map sending $(y,\lambda)$ to the pair $(\varphi_y, w_\lambda)$, thought of as a map from $Y_k \times \T$ to the set of representations of $A$ on $M_k$, is clearly continuous, and therefore the composition with the quotient map to $\mathrm{Prim}(B)$ is continuous as well. 

Since $Y$ is metrizable, by Proposition \ref{prop:Pears75}, we have $\mathrm{dim}(Y_k) = \mathrm{dim}(Y_k/\Z)$. (A more elementary way to see it is to observe that $Y_k \to Y_k/\Z$ is a covering map, so $Y_k/\Z$ can be written as a finite union of closed sets, each of which is homeomorphic to a closed subset of $Y_k$. The identity then follows from \cite[Chapter 3, Proposition 5.7]{Pears75}. This uses less advanced techniques from dimension theory.) Now, by Theorem \ref{thm:Pears75}, we have $\dim(Y_k) \leq \dim(Y)$. Therefore, 
$$
 \mathrm{dim} (\mathrm{Prim}_k(B)) =  \mathrm{dim}(Y_k)+\mathrm{dim}(\T) \leq \mathrm{dim}(Y)+1 \, . 
$$ 
By the main theorem of \cite{winter-subhomogeneous} (page 430 of that article), it follows that $$
\dr(C_0(Y) \rtimes_{\alpha} \Z) \leq \mathrm{dim}(Y)+1
\, ,
$$
 as required.
\end{proof} 

As we have remarked in the introduction, it is crucial that the upper bound we get does not depend on the minimal period of $\alpha$.

\section{Actions with uniformly compact orbits}
\label{section: uniformly compact orbits}

We provide here an alternative way to bound the nuclear dimension of a crossed product by a periodic action. This method has the advantage of working in the much more general setting of crossed products by locally compact Hausdorff second countable groups. Throughout the section, such a group will be denoted by $G$. Following the notations in Section \ref{section:prelim}, we let $X$ be a locally compact Hausdorff space and we let $\calpha: G \curvearrowright X$ be a continuous action by homeomorphisms.

\begin{defn}
 A continuous action $\calpha: G \curvearrowright X$ is said to have \emph{uniformly compact orbits} if there exists a compact subset $K \subset G$ such that for any $x \in X$, we have $G \cdot x = K \cdot x$.
\end{defn}

It is clear that any $\Z$-action arising from a periodic homeomorphism has uniformly compact orbits. 

\begin{lem}\label{lem:bddper-basic}
 Let $\calpha: G \curvearrowright X$ be a continuous action with uniformly compact orbits. Then:
 \begin{enumerate}
  \item 
  \label{lem:bddper-basic-1}
  Every orbit $G \cdot x$ is compact.
  \item 
  \label{lem:bddper-basic-2}
  For any $x \in X$ and for any neighborhood $U$ of the orbit $G \cdot x$, there exists a $G$-invariant open set $V$ such that $G \cdot x \subset V \subset \overline{V} \subset U$ and $\overline{V}$ is compact. 
  \item 
  \label{lem:bddper-basic-3}
  The quotient space $X / G $ is Hausdorff and locally compact.
  \item 
  \label{lem:bddper-basic-4}
  The quotient map $\pi \colon X \to X/G$ is proper.
 \end{enumerate}
\end{lem}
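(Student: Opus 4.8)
The organizing observation I would record first is that uniform compactness of orbits upgrades from points to arbitrary sets: for any $S \subseteq X$ one has $G \cdot S = \bigcup_{s \in S} G \cdot s = \bigcup_{s \in S} K \cdot s = K \cdot S$. Two consequences then drive everything. First, the $G$-saturation of a \emph{compact} set $S$ equals $K \cdot S$, which is the image of the compact set $K \times S$ under the (continuous) action map $G \times X \to X$, hence compact. Second, the $G$-saturation of an \emph{open} set is open, being a union of homeomorphic translates. Part (1) is then immediate: $G \cdot x = K \cdot x$ is the image of the compact set $K$ under the continuous orbit map $g \mapsto g \cdot x$, so it is compact.

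Part (2) is where the real work lies, and the difficulty is not relative compactness but \emph{invariance}: a relatively compact open neighborhood of the orbit need not be $G$-invariant, and na\"ively saturating it may push its closure out of $U$. My plan is to first manufacture an invariant open ``buffer'' inside $U$ by taking the invariant interior $W = \{\, y \in X : G \cdot y \subseteq U \,\}$. Since $G \cdot y = K \cdot y$, membership in $W$ is the condition $K \cdot y \subseteq U$; using continuity of the action map and the tube lemma along the compact slice $K \times \{y\}$, I would show $W$ is open. It contains $G \cdot x$, lies in $U$ (as $y \in G \cdot y$), and is $G$-invariant because $G \cdot (g \cdot y) = G \cdot y$. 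Now invoke local compactness of $X$ to choose a relatively compact open $W'$ with $G \cdot x \subseteq W' \subseteq \overline{W'} \subseteq W$, and set $V = G \cdot W' = K \cdot W'$. This $V$ is $G$-invariant and open, and since $W$ is invariant we have $\overline{V} \subseteq K \cdot \overline{W'} \subseteq K \cdot W = W \subseteq U$, with $K \cdot \overline{W'}$ compact; hence $\overline{V}$ is a compact subset of $U$, as required.

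For (3) and (4) I would first note that $\pi$ is open, since $\pi^{-1}(\pi(O)) = G \cdot O$ is open for every open $O$. To prove $X/G$ is Hausdorff, I separate two distinct (hence disjoint, compact) orbits $G \cdot x$ and $G \cdot y$ by disjoint open sets $U_1, U_2$ in the Hausdorff space $X$, pass to the invariant interiors $V_i = \{\, z : G \cdot z \subseteq U_i \,\}$ as above, and observe that $V_1 \cap V_2 = \emptyset$ (a common point would have its whole orbit in $U_1 \cap U_2$). As the $V_i$ are invariant, disjoint, and open, their images $\pi(V_i)$ are disjoint open neighborhoods of $[x], [y]$. Local compactness then follows by pushing forward the objects from (2): taking $U = X$, the set $\pi(\overline{V})$ is compact and contains the open set $\pi(V) \ni [x]$, so it is a compact neighborhood. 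Finally, for properness, given compact $C \subseteq X/G$ I cover it by images $\pi(V_{[x]})$ of invariant, relatively compact open sets $V_{[x]}$ supplied by (2), extract a finite subcover indexed by $x_1, \dots, x_n$, and use invariance to get $\pi^{-1}(C) \subseteq \bigcup_i \pi^{-1}(\pi(V_{[x_i]})) = \bigcup_i V_{[x_i]} \subseteq \bigcup_i \overline{V_{[x_i]}}$; the right-hand side is compact, and $\pi^{-1}(C)$ is closed (as $C$ is closed in the Hausdorff space $X/G$), hence compact. The main obstacle throughout is the invariance issue in (2); once the invariant-interior construction is in hand, parts (3) and (4) are routine applications of openness of $\pi$ together with (1) and (2).
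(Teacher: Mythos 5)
Your proof is correct, and parts (1), (3) and (4) coincide with the paper's argument almost verbatim (cover $C$ by images of the invariant sets from (2), take a finite subcover, use invariance of the $V$'s and closedness of $\pi^{-1}(C)$). The only genuine divergence is in part (2), and it is worth comparing. The paper extends the action to the one-point compactification $X^+$ (which still has uniformly compact orbits, witnessed by the same $K$), uses normality of $X^+$ to shrink $U$ to an open $W$ with $G\cdot x \subseteq W \subseteq \overline{W} \subseteq U$, and then sets $V = X^+ \smallsetminus G\cdot(X^+\smallsetminus W)$; openness of $V$ is immediate because $G\cdot(X^+\smallsetminus W) = K\cdot(X^+\smallsetminus W)$ is compact, hence closed. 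You instead stay inside $X$: you prove openness of the invariant interior $W = \{y : K\cdot y \subseteq U\}$ via the tube lemma applied to the slice $K\times\{y\}$, shrink to a relatively compact open $W'$ using local compactness, and saturate, $V = K\cdot W'$. Both proofs rest on the same two pillars --- $G\cdot S = K\cdot S$ for every set $S$, and compactness of $K\cdot S$ for compact $S$ --- so conceptually they are the same; the difference is that the paper trades your tube-lemma-plus-local-compactness argument for the convenience of having all complements compact after compactifying, while your version avoids introducing $X^+$ altogether and so never needs to check that the extended action is continuous or that closures in $X^+$ agree with closures in $X$. Note also that the two constructions of $V$ are dual rather than identical: the paper takes the invariant interior of a shrunken neighborhood (complement of the saturation of the complement), whereas you take the saturation of a shrinking of the invariant interior; either one delivers an invariant open $V$ with compact closure inside $U$.
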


\begin{proof}
 (\ref{lem:bddper-basic-1}) is obvious. For (\ref{lem:bddper-basic-2}), the action extends trivially to the one point compactification, $X^+$, and the resulting action again has uniformly compact orbits, witnessed by the same compact subset $K \subset G$. Since $X^+$ is normal, there exists an open subset $W$ such that $G \cdot x \subseteq W \subseteq \overline{W} \subseteq U$. Now $X^+ \smallsetminus W$ is a compact set and thus so is $K \cdot (X^+ \smallsetminus W)$, which is equal to $G \cdot (X^+ \smallsetminus W)$ by the definition of having uniformly compact orbits. Define $V = X^+ \smallsetminus G \cdot (X^+ \smallsetminus W)$. One readily checks that $V$ satisfies the required properties. (\ref{lem:bddper-basic-3}) follows from (\ref{lem:bddper-basic-2}) by the definition of the quotient topology. As for (\ref{lem:bddper-basic-4}), let $C \subseteq X/G$ be a compact set. For any $x \in \pi^{-1}(C)$, find a $G$-invariant open set $V_x$ with compact closure so that $G \cdot x \subseteq V_x$. The collection $\{\pi(V_x)\}_{x \in X}$ forms an open cover of $C$, and therefore has a finite subcover, $\pi(V_{x_1}),\pi(V_{x_2}),\ldots,\pi(V_{x_n})$. Thus, $\pi^{-1}(C)$ is a closed subset of $\bigcup_{j=1}^n \overline{V_{x_j}}$, which is compact, and therefore $\pi^{-1}(C)$ is compact.  
\end{proof}

Using the notation of Lemma \ref{lem:bddper-basic} above, we have a homomorphism $\pi^* \colon C_0(X/G) \to C_0(X)$ given by $f \mapsto f \circ \pi$. Each element in $\pi^*(C_0(X/G))$ is $G$-invariant, and therefore defines an element in the center of the multiplier algebra of $C_0(X) \rtimes G$. This gives $C_0(X) \rtimes G$ the structure of a $C_0(X/G)$-algebra. We suppress the notation for $\pi^*$ in what follows.

If $Y \subseteq X$ is a closed subset, then we denote the restriction homomorphism from $C_0(X)$ to $C_0(Y)$ by $\mathrm{Res}_Y$. If $Y$ is $G$-invariant, then $\mathrm{Res}_Y$ is $G$-equivariant, and thus it induces a homomorphism $\mathrm{Res}_Y^{\rtimes G}: C_0(X) \rtimes G \to C_0(Y) \rtimes G$. In particular, if we set $Y = G \cdot x$ for some given $x$, then the kernel of $\mathrm{Res}_Y$ restricted to $C_0(X)$ is $C_0(X \smallsetminus \{G \cdot x\})$. Therefore, in the $C_0(X/G)$-algebra structure described above, the fiber over $G \cdot x$ is $C(G \cdot x) \rtimes G$. 

The following lemma is essentially taken from \cite[Lemma 3.1]{carrion}, with two minor differences. First, \cite[Lemma 3.1]{carrion} is stated for decomposition rank, whereas we need the analogous statement for nuclear dimension. However, the proof carries over essentially verbatim for nuclear dimension as well, and therefore we do not repeat it. Second, the statement there applies to $C(X)$-algebras where $X$ is assumed to be compact, and we need to use it for locally compact spaces. Again, the modification is trivial: any $C_0(X)$-algebra can be viewed as a $C(X^+)$-algebra, where the fiber at infinity is $0$. As the modifications needed are immediate, we do not repeat the proof.

\begin{Lemma}
\label{lem:bundle-estimate}
Let $Y$ be a locally compact Hausdorff second countable space, and let $A$ be a separable $C_0(Y)$-algebra. We denote by $A_y$ the fiber over $y$. Then:
 \[
  \dimnuc^{+1} ( A) \leq \dim^{+1} (Y) \cdot \sup_{y \in Y} \dimnuc^{+1} ( A_y ) 
 \]
and 
 \[
  \dr^{+1} ( A) \leq \dim^{+1} (Y) \cdot \sup_{y \in Y} \dr^{+1} ( A_y ) 
  \, .
 \]
 \qed
\end{Lemma}

We now apply this lemma to crossed products induced from actions with uniformly compact orbits. We denote the stabilizer group of a point $x \in X$ by $G_x = \{ g \in G ~|~ g \cdot x = x\}$.

\begin{thm}\label{thm:dimnuc-uniformly-compact-orbits}
 Let $\calpha: G \curvearrowright X$ be a continuous action with uniformly compact orbits. Then 
 \[
  \dimnuc^{+1} ( C_0(X) \rtimes G ) \leq \dim^{+1} (X / G) \cdot \sup_{x \in X} \dimnuc^{+1} ( C^*(G_x) ) 
 \]
and 
 \[
  \dr^{+1} ( C_0(X) \rtimes G ) \leq \dim^{+1} (X / G) \cdot \sup_{x \in X} \dr^{+1} ( C^*(G_x) ) \; . 
 \]
\end{thm}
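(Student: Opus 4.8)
The plan is to combine the $C_0(X/G)$-algebra structure on $C_0(X)\rtimes G$ with the bundle estimate of Lemma \ref{lem:bundle-estimate}, applied with base space $Y = X/G$. Since we have already identified the fiber over an orbit $G\cdot x$ with $C(G\cdot x)\rtimes G$, Lemma \ref{lem:bundle-estimate} would give, in the separable setting,
\[
 \dimnuc^{+1}(C_0(X)\rtimes G) \le \dim^{+1}(X/G)\cdot \sup_{x\in X}\dimnuc^{+1}\!\big(C(G\cdot x)\rtimes G\big),
\]
together with the analogous inequality for $\dr$; the whole problem then reduces to computing the fibers, namely to showing $\dimnuc(C(G\cdot x)\rtimes G) = \dimnuc(C^*(G_x))$ and $\dr(C(G\cdot x)\rtimes G) = \dr(C^*(G_x))$.

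For that computation I would first use that every orbit is compact (Lemma \ref{lem:bddper-basic}(\ref{lem:bddper-basic-1})) together with the second countability of $G$ to identify $G\cdot x$ equivariantly with the homogeneous space $G/G_x$, so that $C(G\cdot x)\rtimes G \cong C_0(G/G_x)\rtimes G$. By Green's imprimitivity theorem the latter is strongly Morita equivalent to $C^*(G_x)$. Both $C^*$-algebras are separable (the orbit $G/G_x$ is a quotient of the second countable group $G$, and $G_x$ is second countable), so their stabilizations are isomorphic by the Brown--Green--Rieffel theorem; since nuclear dimension and decomposition rank are invariant under stabilization, and hence under Morita equivalence of separable $C^*$-algebras, the two equalities of fiber invariants follow. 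Substituting into the displayed estimate settles the separable case of both inequalities.

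The main obstacle is the passage to the non-separable case, since Lemma \ref{lem:bundle-estimate} requires $C_0(X)\rtimes G$ and $X/G$ to be separable, whereas $X$ is only assumed locally compact Hausdorff. To handle this I would reduce to separable $G$-invariant subalgebras in the spirit of Lemma \ref{lem:separable-dimnuc}: it is enough to produce, for each finite $F\subseteq C_c(G,C_c(X))$ and each $\eps>0$, a separable $G$-invariant subalgebra of the form $C_0(Z)\rtimes G$ containing $F$, where $Z$ is a second countable locally compact Hausdorff $G$-space with uniformly compact orbits satisfying $\dim(Z/G)\le \dim(X/G)$ and with stabilizers occurring among the $G_x$. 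Since the $X$-supports of the finitely many elements of $F$ lie in a compact set, Lemma \ref{lem:bddper-basic} lets me trap their saturation in a $G$-invariant open set with compact closure, and classical dimension theory (in particular the monotonicity in Theorem \ref{thm:Pears75}) is used to keep the dimension of the orbit space under control when passing to a second countable model. Once $C_0(Z)\rtimes G$ is in hand, the separable case yields a piecewise contractive decomposable approximation of $F$ of the prescribed order with image in $C_0(Z)\rtimes G$; because finite-dimensional $C^*$-algebras are injective, the contractive leg extends by Arveson's theorem from the subalgebra to all of $C_0(X)\rtimes G$, and composing with the inclusion produces an approximation in $C_0(X)\rtimes G$ itself. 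As this is available for every $(F,\eps)$, the bound descends to $C_0(X)\rtimes G$, and the decomposition rank statement follows verbatim from the $\dr$ half of Lemma \ref{lem:bundle-estimate}. The delicate point, and the place where the hypothesis of uniformly compact orbits is essential, is guaranteeing that the second countable model $Z$ simultaneously captures the correct orbit-space dimension and the correct family of stabilizer subgroups.
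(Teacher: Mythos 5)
Your core argument coincides with the paper's proof: the paper likewise views $C_0(X)\rtimes G$ as a $C_0(X/G)$-algebra whose fiber over an orbit is $C(G\cdot x)\rtimes G$, invokes strong Morita equivalence with $C^*(G_x)$ (citing \cite[II.10.4.14]{blackadar-operator-algebras}) to conclude $\dimnuc(C(G\cdot x)\rtimes G)=\dimnuc(C^*(G_x))$ and $\dr(C(G\cdot x)\rtimes G)=\dr(C^*(G_x))$, and then applies Lemma \ref{lem:bundle-estimate} --- exactly your first two paragraphs. The separable reduction in your final paragraph goes beyond what the paper does (the paper applies Lemma \ref{lem:bundle-estimate} directly, leaving its separability hypotheses implicit), and while that sketch is indeed the one incomplete point --- a $G$-invariant open subset of a non-second-countable $X$ is still not a second countable model, and Theorem \ref{thm:Pears75} requires total normality of the ambient space, so one would need a factorization-type argument there --- it supplements rather than replaces the argument the paper actually gives, so your proposal matches the paper's proof.
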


\begin{proof}
Fix $x \in X$. Recall that by \cite[II.10.4.14]{blackadar-operator-algebras}, $C(G \cdot x) \rtimes G$ is strongly Morita equivalent to $C^*(G_x)$. Therefore, $\dimnuc (C(G \cdot x) \rtimes G) = \dimnuc(C^*(G_x))$ and $\dr (C(G \cdot x) \rtimes G) = \dr(C^*(G_x))$. The statement now follows immediately from Lemma \ref{lem:bundle-estimate} above.
\end{proof}

Finally, we relate the dimension of $X$ to that of its quotient by the action of $G$.
\begin{prop}\label{prop:dim-quotient-discrete}
 Let $\calpha: G \curvearrowright X$ be a continuous action with uniformly compact orbits. If $G$ is discrete, then $\dim(X / G ) = \dim(X)$. 
\end{prop}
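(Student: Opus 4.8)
The plan is to apply Proposition \ref{prop:Pears75} to the quotient map $\pi \colon X \to X/G$. First I would extract the combinatorial consequence of the hypotheses: since $G$ is discrete, the compact subset $K \subset G$ witnessing uniform compactness of the orbits is finite, so every orbit $G \cdot x = K \cdot x$ has at most $|K|$ points. Hence $\pi$ is a finite-to-one surjection. It is automatically continuous and open, since for open $U \subseteq X$ one has $\pi^{-1}(\pi(U)) = \bigcup_{g \in G} g \cdot U$, which is open, and $\pi$ is a quotient map. By Lemma \ref{lem:bddper-basic} (\ref{lem:bddper-basic-3}) the space $X/G$ is locally compact Hausdorff, and by (\ref{lem:bddper-basic-4}) the map $\pi$ is proper. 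Thus $\pi$ meets the map-theoretic hypotheses of Proposition \ref{prop:Pears75}, and the one remaining point is to supply the topological regularity hypotheses on the two spaces.

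The regularity hypotheses (weak paracompactness and normality) are the crux, since they are not automatic for locally compact Hausdorff spaces. In the case relevant to the paper $X$ is metrizable, hence paracompact, and therefore normal and weakly paracompact; moreover $X/G$ is then paracompact as well, because $\pi$ is a proper (hence closed) continuous surjection and paracompactness is preserved by such maps. In that case Proposition \ref{prop:Pears75} applies to $\pi$ verbatim and yields $\dim(X) = \dim(X/G)$.

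For a general locally compact Hausdorff $X$ I would instead localize. By Lemma \ref{lem:bddper-basic} (\ref{lem:bddper-basic-2}) one can cover $X$ by $G$-invariant open sets $V$ with $\overline{V}$ compact (and necessarily $G$-invariant). For each such $V$ the restriction $\pi|_{\overline V} \colon \overline V \to \overline V / G$ is the quotient map for the $G$-action on the saturated compact set $\overline V$; it is therefore open and finite-to-one, its target is compact Hausdorff, and the quotient topology coincides with the subspace topology inherited from $X/G$. Since compact Hausdorff spaces are normal and paracompact (in particular weakly paracompact), Proposition \ref{prop:Pears75} gives $\dim(\overline V) = \dim(\overline V / G)$. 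The main obstacle is then the point-set step of passing from these compact pieces to the whole space, i.e.\ establishing $\dim(X) = \sup_V \dim(\overline V)$ and $\dim(X/G) = \sup_V \dim(\overline V / G)$ via a covering-dimension sum/localization theorem. This is where the argument is delicate for non-paracompact $X$: in the paracompact (e.g.\ metrizable) case one writes $X$ as a topological sum of $\sigma$-compact clopen pieces and applies the countable closed-sum theorem on each, but without some form of paracompactness a sum theorem of this kind need not be available. Granting the localization, combining $\dim(\overline V) = \dim(\overline V / G)$ over the cover yields $\dim(X) = \dim(X/G)$.
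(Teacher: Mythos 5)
Your proposal is correct and follows essentially the same route as the paper: the paper likewise observes that discreteness together with uniformly compact orbits makes the quotient map a finite-to-one, continuous, open surjection, and then invokes Proposition \ref{prop:Pears75} directly. Your extra verification of the weak paracompactness and normality hypotheses (automatic in the metrizable setting, which is what the paper intends, cf.\ the remark preceding Theorem \ref{thm:Pears75}) and your flagging of the genuinely delicate non-metrizable case are details that the paper's one-line proof leaves implicit, not a departure from its method.
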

\begin{proof}
 Since each orbit is finite, the quotient map from $X$ to $X / G$ is a finite-to-one map.
 The quotient map is furthermore open, since if $U$ is an open set in $X$, so is $G \cdot U$, and by the definition of the quotient topology, the image of $G \cdot U$ is open, and coincides with the image of $U$. The conclusion now follows directly from Proposition  \ref{prop:Pears75}.
\end{proof}

\section{Rokhlin towers for homeomorphisms without short orbits}
\label{section: long orbits}

We return to topological actions by $\Z$, and use the results from the appendix to construct Rokhlin-type towers, provided that there is lower bound on the lengths of the orbits which is large enough compared to the desired lengths of the towers. This is achieved through a refined version of the marker property, whose details are contained in Appendix \ref{sec:appendix}, by G\'abor Szab\'o. The following lemma is a special case of Lemma \ref{local marker lemma} for actions of $\Z$.

\begin{lem}
\label{lem:marker-Z}
 Let $X$ be a locally compact metrizable space with covering dimension at most $d$. Let $\calpha: \Z \curvearrowright X$ be an action. Suppose there is an $m \in \Z^+$ such that $| \Z \cdot x | > (d+1) (4m+1) $ for any $x \in X$. Then given any compact subset $ K \subset X $, there exists an open subset $Z \subset X$ such that 
 {\renewcommand{\theenumi}{\alph{enumi}}
 \begin{enumerate}
  \item \label{lem:marker-Z-a} $\alpha_{-m}(\overline{Z}), \ldots, \alpha_{0}(\overline{Z}) , \ldots, \alpha_{m}(\overline{Z})$ are pairwise disjoint;
  \item \label{lem:marker-Z-b} $\displaystyle K\subset \bigcup_{i = 1} ^{(d+1)(4m+1)} \alpha_i (Z)$.
 \end{enumerate}
 }
\end{lem}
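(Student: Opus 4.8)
The plan is to run a marker-type construction for the single homeomorphism $\alpha_1$, extracting condition (\ref{lem:marker-Z-a}) from the lower bound on orbit lengths and condition (\ref{lem:marker-Z-b}) from covering dimension. Since the statement is advertised as the $\Z$-specialization of the appendix's Lemma \ref{local marker lemma}, the quickest route is simply to invoke that lemma with $G=\Z$ and window $\{-m,\dots,m\}$; what follows is the self-contained shape of the argument, which is also what the general lemma carries out.

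First I would produce local ``move-off-itself'' neighborhoods. Fix a relatively compact open set $O$ with $K\subset O$. For any $x\in X$ the hypothesis gives $|\Z\cdot x|>(d+1)(4m+1)\geq 4m+1$, so the points $\alpha_{-2m}(x),\dots,\alpha_{2m}(x)$ are pairwise distinct: any two of these indices differ by at most $4m$, which is strictly less than the orbit length, so no such difference is a multiple of $|\Z\cdot x|$. By continuity of the $\alpha_i$ and the Hausdorff property, each $x\in K$ then has an open neighborhood $U_x$, relatively compact in $O$, such that the closures $\alpha_{i}(\overline{U_x})$ for $-2m\le i\le 2m$ are pairwise disjoint. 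By compactness of $K$, finitely many $U_1,\dots,U_p$ cover $K$.

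Next I would bring in dimension. Since $O$ is a subspace of the metrizable space $X$, Theorem \ref{thm:Pears75} gives $\dim(O)\le d$. Hence the finite open cover $\{U_j\}$ of $K$ admits, by the standard decomposition property of covering dimension, an open refinement that splits into $d+1$ pairwise-disjoint families $\mathcal{V}_0,\dots,\mathcal{V}_d$, each member contained in some $U_j$, whose union still covers $K$. The point of this step is that within a single color $\mathcal{V}_c$ the members are disjoint and each inherits the $(4m+1)$-window separation of its ambient $U_j$, so a clean marker can be built one color at a time. Finally I would assemble $Z$: working color by color and thinning inductively, removing from each newly adjoined open set the part that would collide, under the translates $\alpha_i$ with $1\le|i|\le 2m$, with what has already been placed, one obtains an open set $Z$ whose window $\{\alpha_i(\overline Z)\}_{|i|\le m}$ is pairwise disjoint, which is (\ref{lem:marker-Z-a}). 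Tracking the cover, each of the $d+1$ colors contributes a block of at most $4m+1$ consecutive translates needed to recapture the points it was responsible for, and concatenating the blocks recovers $K$ from $\alpha_1(Z),\dots,\alpha_{(d+1)(4m+1)}(Z)$, which is (\ref{lem:marker-Z-b}).

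The main obstacle is precisely this last step: maintaining disjointness of the \emph{closures} $\alpha_i(\overline Z)$ for $|i|\le m$, not merely of the open sets $\alpha_i(Z)$, while simultaneously keeping the cover within the sharp bound $(d+1)(4m+1)$. A naive combination of the per-color markers recovers $K$ with only $4m+1$ translates but destroys the closure-disjointness across colors, whereas offsetting the colors to disjoint phases repairs the phases along each orbit but not the set-level collisions. It is exactly the passage from open sets to closures, executed with controlled multiplicity, that forces the dimension-indexed, color-by-color bookkeeping and is the reason the clean statement is isolated and proved in full generality by Szab\'o in the appendix.
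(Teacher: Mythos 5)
You have not given a proof; you have given a pointer to Lemma \ref{local marker lemma} followed by a sketch whose crucial step you yourself flag as unresolved. The paper's proof of Lemma \ref{lem:marker-Z} \emph{is} the reduction to the appendix, but the entire content of that reduction is the data and hypothesis-checking you omit: one sets $F = [-m,m]\cap\Z$ and $g_l = (2m+1) + l(4m+1)$ for $l = 0,\dots,d$, so that the blocks $g_lF^{-1}F$ are pairwise disjoint and $M = \bigcup_{l=0}^d g_lF^{-1}F = [1,(d+1)(4m+1)]\cap\Z$ (which is exactly why the translates $\alpha_1(Z),\dots,\alpha_{(d+1)(4m+1)}(Z)$ appear in (\ref{lem:marker-Z-b})); one then checks that the orbit-length hypothesis makes $n\mapsto\calpha_n(x)$ injective on $M^{-1}$, i.e.\ $X(M^{-1}) = X$, and invokes Lemma \ref{local TSBP} to obtain the $(M,d)$-small boundary property, which is the remaining hypothesis of Lemma \ref{local marker lemma}. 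Your proposal never specifies $M$ or the $g_l$, never verifies $X(M^{-1})=X$, and never mentions the small boundary property, so the invocation is not actually executed.

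The ``self-contained shape of the argument'' you substitute is, moreover, not what the appendix carries out, and it breaks in two places. First, within a single color $\mathcal{V}_c$, pairwise disjointness of the members plus window separation of each member individually does \emph{not} give window separation of the union: for distinct $V, V'\in\mathcal{V}_c$ one can perfectly well have $\alpha_i(V)\cap\alpha_{i'}(V')\neq\emptyset$ with $|i|,|i'|\leq m$, so ``a clean marker can be built one color at a time'' is unjustified. Second, your assembly step -- thinning each newly adjoined set by deleting the parts that collide with what was already placed -- destroys the covering property, and you give no mechanism for recovering it within the bound $(d+1)(4m+1)$; you concede this is ``the main obstacle'' and leave it open. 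The appendix resolves exactly this obstacle by a different mechanism: dimension enters not through coloring the cover but through general position of the boundaries $\alpha_\gamma(\del U)$ (Lemma \ref{local TSBP}), which yields the pigeonhole estimate in Lemma \ref{key lemma} -- every point of the residual set $R = \overline{V}\setminus\bigcup_{g\in M}\alpha_g(U)$ has a ball meeting at most $d$ of the translates $\alpha_g(\overline{U})$, $g\in M$, hence some block $g_{c(i)}F^{-1}F$ is entirely free, and the residual ball is adjoined to the marker at the shifted phase $g_{c(i)}^{-1}$ rather than thinned. Without either executing the reduction or supplying this (or an equivalent) mechanism, the proposal has a genuine gap.
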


\begin{proof}
 We set $G = \Z$, $F= [-m , m] \cap \Z$ and $g_l = (2m+1) + l (4m+1)$ for $l=0,1,\ldots,d$, whence $M = \bigcup_{l=0}^d g_l F^{-1} F = [1, (d+1)(4m+1)] \cap \Z$. The assumption that each orbit has length greater than $(d+1) (4m+1)$ implies that for any $x \in X$, the map $M^{-1} \to X$ given by $n \mapsto \calpha_n (x)$ is one-to-one, that is, $X(M^{-1}) = X$. Since $X$ is a locally compact metrizable space with covering dimension $d$, by Lemma \ref{local TSBP}, $(X,\calpha,\Z)$ has the $(M,d)$-small boundary property. Finally, applying Lemma \ref{local marker lemma} to $G$, $K$, $F$ and $M$, we obtain an open subset $Z \subset X$ with the desired properties.
\end{proof}

Notice that because of condition (\ref{lem:marker-Z-a}) in the previous lemma, the open cover $\{\alpha_i (Z)\}_{i = 1, \ldots (d+1)(4m+1)}$ of $K$ appearing in (\ref{lem:marker-Z-b}) may be split into $\left\lceil \frac{(d+1)(4m+1)}{2m+1} \right\rceil$ topological Rokhlin towers of length $(2m+1)$ in the sense of \cite[Section 2]{szabo}, possibly with some overlaps among the towers. Next, we construct a partition of unity subordinate to this open cover of $K$, which will play the role of $C^*$-algebraic Rokhlin towers in the sense of \cite{HWZ} (see Remark \ref{rmk:Rokhlin-towers} for further discussion). When we split $\{\alpha_i (Z)\}_{i = 1, \ldots (d+1)(4m+1)}$ into topological Rokhlin towers, it is advantageous to first do so with sufficiently large overlaps among the towers. These overlaps are controlled by a new parameter $k$ in the following lemma.

\begin{lem}\label{lem:tower-Z}
 Let $X$ be a locally compact metrizable space with covering dimension at most $d$. Fix $k, m \in \Z^+$ and a compact subset $ K \subset X $, and suppose $\calpha: \Z \curvearrowright X$ is an action such that $| \Z \cdot x | > (d+1) (4m+1) $ for any $x \in X$. 
 \begin{enumerate}
  \item \label{lem:tower-Z-1} If $m \geq (2d+3)k - \frac{d}{2} - 1$, then there exist open subsets $Z^{(0)}, \ldots, Z^{(2d+2)} \subset X$ such that 
  \begin{enumerate}
   \item 
   \label{lem:tower-Z-a}
   $\alpha_{-m}(\overline{Z}^{(l)} ), \ldots, \alpha_{0}(\overline{Z}^{(l)} ) , \ldots, \alpha_{m}(\overline{Z}^{(l)} )$ are pairwise disjoint for any $l \in \{0, \ldots, 2d+2\}$;
   \item 
   \label{lem:tower-Z-b}
   $\displaystyle K\subset \bigcup_{l = 0} ^{2d+2} \bigcup_{i = -(m-k)} ^{m-k} \alpha_i (Z^{(l)})$.
  \end{enumerate}
  \item \label{enum:tower-decay-Z} If $\varepsilon >0$ satisfies  $m \geq (2d+3) k \lceil \frac{1}{\varepsilon} \rceil - \frac{d}{2} - 1$, then there exist open subsets $Z^{(0)}, \ldots, Z^{(2d+2)} \subset X$ satisfying the above two conditions, as well as $\{ \mu^{(l)}_j \}_{l \in \{0, \ldots, 2d+2\} ; j \in \Z} \subset C_c(X)_{+, \leq 1}$ satisfying
  \begin{enumerate}
   \setcounter{enumii}{2}
   \item
   \label{lem:tower-Z-c}
    $ \mathrm{supp} ( \mu^{(l)}_j ) \subseteq 
    \begin{cases}
     \alpha_j (Z^{(l)}) , & \text{if}\ |j| \leq m \\
     \emptyset, & \text{if}\ |j| > m
    \end{cases}$
    for any $l \in \{0, \ldots, 2d+2 \}$;
   \item 
   \label{lem:tower-Z-d}
   $\displaystyle \sum_{l=0}^{2d+2} \sum_{j=-m}^{m} \mu_j^{(l)}(x) = 1$ for all $x \in K$;
   \item 
   \label{lem:tower-Z-e}
   $\left\| \mu_j^{(l)} \circ \calpha_i - \mu_{j-i}^{(l)} \right\| < \varepsilon$ for all $j \in \Z$, for all $i \in \Z \cap [-k, k]$ and for all $l \in \{0, \ldots, 2d+2 \}$. 
  \end{enumerate}
 \end{enumerate}
\end{lem}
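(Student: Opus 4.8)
The plan is to prove the two parts separately, with part (\ref{lem:tower-Z-1}) a purely combinatorial covering statement that will be fed into part (\ref{enum:tower-decay-Z}).

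For part (\ref{lem:tower-Z-1}), I would first invoke Lemma \ref{lem:marker-Z} to produce a single open marker set $Z$ whose translates $\alpha_{-m}(\overline{Z}),\dots,\alpha_m(\overline{Z})$ are pairwise disjoint and with $K\subseteq\bigcup_{i=1}^{(d+1)(4m+1)}\alpha_i(Z)$. I would then take each $Z^{(l)}$ to be a translate $\alpha_{c_l}(Z)$ of this single set, so that condition (\ref{lem:tower-Z-a}) is automatic (translates of $2m+1$ consecutive disjoint translates stay disjoint), and the inner window $\bigcup_{|i|\le m-k}\alpha_i(Z^{(l)})$ of tower $l$ is exactly $\bigcup_{s\in[c_l-(m-k),\,c_l+(m-k)]}\alpha_s(Z)$. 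Thus (\ref{lem:tower-Z-b}) reduces to choosing the $2d+3$ centers $c_0,\dots,c_{2d+2}$ so that the integer intervals $[c_l-(m-k),c_l+(m-k)]$, each of length $2(m-k)+1$, cover $[1,(d+1)(4m+1)]$. This is possible precisely because $(2d+3)(2(m-k)+1)\ge(d+1)(4m+1)$, which, using the elementary identity $(2d+3)(2m+1)-(d+1)(4m+1)=2m+d+2$, is equivalent to the hypothesis $m\ge(2d+3)k-\tfrac d2-1$; hence $\lceil (d+1)(4m+1)/(2(m-k)+1)\rceil\le 2d+3$ intervals suffice.

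For part (\ref{enum:tower-decay-Z}), set $n=\lceil 1/\eps\rceil$ and $k'=kn$, and let $\widetilde K=\bigcup_{p=0}^{k'}\alpha_p(K)$, which is again compact. The key reduction is that it suffices to build, for each $l$, a single \emph{tower-mass} function $\nu^{(l)}\in C_c(X)_{+,\le1}$ supported on $\bigcup_{|j|\le m}\alpha_j(Z^{(l)})$, with $\sum_l\nu^{(l)}=1$ on $K$ and $\le1$ everywhere, that is approximately invariant in the sense $\|\nu^{(l)}\circ\calpha_i-\nu^{(l)}\|<\eps$ for $|i|\le k$. Indeed, one then simply slices $\nu^{(l)}$ along the closure-disjoint levels, $\mu_j^{(l)}:=\nu^{(l)}\cdot 1_{\alpha_j(Z^{(l)})}$, and computes directly that $\mu_j^{(l)}\circ\calpha_i-\mu_{j-i}^{(l)}=(\nu^{(l)}\circ\calpha_i-\nu^{(l)})\cdot 1_{\alpha_{j-i}(Z^{(l)})}$, so that (\ref{lem:tower-Z-c}), (\ref{lem:tower-Z-d}) and (\ref{lem:tower-Z-e}) all follow from the properties of $\nu^{(l)}$. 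To build the $\nu^{(l)}$, I would apply part (\ref{lem:tower-Z-1}) with $k$ replaced by $k'$ (legitimate, since the hypothesis of part (\ref{enum:tower-decay-Z}) is exactly the hypothesis of part (\ref{lem:tower-Z-1}) for the buffer $k'$) and with the compact set $\widetilde K$, obtaining $Z^{(l)}$ satisfying (\ref{lem:tower-Z-a}) and covering $\widetilde K$ through the inner windows $U^{(l)}=\bigcup_{|i|\le m-k'}\alpha_i(Z^{(l)})$; since $K\subseteq\widetilde K$ and $m-k'\le m-k$, conditions (\ref{lem:tower-Z-a}) and (\ref{lem:tower-Z-b}) hold as stated. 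I would then take an ordinary partition of unity $\{a^{(l)}\}$ on $\widetilde K$ subordinate to $\{U^{(l)}\}$ and average it along the orbit, setting
\[
 \nu^{(l)}\ :=\ \frac{1}{k'+1}\sum_{p=0}^{k'} a^{(l)}\circ\calpha_p.
\]
Since $a^{(l)}\circ\calpha_p$ is supported in levels $[-(m-k')-p,\,(m-k')-p]$, for $0\le p\le k'$ the average is supported in levels $[-m,\,m-k']\subseteq[-m,m]$, giving the required support; and for $|i|\le k$ the difference $\nu^{(l)}\circ\calpha_i-\nu^{(l)}$ telescopes to $\tfrac1{k'+1}$ times a difference of two nonnegative sums of $|i|$ boundary terms, whence $\|\nu^{(l)}\circ\calpha_i-\nu^{(l)}\|\le \tfrac{k}{k'+1}<\tfrac1n\le\eps$.

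The main obstacle, and the reason for the two enlargements, is the tension between (\ref{lem:tower-Z-d}) and (\ref{lem:tower-Z-e}): the orbit-averaging that produces approximate invariance necessarily transports mass along the orbit and off of $K$, so $\sum_l\nu^{(l)}$ is only controlled where $\sum_l a^{(l)}$ equals $1$. This is exactly why the base partition of unity must sum to $1$ not merely on $K$ but on the forward-thickened set $\widetilde K$: then for $x\in K$ and $0\le p\le k'$ one has $\calpha_p(x)\in\widetilde K$, so $\sum_l\nu^{(l)}(x)=\tfrac1{k'+1}\sum_{p=0}^{k'}\bigl(\sum_l a^{(l)}\bigr)(\calpha_p(x))=1$, yielding (\ref{lem:tower-Z-d}) (and $\le 1$ everywhere since $\sum_l a^{(l)}\le1$). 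Demanding simultaneously that the averaging window have length $k'+1>k/\eps$ (to push the invariance error below $\eps$) and that the shifted supports stay within levels $[-m,m]$ (so $\mu_j^{(l)}=0$ for $|j|>m$) forces the buffer between the inner level $m-k'$ and the top level $m$ to have width $k'=k\lceil1/\eps\rceil$, which is precisely the source of the hypothesis $m\ge(2d+3)k\lceil1/\eps\rceil-\tfrac d2-1$. A minor final point is to verify that the slices $\mu_j^{(l)}$ are genuinely continuous, which holds because $\supp\nu^{(l)}$ meets each closed level $\alpha_j(\overline{Z^{(l)}})$ only inside the open set $\alpha_j(Z^{(l)})$.
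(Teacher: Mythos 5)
Your part (\ref{lem:tower-Z-1}) is exactly the paper's argument (translate the marker set $Z$ of Lemma \ref{lem:marker-Z} by suitable centers and verify the same arithmetic), and your part (\ref{enum:tower-decay-Z}) is, underneath, also the paper's construction: enlarge $k$ to $k'=k\lceil 1/\eps\rceil$, thicken $K$, take a partition of unity subordinate to the towers, and average along the orbit (your one-sided averaging versus the paper's symmetric averaging over $[-k',k']$ is cosmetic). However, your ``key reduction'' --- that conditions (\ref{lem:tower-Z-c}), (\ref{lem:tower-Z-d}), (\ref{lem:tower-Z-e}) follow by slicing any $\nu^{(l)}$ having your three listed properties --- is false, and this is a genuine gap. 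The problem is orbit wraparound: the marker property only gives $\overline{Z^{(l)}}\cap\calpha_i(\overline{Z^{(l)}})=\emptyset$ for $0<|i|\le 2m$, so for $|j|>m$ the set $\calpha_j(Z^{(l)})$ may re-enter the tower; for instance $\calpha_{m+1}(Z^{(l)})$ meets $\calpha_{-m}(Z^{(l)})$ whenever $Z^{(l)}$ contains two points $z$ and $\calpha_{2m+1}(z)$ of one orbit, which nothing forbids. Hence the slice $\nu^{(l)}\cdot 1_{\calpha_j(Z^{(l)})}$ need not vanish for $|j|>m$, so (\ref{lem:tower-Z-c}) can fail; and if you instead decree $\mu^{(l)}_j=0$ for $|j|>m$, your identity $\mu_j^{(l)}\circ\calpha_i-\mu_{j-i}^{(l)}=(\nu^{(l)}\circ\calpha_i-\nu^{(l)})\cdot 1_{\calpha_{j-i}(Z^{(l)})}$ breaks exactly in the boundary cases where one of $j$, $j-i$ lies in $[-m,m]$ and the other does not, and there (\ref{lem:tower-Z-e}) does \emph{not} follow from your three properties. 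Concretely: on a single periodic orbit of period $4m+2$ (allowed, since $4m+2>(d+1)(4m+1)$ for $d=0$), with $Z^{(0)}=\{z,\calpha_{2m+1}(z)\}$, the constant function $\nu^{(0)}\equiv 1$ satisfies all three of your properties (the levels $-m,\dots,m$ are pairwise disjoint and their union is the whole orbit, and $\nu^{(0)}$ is exactly invariant), yet the sliced family has $\big\|\mu^{(0)}_{m+1}\circ\calpha_{1}-\mu^{(0)}_{m}\big\|=\big\|1_{\calpha_m(Z^{(0)})}\big\|=1$.

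The gap is repairable, and the repair is precisely what the paper does: keep the level index \emph{before} averaging instead of recovering it afterwards by slicing. That is, slice the partition of unity first into level-indexed bumps $p^{(l)}_j$ supported in $\calpha_j(Z^{(l)})$ for $|j|\le m-k'$, declare $p^{(l)}_j=0$ for all other $j\in\Z$, and set $\mu^{(l)}_j=\frac{1}{2k'+1}\sum_{|i|\le k'}p^{(l)}_{j+i}\circ\calpha_i$; then $\mu^{(l)}_j=0$ for $|j|>m$ holds automatically (every summand has an out-of-range index), and the telescoping estimate for (\ref{lem:tower-Z-e}) is uniform in $j\in\Z$, with no wraparound terms ever appearing. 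Your own $\nu^{(l)}$, once unwound, is the sum over $j$ of exactly such averaged bumps, and it does enjoy the extra decay needed to patch the boundary cases --- it vanishes on the levels $m-k'+1,\dots,m$ and is at most $(r+1)/(k'+1)<\eps$ on level $-m+r$ for $r<k$ --- but these are features of the averaged construction, not consequences of the three properties you isolated, so the reduction cannot be quoted as a black box and the boundary cases must be argued explicitly.
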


\begin{proof}[Proof of Lemma \ref{lem:tower-Z}]
 To prove (\ref{lem:tower-Z-1}), we apply Lemma \ref{lem:marker-Z} to obtain $Z$, and then define, for $l = 0, \ldots, 2d+2$, 
 \[
  Z^{(l)} = \calpha_{ (2(m-k)+1) l + (m-k) + 1 } (Z) \; . 
 \]
 Condition (\ref{lem:tower-Z-a}) is satisfied because $\calpha_{-m}(\overline{Z}^{(l)} ), \ldots, \calpha_{0}(\overline{Z}^{(l)} ) , \ldots, \calpha_{m}(\overline{Z}^{(l)} ) $ are images of $\calpha_{-m}(\overline{Z}), \ldots, \calpha_{0}(\overline{Z}) , \ldots, \calpha_{m}(\overline{Z})$ under the homeomorphism $\calpha_{ (2(m-k)+1) l + (m-k) + 1 }$, and Condition (\ref{lem:tower-Z-b}) is verified by calculating the set of indices, as follows:
 \begin{align*}
  & \bigcup_{l = 0} ^{2d+2} \bigcup_{i = -(m-k)} ^{m-k} \calpha_i (Z^{(l)}) \\
  = & \bigcup \left\{ \calpha_i (Z) \ |\ i \in \Z \cap \bigcup_{l = 0} ^{2d+2} \bigg( (2(m-k)+1) l + (m-k) + 1 + \big[ -(m-k) , (m-k) \big] \bigg) \right\} \\
  = & \bigcup \left\{ \calpha_i (Z) \ |\ i \in \Z \cap \bigcup_{l = 0} ^{2d+2} \big[ (2(m-k)+1) l  + 1  , (2(m-k)+1) (l + 1) \big] \right\} \\
  = & \bigcup \left\{ \calpha_i (Z) \ |\ i \in \Z \cap \big[ 1  , (2(m-k)+1) (2d + 3) \big] \right\} \\
  \supset & \bigcup \left\{ \calpha_i (Z) \ |\ i \in \Z \cap \big[ 1  , (d+1)(4m+1) \big] \right\}  \supset  K  \; ,
 \end{align*}
 where the second-to-last step uses the inequality 
 \[
  (d+1)(4m+1) \leq (2d+3) (2(m-k) + 1) \; 
 \]
 which is a direct outcome of the definition of $m$. 
 
 In order to prove (\ref{enum:tower-decay-Z}), we set $k' = k  \lceil \frac{1}{\varepsilon} \rceil $ and $ K' = \bigcup_{i = -k'}^{k'} \calpha_i (K)$, and apply (\ref{lem:tower-Z-1}) with $k'$ and $K'$ in place of $k$ and $K$ to obtain open sets $Z^{(0)}, \ldots, Z^{(2d+2)} \subset X$ which satisfy (\ref{lem:tower-Z-a}) and (\ref{lem:tower-Z-b}) for $m$, $k'$ and $K'$ (and thus automatically for $k$ and $K$, too, as $k \leq k'$ and $K \subset K'$). Pick a partition of unity 
 $$
 \big \{ p^{(\infty)}\big \} \cup \big\{ p^{(l)}_j \ \mid \ l=0, \ldots, 2d+2 ; j \in \Z \cap [-(m-k'), m-k'] \big\} \subset C(X^+)_{+,\leq 1}
 $$
  such that $\mathrm{supp}(p^{(l)}_j) \subset  \calpha_{j}( Z^{(l)} )$ for all possible indices and $\mathrm{supp}(p^{(\infty)}) \subset X^+ \smallsetminus K'$. We set $p^{(l)}_j = 0 $ for all $l=0, \ldots, 2d+2$ and $j \in \Z \smallsetminus [-(m-k'), m-k']$. 
 
 Notice that the family $\{ p^{(l)}_j \}_{l=0, \ldots, 2d+2 ; j \in \Z}$ already satisfies (\ref{lem:tower-Z-c}) and (\ref{lem:tower-Z-d}). In order to produce elements $\{ \mu^{(l)}_j \}_{l=0, \ldots, 2d+2 ; j \in \Z}$ which also satisfy (\ref{lem:tower-Z-e}), we apply an averaging procedure to $\{ p^{(l)}_j \}_{j \in \Z}$ over a large F{\o}lner set, for each $l \in \{0, \ldots, 2d+2\}$, as follows. For any $l \in \{0, \ldots, 2d+2\}$ and $j \in \Z$, set 
 \[
  \mu_j^{(l)} = \frac{1}{2 k' +1} \sum_{i = -k'}^{k'} p^{(l)}_{j+i} \circ \calpha_{i} \; .
 \]
 We check that the family $\{ \mu^{(l)}_j \}_{l=0, \ldots, 2d+2 ; j \in \Z} \subset C_0(X)_{+, \leq 1}$ satisfies the desired conditions:
 \begin{enumerate}
  \item[(\ref{lem:tower-Z-c})] For any $j \in \{-m, \ldots, m\}$, we have
  \[
   \mathrm{supp} \left( \mu^{(l)}_j \right) \subseteq \bigcup_{i = -k'}^{k'} \mathrm{supp} \left( p^{(l)}_{j+i} \circ \calpha_{i} \right) \subseteq \bigcup_{i = -k'}^{k'}  \calpha_{-i} \left( \calpha_{j+i} \left( Z^{(l)} \right) \right) = \calpha_{j} \left( Z^{(l)} \right) \, ,
  \]
  while for any $j \in \Z \smallsetminus [-m, m]$ and  any $i \in \{-k', \ldots, k'\}$, we have $j+i \in \Z \smallsetminus [-(m-k'), m-k']$, whence 
  \[
   \mu_j^{(l)} = \frac{1}{2 k' +1} \sum_{i = -k'}^{k'} p^{(l)}_{j+i} \circ \calpha_{i} = 0 \; .
  \]
  \item[(\ref{lem:tower-Z-d})] For any $x \in K$,
  \begin{align*}
   \sum_{l=0}^{2d+2} \sum_{j=-m}^{m} \mu_j^{(l)}(x) & = \sum_{l=0}^{2d+2} \sum_{j\in \Z} \mu_j^{(l)}(x) & \\
   & = \frac{1}{2 k' +1} \sum_{l=0}^{2d+2} \sum_{j \in \Z}  \sum_{i = -k'}^{k'} p^{(l)}_{j+i} \left( \calpha_{i} (x) \right) & \\
   & = \frac{1}{2 k' +1} \sum_{i = -k'}^{k'} \sum_{l=0}^{2d+2} \sum_{j' \in \Z}  p^{(l)}_{j'} \left( \calpha_{i} (x) \right) \hspace{21mm} \Big[{\scriptstyle  j' = j+i} \Big] \\
   & = \frac{1}{2 k' +1} \sum_{i = -k'}^{k'} \left( 1 - p^{(\infty)} \left( \calpha_{i} (x) \right) \right) \hspace{10mm}  \Big[{\scriptstyle \text{partition\ of\ unity} } \Big] \\
   & = 1  \hspace{35mm}  \Big[ {\scriptstyle \calpha_{i} (x) \in K'\ \text{and} \ \mathrm{supp}(p^{(\infty)}) \subset X^+ \smallsetminus K' } \Big]  .
  \end{align*}
  \item[(\ref{lem:tower-Z-e})] For all $j \in \Z$, for all $i \in \Z \cap [0, k]$ and for all $l \in \{0, \ldots, 2d+2 \}$, 
  \begin{align*}
   & \left\| \mu_j^{(l)} \circ \calpha_i - \mu_{j-i}^{(l)} \right\| \\
   = & \ \frac{1}{2 k' +1} \left\| \sum_{i' = -k'}^{k'} p^{(l)}_{j+i'} \circ \calpha_{i'}  \circ \calpha_i - \sum_{i'' = -k'}^{k'} p^{(l)}_{j-i+i''} \circ \calpha_{i''}  \right\| \\
   = & \ \frac{1}{2 k' +1} \left\| \sum_{i' = -k'}^{k'} p^{(l)}_{j+i'} \circ \calpha_{i'+i} - \sum_{i' = -k'-i}^{k'-i} p^{(l)}_{j+i'} \circ \calpha_{i'+i}  \right\| \hspace{3mm}  \Big[{\scriptstyle i' = i'' - i \ \text{in\ the\ second\ sum} } \Big] \\
   = & \ \frac{1}{2 k' +1} \left\| \sum_{i' = k'-i+1}^{k'} p^{(l)}_{j+i'} \circ \calpha_{i'+i} - \sum_{i' = -k'-i}^{-k'-1} p^{(l)}_{j+i'} \circ \calpha_{i'+i}  \right\|  \hspace{1mm}  \Big[{\scriptstyle \text{cancelling\ identical\ terms} } \Big] \\
   \leq & \ \frac{1}{2 k' +1} ( (k' - (k' -i')) + (-k' - (-k'-i) ) \\
   = & \ \frac{2 i}{2 k' +1} \\
   \leq & \ \frac{2 k}{2 k \lceil \frac{1}{\varepsilon} \rceil +1} \\
   \leq & \ \varepsilon \, .
  \end{align*}
  The case of $i \in \Z \cap [-k, 0]$ is similar. 
 \end{enumerate}
 Therefore $\{ Z^{(l)}\}_{l=0, \ldots, 2d+2}$ and $\{ \mu^{(l)}_j \}_{l=0, \ldots, 2d+2 ; j \in \Z}$ satisfy the conditions in (\ref{enum:tower-decay-Z}). 
\end{proof}

\begin{Rmk}\label{rmk:Rokhlin-towers}
We recall from \cite{HWZ} that the definition of Rokhlin dimension (with single towers) requires the existence of positive elements $\{f_j^{(l)}\}_{j=0,1,\ldots,m-1 \, ; \, l=0,1,\ldots,d}$ such that $\alpha(f_j^{(l)}) \approx f_{j+1}^{(l)}$, for all applicable $j$ and $l$, and addition is taken modulo $m$ (so the tower is \emph{cyclic}). What we obtained here is elements $\mu_j^{(l)}$, where the index convention is $j=-m,\ldots,m$, and $\|\mu_m^{(l)}\|, \|\mu_{-m}^{(l)}\|<\eps$. Thus, in particular, we have $\alpha (\mu_{m}^{(l)}) \approx_{2\eps} \mu_{-m}^{(l)}$. One could refer to such Rokhlin towers as \emph{decaying Rokhlin towers}, and those could be used to define a variant of Rokhlin dimension. This is studied in \cite{SWZ} under the term \emph{amenability dimension}.
This kind of dimension would be comparable to the ordinary Rokhlin dimension, with a factor of $2$: any decaying Rokhlin tower is cyclic, and any cyclic tower can be made into two decaying Rokhlin towers by applying decay factors, as follows. Let $g:\Z \to [0,1]$ be the function such that $g(0) = 1$, $g(m+1) = g(-m-1) = 0$, $g$ is continued linearly in $[0,m+1] \cap \Z$ and in $[-m-1,0]$, and continued periodically to all of $\Z$. Suppose $\{f_j\}_{j=-m,\ldots,m}$ is a cyclic Rokhlin tower with tolerance $\eps$. For $j$'s outside $[-m,m] \cap \Z$, we extend the definition of $f_j$ periodically. Now, $\{g(j)f_j\}_{j \in [-m,m] \cap \Z}$ and $\{g(j+m)f_j\}_{j \in [0,2m] \cap \Z}$ are two decaying Rokhlin towers with tolerance $\eps+\frac{1}{2m}$. 

This technique is used in the proof of \cite[Theorem 4.1]{HWZ}, and is responsible for a factor of $2$ in the dimension estimates. Since in our setting we already obtain decaying Rokhlin towers, we avoid the need to repeat this trick in the proof of Theorem \ref{thm:estimate-dimnuc-Z} below. As a result, the bound we get here has a factor of $2$, as opposed to a factor of $4$ in \cite[Theorem 4.1]{HWZ}. 
\end{Rmk}

\section{The main result}
\begin{thm}\label{thm:estimate-dimnuc-Z}
 Let $X$ be a locally compact metrizable space and $\calpha \in \mathrm{Homeo}(X)$.  Then 
 \[
  \dimnuc(C_0(X) \rtimes_{\alpha} \Z) \leq 2 \big( \dim(X) \big)^2 + 6 \dim(X) +4 \; .
 \]
\end{thm}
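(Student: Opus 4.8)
The plan is to combine the three main ingredients developed in the paper—the subhomogeneous bound for short orbits (Proposition \ref{Prop:periodic points}), the decaying Rokhlin towers for long orbits (Lemma \ref{lem:tower-Z}), and the extension machinery—into a single localized decomposable approximation, then invoke Lemma \ref{Lemma:finite-dimnuc} to read off the nuclear dimension bound. By Lemma \ref{lem:separable-dimnuc} I may reduce to the case where $X$ is second countable. The key structural device is the equivariant decomposition of $X$ at a threshold $N$: for the closed invariant set $X_{\leq N}$ of points with orbit length at most $N$, we have the quotient $C_0(X) \rtimes_\alpha \Z \twoheadrightarrow C_0(X_{\leq N}) \rtimes_\alpha \Z$, whose target has $\dimnuc \leq \dr \leq \dim(X)+1$ by Proposition \ref{Prop:periodic points} applied to each of the finitely many ``period bands'' (this bound being crucially independent of $N$). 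The complementary ideal $C_0(X_{>N})$ carries the restricted action, all of whose orbits have length $> N$, so it is amenable to Lemma \ref{lem:tower-Z}.

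First I would fix a finite set $F \subset C_c(\Z, C_c(X)) \subset C_0(X) \rtimes_\alpha \Z$ (a dense subalgebra) and $\eps > 0$, and without loss of generality take $F$ to consist of elements $f u^i$ supported on $|i| \leq k$ for some fixed $k$. The threshold $N$, together with the tower parameters $m$ (length) and the tolerance in Lemma \ref{lem:tower-Z}, will be chosen at the end, large enough so that $m \geq (2d+3)k\lceil 1/\eps'\rceil - d/2 - 1$ and so that $N > (d+1)(4m+1)$ where $d = \dim(X)$. I would then lift a decomposable approximation for the quotient $C_0(X_{\leq N}) \rtimes_\alpha \Z$ (obtained from the subhomogeneous bound, which gives $\dr \leq d+1$, i.e. $d+2$ summands) through the quotient map using Lemma \ref{Lemma:lifting-decoposable-maps}, and use a quasicentral approximate unit from $C_c(X_{>N})$ (Lemma \ref{lem:quasicentral-approximate-unit}) to ``cut'' the element into a piece handled by this lift and a complementary piece supported near $X_{>N}$.

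For the piece living near $X_{>N}$, I would build an explicit decomposable approximation out of the decaying Rokhlin towers $\{\mu_j^{(l)}\}$ from Lemma \ref{lem:tower-Z}, there being $2d+3$ towers (indexed by $l = 0, \ldots, 2d+2$). The standard construction from the proof of \cite[Theorem 4.1]{HWZ}: for each tower one compresses by the $\mu_j^{(l)}$ and uses the approximate $\alpha$-invariance (condition (e) of Lemma \ref{lem:tower-Z}) to represent $f u^i$ through a finite-dimensional algebra of the form $M_{2m+1} \otimes (\text{a commutative piece})$, with the order zero structure coming from the tower being permuted by $\alpha$ to within $\eps$. Because the towers are already \emph{decaying}, no extra factor-of-two splitting is needed (this is the point of Remark \ref{rmk:Rokhlin-towers}), so each of the $2d+3$ towers contributes $\dr^{+1}$ of roughly $d+1$ via the commutative factor. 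Counting summands: the quotient piece contributes $d+2$, and the $2d+3$ towers each contribute a factor of $(d+1)$, giving in Lemma \ref{Lemma:finite-dimnuc}(4) a total of $(d+2) + (2d+3)(d+1) = 2d^2 + 6d + 5 = 2(\dim X)^2 + 6\dim(X) + 5$, which yields the stated bound after subtracting $1$.

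The main obstacle, and the most delicate bookkeeping, will be the localized gluing of the two approximations: one must ensure that the quasicentral cut controls the cross-terms so that the quotient-lift approximation and the Rokhlin-tower approximation, when summed, reproduce each $fu^i \in F$ to within $\eps$ without the decomposition ranks adding in an uncontrolled way. The quasicentrality (so that the cutting element nearly commutes with $F$) and the approximate invariance of the towers must be matched carefully against the chosen $k$ and the support conditions; this is precisely why the towers need tolerance scaling like $1/\eps$ and length growing with $k$, and why $N$ must be taken large last. The remaining steps—verifying that each constructed map is genuinely a sum of order zero contractions and that the constant count is exactly as above—are routine once the geometry of the supports is set up.
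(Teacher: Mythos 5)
Your proposal is correct and follows essentially the same route as the paper's own proof: the threshold decomposition at $Y = X_{\leq N}$ with $N = (d+1)(4m+1)$, the lifted quotient approximation cut by a quasicentral approximate unit from $C_c(X \smallsetminus Y)$, the $(2d+3)$ decaying Rokhlin towers yielding $*$-homomorphisms out of $M_{2m+1}(C_0(Z^{(l)}))$, and the identical count $(d+2) + (2d+3)(d+1) = 2d^2 + 6d + 5$ fed into Lemma \ref{Lemma:finite-dimnuc}. The only cosmetic deviations are your optional reduction to the second countable case and the phrasing about ``period bands'': the paper applies Proposition \ref{Prop:periodic points} directly to $X_{\leq N}$, since the restricted action is itself periodic (with period $N!$).
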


The metrizability condition on $X$ can be removed. See Corollary \ref{cor:nonmetrizable}. 

\begin{proof}
 We assume that $X$ is finite dimensional, otherwise there is nothing to prove. Set $\dim (X) = d$. We need to show that $\dimnuc ( C_0(X) \rtimes_\alpha \Z ) \leq 2d^2+6d+4$. Recall that $\displaystyle  C_c(X) \rtimes_{\alpha, \mathrm{alg} } \Z = \Big\{ \sum_{\mathrm{finite\ sum}} f_i u_i \ \Big| \ f_i \in C_c(X), \quad \forall i \in \Z \Big\}$ is dense in $C_0(X) \rtimes_{\alpha} \Z$. Therefore, it is enough to verify that, given a finite subset $F \subset (C_c(X) \rtimes_{\alpha, \mathrm{alg} } \Z)_{\leq 1}$ and $\varepsilon > 0$, the condition of Lemma \ref{Lemma:finite-dimnuc} holds for $F$ and $(3d+7) \varepsilon$.
 
 Since $F$ is finite, there is $k \in \N$ such that 
 \[
  F \subset \left\{ \sum_{i = -k} ^k f_i u_i \in C_c(X) \rtimes_{\alpha, \mathrm{alg} } \Z \ \bigg| \ f_i \in C_c(X) , \quad \forall i \in \{ -k, \ldots, k \} \right\} \; .
 \]
 Fix $\eps'>0$ with the property that whenever $s,t \in [0,1]$ satisfy $|s-t|<\eps'$, we have $|\sqrt{s}-\sqrt{t}|<\frac{\eps}{2k+1}$. Define:
 \begin{align*}
  m & = (2d+3) k \left \lceil \frac{1}{\eps'} \right \rceil  \\ 
  N & = (d+1) (4m+1)  \\
  Y & = X_{\leq N} = \left\{ x \in X \ \big| \ [\Z : \mathrm{Stab}(x) ] \leq N \right\} = \left\{ x \in X \ \big| \ | \Z \cdot x | \leq N \right\} \, .
 \end{align*}
 For any $n \in \N$, the set $\{ x \in X \ | \ \calpha_n(x) = x \}$ is a closed $\calpha$-invariant subset of $X$. Therefore so is $X_{\leq N}$, being a finite union of closed invariant subsets. The action of $\calpha$ on $Y$ is periodic (with period $N!$, for instance). Therefore, by Proposition \ref{Prop:periodic points}, we have 
 $$
 \dr(C_0(Y) \rtimes_{\alpha} \Z) \leq d + 1 \, ,
 $$  
and in particular,
 $$
 \dimnuc(C_0(Y) \rtimes_{\alpha} \Z) \leq d + 1 .
 $$  
 Now, consider the short exact sequence 
 \[
  0 \to C_0(X \smallsetminus Y) \rtimes \Z \overset{\iota}{\longrightarrow} C_0(X ) \rtimes \Z \overset{\pi}{\longrightarrow} C_0(Y) \rtimes \Z  \to 0 \; .
 \]
 Since $\dimnuc^{+1} ( C_0(Y) \rtimes \Z ) \leq  d+2$, we may find a piecewise contractive $(d+2)$-decomposable approximation for $(\pi (F), \varepsilon)$:
 \[
  \xymatrix{\displaystyle
   C_0(Y) \rtimes \Z \ar[dr]_{\psi_Y = \bigoplus_{l=0} ^{d + 1} \psi_Y^{(l)} \ \ \ } \ar@{.>}[rr]^{\id} &  & C_0(Y) \rtimes \Z \\
   \displaystyle & A_{Y} = \bigoplus_{l=0} ^{d + 1} A_Y^{(l)} \ar[ur]_{\ \ \ \ \varphi_Y = \sum_{l=0} ^{d + 1} \varphi_Y^{(l)} } &
  }
 \]
 By Lemma \ref{Lemma:lifting-decoposable-maps}, we may lift $\varphi_Y$ to a piecewise contractive $(d+2)$-decomposable completely positive map 
 \[
  \widetilde{\varphi}_Y = \sum_{l=0} ^{d + 1} \widetilde{\varphi}_Y^{(l)} : A_Y  \to C_0(X ) \rtimes \Z \; .
 \]
 By \cite[Proposition 1.4]{winter-zacharias}, there exists $\delta > 0$ such that for any positive contraction $e \in C_0(X ) \rtimes \Z$, if $\left\| [ (1 - e), \widetilde{\varphi}_Y^{(l)} (a) ] \right\| \leq \delta \|a\| $ for any $a \in A_Y$ and for any $l \in \{0, \ldots, d+1\}$ then there are completely positive contractive order zero maps $\widehat{\varphi}_Y^{(l)} : A_Y \to C_0(X ) \rtimes \Z $ such that 
 \[
  \left\| \widehat{\varphi}_Y^{(l)} (a) - (1 - e)^\frac{1}{2} \widetilde{\varphi}_Y^{(l)} (a) (1 - e)^\frac{1}{2} \right\| \leq \varepsilon \| a \|
 \]
 for all $a \in A_Y$ and for all $l \in \{0, \ldots, d+1\}$. 
 
 By Lemma \ref{lem:quasicentral-approximate-unit}, there is a quasicentral approximate unit for $C_0(X \smallsetminus Y) \rtimes \Z \subseteq C_0(X ) \rtimes \Z$ which is contained in $C_c(X \smallsetminus Y)_{+, \leq 1}$. Thus, we may choose an element $e \in C_c(X \smallsetminus Y)_{+, \leq 1}$ which satisfies:
 \begin{enumerate}
  \item $\left\| [ (1 - e), \widetilde{\varphi}_Y^{(l)} (a) ] \right\| \leq \delta \|a\| $ for any $a \in A_Y$ and for any $l \in \{0, \ldots, d+1\}$
  \item $\left\| e^\frac{1}{2} \: b \: e^\frac{1}{2} + (1-e)^\frac{1}{2} \: b \: (1-e)^\frac{1}{2} - b \right\| \leq \varepsilon $ for any $b \in F$
  \item $\left\| (1-e)^\frac{1}{2} ( (\widetilde{\varphi}_Y \circ \psi_Y \circ \pi) (b) - b ) (1-e)^\frac{1}{2} \right\| \leq \varepsilon $ for any $b \in F$.
 \end{enumerate}
 Therefore, we can find maps $\widehat{\varphi}_Y^{(l)}$ as described in the previous paragraph, and sum them up to obtain a piecewise contractive $(d+2)$-decomposable completely positive map 
 \[
  \widehat{\varphi}_Y = \sum_{l=0} ^{d + 1} \widehat{\varphi}_Y^{(l)}  : A_Y \to C_0(X ) \rtimes \Z
 \]
 such that for any $a \in A_Y$,
 \[
  \left\| \widehat{\varphi}_Y (a) - (1 - e)^\frac{1}{2} \widetilde{\varphi}_Y (a) (1 - e)^\frac{1}{2} \right\| \leq (d+2) \varepsilon \| a \| \; .
 \]
 
 \begin{clm}\label{clm:estimate-1:thm:estimate-dimnuc-Z}
  The diagram
  \[
  \xymatrix{\displaystyle
   C_0(X) \rtimes \Z \ar[dr]_{\psi_Y \circ \pi \ \ } \ar@{.>}[rr]^{\id} &  & C_0(X) \rtimes \Z \\
   \displaystyle & A_{Y}  \ar[ur]_{\ \widehat{\varphi}_Y  } &
  }
 \]
 commutes on $(1 - e)^\frac{1}{2} F (1 - e)^\frac{1}{2}$ up to errors bounded by $(d+3)  \varepsilon$.
 \end{clm}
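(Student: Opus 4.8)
The plan is to unwind the composition $\widehat\varphi_Y \circ \psi_Y \circ \pi$ applied to an element of the form $(1-e)^\frac{1}{2} b (1-e)^\frac{1}{2}$ with $b \in F$, and to chain together the three ingredients that are already in place: the fact that $\pi$ annihilates $e$, the defining estimate for the order zero perturbation $\widehat\varphi_Y$, and condition (3) on the choice of $e$.

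First I would exploit that $e \in C_c(X \smallsetminus Y) \subseteq C_0(X \smallsetminus Y) \rtimes \Z = \ker \pi$, so that $\pi(e) = 0$ and hence $\pi((1-e)^\frac{1}{2}) = 1$ in the multiplier algebra of $C_0(Y) \rtimes \Z$. This gives $\pi\big((1-e)^\frac{1}{2} b (1-e)^\frac{1}{2}\big) = \pi(b)$, so that feeding $(1-e)^\frac{1}{2} b (1-e)^\frac{1}{2}$ through the top row of the diagram is the same as evaluating $\widehat\varphi_Y(\psi_Y(\pi(b)))$; it is precisely this identity that allows us to compress $F$ by $(1-e)^\frac{1}{2}$ without losing track of the quotient data.

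Next, writing $a = \psi_Y(\pi(b))$ and noting $\|a\| \leq 1$ since $\psi_Y$ is contractive and $\|b\| \leq 1$, the defining property of $\widehat\varphi_Y$ established just before the claim yields $\big\|\widehat\varphi_Y(a) - (1-e)^\frac{1}{2}\widetilde\varphi_Y(a)(1-e)^\frac{1}{2}\big\| \leq (d+2)\varepsilon$. Condition (3) reads exactly $\big\|(1-e)^\frac{1}{2}\big((\widetilde\varphi_Y \circ \psi_Y \circ \pi)(b) - b\big)(1-e)^\frac{1}{2}\big\| \leq \varepsilon$, i.e. $(1-e)^\frac{1}{2}\widetilde\varphi_Y(a)(1-e)^\frac{1}{2}$ lies within $\varepsilon$ of $(1-e)^\frac{1}{2} b (1-e)^\frac{1}{2}$. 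A single triangle inequality then gives $\big\|\widehat\varphi_Y(\psi_Y(\pi(b))) - (1-e)^\frac{1}{2} b (1-e)^\frac{1}{2}\big\| \leq (d+2)\varepsilon + \varepsilon = (d+3)\varepsilon$, which is the asserted bound.

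I do not expect a genuine obstacle: the claim is bookkeeping that assembles the approximation estimate for the order zero perturbation together with condition (3). The one point meriting care is the reduction $\pi((1-e)^\frac{1}{2} b (1-e)^\frac{1}{2}) = \pi(b)$, which is where the choice of $e$ inside the ideal $\ker \pi$ is essential and which makes the compressed element behave correctly under the quotient map.
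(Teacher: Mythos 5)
Your proposal is correct and follows essentially the same route as the paper's proof: reduce via $\pi(e)=0$ to $(\widehat\varphi_Y\circ\psi_Y\circ\pi)(b)$, then split by the triangle inequality into the $(d+2)\varepsilon$ perturbation estimate for $\widehat\varphi_Y$ (using $\|\psi_Y(\pi(b))\|\leq 1$) and the $\varepsilon$ bound from condition (3) on $e$. No gaps.
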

 Indeed, observe that for any $b \in F$, 
 \begin{align*}
  & \left\| (\widehat{\varphi}_Y \circ \psi_Y \circ \pi ) \left( (1 - e)^\frac{1}{2} b (1 - e)^\frac{1}{2} \right) - (1 - e)^\frac{1}{2} b (1 - e)^\frac{1}{2} \right\| & \\
  = & \  \left\| (\widehat{\varphi}_Y \circ \psi_Y \circ \pi ) \left( b \right) - (1 - e)^\frac{1}{2} b (1 - e)^\frac{1}{2} \right\| & \Big[\scriptscriptstyle{ \pi(e) = 0 }\Big] \\
  \leq & \ \left\| (\widehat{\varphi}_Y \circ \psi_Y \circ \pi ) ( b ) - (1 - e)^\frac{1}{2} \cdot ( \widetilde{\varphi}_Y \circ \psi_Y \circ \pi ) ( b ) \cdot (1 - e)^\frac{1}{2} \right\| \\
  & + \left\| (1 - e)^\frac{1}{2} \big( (\widetilde{\varphi}_Y \circ \psi_Y \circ \pi ) ( b ) - b \big) (1 - e)^\frac{1}{2} \right\| \\
  \leq & \ (d+2) \varepsilon \left\| (\psi_Y \circ \pi ) ( b ) \right\| + \varepsilon \\
  \leq & \ (d+3)  \varepsilon .  & \Big[\scriptstyle{\|b\| \leq 1 }\Big] 
 \end{align*}
 This proves the claim.

 The next step is to find an approximation for $ e^\frac{1}{2} F e^\frac{1}{2} $. Define 
 $$
 K = \mathrm{supp}(e) \subset X \smallsetminus Y
 \, .
 $$
Our choice of $m$ allows us to apply Lemma \ref{lem:tower-Z}(\ref{enum:tower-decay-Z}) to $X \smallsetminus Y$, the compact subset $K$ and the parameters $k$, $m$ and $\varepsilon'$. This produces open subsets $Z^{(0)}, \ldots, Z^{(2d+2)} \subset X$ and functions $\{ \mu^{(l)}_j \}_{l=0, \ldots, 2d+2 ; j \in \Z} \subset C_c(X)_{+, \leq 1}$ satisfying the conditions of the lemma (with $\eps'$ in place of $\eps$).  
 
 Consider the regular representation of $C_0(X) \rtimes_{\alpha} \Z$ on the Hilbert module $E=\ell^2(\Z,C_0(X))$. That is, we embed $C_0(X)$ in $B(E)$ by $(f \cdot \xi) (n) = (f \circ \calpha_n)\xi (n)$, where $f \in  C_0(X)$ and $\xi \in E$, and we identify the canonical unitary $u$ with the bilateral shift operator in $B(E)$.

 Let $Q$  be the projection onto the subspace $\ell^2(\{-m, \ldots ,m\}, C_0(X) ) \subseteq E$. So, $b \mapsto  Q b Q$ is a completely positive map from  $C_0(X)  \rtimes_\alpha \Z $ to $C_0(X) \otimes B \big(\ell^2(\{-m, \ldots ,m\})\big)$. Let 
 \[
  \left\{ E_{i j} \right\}_{i,j \in \{-m, \ldots, m\}} \in B \big(\ell^2(\{-m, \ldots ,m\})\big)
 \]
 be the canonical linear basis consisting of matrices with $1$ at the $(i,j)$-th entry and $0$ elsewhere. 
 
 For any $l \in \{0, \ldots, 2d+2\}$, we define 
 \[
  A_{X \smallsetminus Y}^{(l)} = C_0(Z^{(l)}) \otimes B \big(\ell^2(\{-m, \ldots ,m\})\big) \cong M_{2m + 1} (C_0(Z^{(l)})) \; .
 \]
 By construction, we have $\mathrm{supp}(\mu^{(l)}_j \circ \calpha_j) \subset Z^{(l)}$ for any $j\in \Z$. Thus, we may define
 \[
  \mu^{(l)} = \sum_{j = -m}^{m} \left( \mu^{(l)}_j \circ \alpha_j \right) \otimes E_{jj} \in \left( A_{X \smallsetminus Y}^{(l)} \right)_{+, \leq 1} \; .
 \]
 Since $C_0(Z^{(l)})$ is an ideal in $C_0(X)$, it follows that $A_{X \smallsetminus Y}^{(l)}$ is an ideal in $C_0(X) \otimes B \big(\ell^2(\{-m, \ldots ,m\})\big)$. We can thus define completely positive contractions
 \[
  \psi_{X \smallsetminus Y}^{(l)} : C_0(X) \rtimes_{\alpha} \Z \to A_{X \smallsetminus Y}^{(l)} 
 \]
 by 
 $$
 \psi_{X \smallsetminus Y}^{(l)}(b) = \sqrt{\mu^{(l)}} Q b Q \sqrt{\mu^{(l)}} \; .
$$
 Next, we define a linear map
 \[
  \varphi_{X \smallsetminus Y}^{(l)} : A_{X \smallsetminus Y}^{(l)} \to C_0(X) \rtimes_{\alpha} \Z  
 \]
 by setting, for all $f \in C_0(Z^{(l)})$ and for all $i,j \in \{-m, \ldots, m\}$, 
 \[
  \varphi_{X \smallsetminus Y}^{(l)} ( f \otimes E_{ij} ) =  (f \circ \calpha_{-i}) u_{i-j} \; .
 \]
 We claim that $\varphi_{X \smallsetminus Y}^{(l)}$ is a $*$-homomorphism. Indeed, for any $f, f' \in C_0(Z^{(l)})$ and $i,j , i',j' \in \{-m, \ldots, m\}$, we have
 \begin{align*}
  & \varphi_{X \smallsetminus Y}^{(l)} ( ( f \otimes E_{ij} )^* ) &=\ & \varphi_{X \smallsetminus Y}^{(l)} ( \overline{f} \otimes E_{ji} ) &=\ & (\overline{f} \circ \calpha_{-j}) u_{j-i}  & \\
  & &=\ & u_{j-i} (\overline{f} \circ \calpha_{-i}) &=\ & \left( (f \circ \calpha_{-i}) u_{i-j} \right)^* & = \varphi_{X \smallsetminus Y}^{(l)} ( ( f \otimes E_{ij} ) )^*
 \end{align*}
 and, using the fact that if $j \not= i'$ then 
  \[
   \mathrm{supp}(f \circ \alpha_{-j}) \cap \mathrm{supp}(f \circ \alpha_{-i'}) \subset \alpha_{j}(Z^{(l)}) \cap \alpha_{i'}(Z^{(l)}) = \emptyset \; ,
  \]
  we check that
 \begin{align*}
  \varphi_{X \smallsetminus Y}^{(l)} ( f \otimes E_{ij} ) \cdot \varphi_{X \smallsetminus Y}^{(l)} ( f' \otimes E_{i'j'}) & = (f \circ \calpha_{-i}) u_{i-j} (f' \circ \calpha_{-i'}) u_{i'-j'} \\
  & = \Big( \big( (f \circ \calpha_{-j}) \cdot (f' \circ \calpha_{ - i'}) \big) \circ \calpha_{j-i} \Big) u_{i-j} u_{i'-j'} \\
  & = \begin{cases}
       \big ( (f \cdot f') \circ \calpha_{-i} \big) u_{i-j'} & , \ \text{if}\ j = i' \\
       0 & , \ \text{if}\  j \not= i'
      \end{cases} \\
  & = \varphi_{X \smallsetminus Y}^{(l)} \big( ( f \otimes E_{ij} ) \cdot ( f' \otimes E_{i'j'}) \big) \; .
 \end{align*}
 This proves the claim. 
 
 \begin{clm}\label{clm:estimate-2:thm:estimate-dimnuc-Z}
  The diagram
  \[
  \xymatrix{\displaystyle
   C_0(X) \rtimes \Z \ar[dr]_{\psi_{X \smallsetminus Y} = \bigoplus_{l=0} ^{2d + 2} \psi_{X \smallsetminus Y}^{(l)} \ \ \ } \ar@{.>}[rr]^{\id} &  & C_0(X) \rtimes \Z \\
   \displaystyle & A_{X \smallsetminus Y} = \bigoplus_{l=0} ^{2d + 2} A_{X \smallsetminus Y}^{(l)}  \ar[ur]_{\ \ \  \varphi_{X \smallsetminus Y} = \sum_{l=0} ^{2d + 2} \varphi_{X \smallsetminus Y}^{(l)}  } &
  }
 \]
 commutes on $e^\frac{1}{2} F e^\frac{1}{2}$ up to errors bounded by $(2d+3)\varepsilon $.
 \end{clm}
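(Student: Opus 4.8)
The plan is to compute $\varphi_{X\smallsetminus Y}\circ\psi_{X\smallsetminus Y}$ explicitly on a typical element of $e^{\frac12}Fe^{\frac12}$ and then read off the error from conditions (\ref{lem:tower-Z-a}), (\ref{lem:tower-Z-d}) and (\ref{lem:tower-Z-e}) of Lemma \ref{lem:tower-Z}. First I would reduce to monomials: writing $b=\sum_{i=-k}^{k} f_iu_i\in F$ and using the commutation relation $u_ie^{\frac12}=(e^{\frac12}\circ\calpha_{-i})u_i$, one gets $e^{\frac12}be^{\frac12}=\sum_{i=-k}^{k} g_iu_i$ with $g_i=e^{\frac12}f_i(e^{\frac12}\circ\calpha_{-i})$. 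Since $\|f_i\|=\|E(bu_{-i})\|\le\|b\|\le1$, each $g_i$ is a contraction supported in $K=\supp(e)$.

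The central computation is to evaluate $\varphi^{(l)}_{X\smallsetminus Y}\big(\psi^{(l)}_{X\smallsetminus Y}(g_iu_i)\big)$. In the regular representation one has $Q(g_iu_i)Q=\sum_{n}(g_i\circ\calpha_n)\otimes E_{n,n-i}$, the sum running over those $n\in\{-m,\dots,m\}$ with $n-i\in\{-m,\dots,m\}$. Conjugating by $\sqrt{\mu^{(l)}}$ and applying the $*$-homomorphism $\varphi^{(l)}_{X\smallsetminus Y}$, the three $\calpha$-twists telescope: $(\sqrt{\mu_n^{(l)}}\circ\calpha_n)\circ\calpha_{-n}=\sqrt{\mu_n^{(l)}}$, $(g_i\circ\calpha_n)\circ\calpha_{-n}=g_i$ and $(\sqrt{\mu_{n-i}^{(l)}}\circ\calpha_{n-i})\circ\calpha_{-n}=\sqrt{\mu_{n-i}^{(l)}}\circ\calpha_{-i}$, since $\calpha_{n-i}\circ\calpha_{-n}=\calpha_{-i}$. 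Summing over $l$ yields $\varphi_{X\smallsetminus Y}\psi_{X\smallsetminus Y}(g_iu_i)=S_i\,g_iu_i$, where $S_i=\sum_{l=0}^{2d+2}\sum_{n}\sqrt{\mu_n^{(l)}}\,(\sqrt{\mu_{n-i}^{(l)}}\circ\calpha_{-i})$. It then remains to show that $\|(S_i-1)g_i\|$ is small for each $i$.

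Next I would estimate $|S_i(x)-1|$ for $x\in K$. On $K$ the partition identity (\ref{lem:tower-Z-d}) gives $\sum_{l,n}\mu_n^{(l)}(x)=1$, so $S_i(x)-1=\sum_{l,n}\sqrt{\mu_n^{(l)}(x)}\big(\sqrt{\mu_{n-i}^{(l)}(\calpha_{-i}(x))}-\sqrt{\mu_n^{(l)}(x)}\big)$. The decay estimate (\ref{lem:tower-Z-e}), which holds for every $j\in\Z$, gives $|\mu_{n-i}^{(l)}(\calpha_{-i}(x))-\mu_n^{(l)}(x)|<\varepsilon'$ because $|i|\le k$; the defining property of $\varepsilon'$ then bounds each square-root difference by $\varepsilon/(2k+1)$. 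Finally, the disjointness (\ref{lem:tower-Z-a}) together with the support condition (\ref{lem:tower-Z-c}) ensures that for fixed $x$ and each $l$ at most one index $n$ satisfies $\mu_n^{(l)}(x)\ne0$, so $\sum_{l,n}\sqrt{\mu_n^{(l)}(x)}\le 2d+3$. Combining these, $|S_i(x)-1|\le(2d+3)\,\varepsilon/(2k+1)$, whence $\|(S_i-1)g_i\|\le(2d+3)\varepsilon/(2k+1)$, using $\|g_i\|\le1$ and $\supp(g_i)\subseteq K$. Summing over the $2k+1$ values of $i$ gives $\big\|\varphi_{X\smallsetminus Y}\psi_{X\smallsetminus Y}(e^{\frac12}be^{\frac12})-e^{\frac12}be^{\frac12}\big\|\le(2d+3)\varepsilon$, as claimed.

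The main obstacle I anticipate is the bookkeeping of the $\calpha$-shifts in the middle computation and, relatedly, ensuring that the truncation inherent in the compression $Q(\cdot)Q$ does not spoil the estimate. Terms with $n-i\notin\{-m,\dots,m\}$ are dropped by $Q$, but condition (\ref{lem:tower-Z-e}) (applied with $\mu^{(l)}_{n-i}=0$) forces the corresponding $\mu_n^{(l)}$ to be uniformly small; thus these boundary contributions are absorbed into the same $\varepsilon/(2k+1)$ bound rather than requiring separate treatment.
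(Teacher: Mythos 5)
Your proof is correct and follows essentially the same route as the paper's: reduce to monomials $g_i u_i = e^{\frac12} f_i (e^{\frac12}\circ\calpha_{-i}) u_i$ via the conditional expectation, compute $\varphi_{X\smallsetminus Y}\circ\psi_{X\smallsetminus Y}$ on them in the regular representation so that the error is $(S_i-1)g_iu_i$, and bound $|S_i-1|$ on $K$ by $(2d+3)\varepsilon/(2k+1)$ using the partition-of-unity condition, the choice of $\varepsilon'$, and the disjointness of the tower supports, before summing over the $2k+1$ monomials. Your handling of the truncation by $Q$ (absorbing boundary terms via condition (\ref{lem:tower-Z-e}) applied with $\mu^{(l)}_{n-i}=0$) is exactly the paper's device of formally extending the summation over all of $\Z$ with vanishing summands.
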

 We will prove this claim after we complete the main body of the proof. \\

 To finish the proof,
set $\psi \colon C_0(X) \rtimes \Z \to  A_{Y} \oplus A_{X \smallsetminus Y}$ to be   
$$\psi (b) = ( \psi_{Y} \circ \pi ) \left( (1 - e)^\frac{1}{2} b (1 - e)^\frac{1}{2} \right) \oplus \psi_{X \smallsetminus Y} \left( e^\frac{1}{2} b e^\frac{1}{2} \right)
\, ,
$$
 and 
 consider the diagram
 \[
  \xymatrix{\displaystyle
   C_0(X) \rtimes \Z \ar[dr]_{\psi  \ } \ar@{.>}[rr]^{\id} &  & C_0(X) \rtimes \Z \\
   \displaystyle & A_{Y} \oplus A_{X \smallsetminus Y}  \ar[ur]_{\ \ \  \varphi = \widehat{\varphi}_{Y} + \varphi_{X \smallsetminus Y}  } &
  }
 \]
 
 Observe that:
 \begin{enumerate}
  \item $\psi$ is completely positive and contractive.
  \item $\varphi_{Y}$ is a sum of $(d + 2)$-many order zero contractions, and $\varphi_{X \smallsetminus Y}$ is a sum of $(2d+3)$-many $*$-homomorphisms (which, in particular, are order zero contractions).
  \item For all $b \in F$, we compute, using the bounds given by Claims \ref{clm:estimate-1:thm:estimate-dimnuc-Z} and \ref{clm:estimate-2:thm:estimate-dimnuc-Z} and using the properties of $e$:
  \begin{align*}
   & \left\|\varphi(\psi(b)) - b \right\| \\
   \leq & \ \left\| ( \widehat{\varphi}_Y \circ \psi_Y \circ \pi ) \left( (1 - e)^\frac{1}{2} b (1 - e)^\frac{1}{2} \right) - (1 - e)^\frac{1}{2} b (1 - e)^\frac{1}{2} \right\| \\
   & + \left\|  ( \varphi_{X \smallsetminus Y} \circ \psi_{X \smallsetminus Y} ) \left( e^\frac{1}{2} b e^\frac{1}{2} \right) -  e^\frac{1}{2} b e^\frac{1}{2} \right\| \\
   & + \left\| (1 - e)^\frac{1}{2} b (1 - e)^\frac{1}{2} + e^\frac{1}{2} b e^\frac{1}{2} - b \right\| \\
   \leq & \ (d+3) \varepsilon + (2d+3) \varepsilon + \varepsilon \\
   = & \ (3d+7) \varepsilon \, .
  \end{align*}
  \item $\dimnuc(A_{Y}) = 0$ and 
  \[
   \dimnuc(A_{X \smallsetminus Y}) = \max_{l \in \{0, \ldots, 2d+2\} } \dimnuc(C_0(Z^{(l)})) \leq \dimnuc(C_0(X)) = d \, ,
  \]
  and thus 
  \begin{align*}
   & (\dimnuc(A_{Y}) + 1) (d+2) + ( \dimnuc(A_{X \smallsetminus Y}) + 1) (2d+3) \\
   \leq & \ (d+2) + (d+1)(2d+3) \\
   = & 2d^2+6d+5 \; .
  \end{align*}
 \end{enumerate}
 Therefore the result follows from Lemma \ref{Lemma:finite-dimnuc}. 
\end{proof}

\begin{proof}[Proof of Claim \ref{clm:estimate-2:thm:estimate-dimnuc-Z}]
 First we observe that for any $l \in \{0 , \ldots, 2d+2 \}$, for any $ f \in C_0(X) $ and for any $i \in \Z$, 
 \begin{align*}
  & \psi_{X \smallsetminus Y}^{(l)} ( f u_i ) \\
  = & \ \sqrt{\mu^{(l)}} Q f u_i Q \sqrt{\mu^{(l)}} \\
  = & \ \sqrt{\mu^{(l)}} \left( \sum_{ \begin{aligned}
   j \in \{-m, \ldots , m \} \cap \\
   \{-m +i, \ldots , m +i \}
  \end{aligned} }  (f \circ \calpha_j ) \otimes E_{j, j-i} \right) \sqrt{\mu^{(l)}} \\
  = & \ \sum_{ \begin{aligned}
   j \in \{-m, \ldots , m \} \cap \\
   \{-m +i, \ldots , m +i \}
  \end{aligned} } \left( \left( f \cdot \sqrt{\mu^{(l)}_{j} } \cdot  \left( \sqrt{\mu^{(l)}_{j-i}} \circ \calpha_{-i} \right) \right) \circ \calpha_j \right) \otimes E_{j, j-i}  \; .
 \end{align*}
 Using the fact that $\mu^{(l)}_j = 0$ whenever $|j| > m$, we may enlarge the domain of summation to $\Z$, with the understanding that the summand is $0$ whenever $\mu^{(l)}_j = 0$ or $\mu^{(l)}_{j-i} = 0$. Thus 
 \begin{align*}
  \varphi_{X \smallsetminus Y}^{(l)} \left( \psi_{X \smallsetminus Y}^{(l)} ( f u_i ) \right) = & \ \sum_{ j \in \Z } \left( \left( f \cdot \sqrt{\mu^{(l)}_{j} } \cdot  \left( \sqrt{\mu^{(l)}_{j-i}} \circ \calpha_{-i} \right) \right) \circ \calpha_j \circ \calpha_{-j} \right) u_{j - (j-i)} \\
  = & \ \left( \sum_{ j \in \Z } \sqrt{\mu^{(l)}_{j} } \cdot  \left( \sqrt{\mu^{(l)}_{j-i}} \circ \calpha_{-i} \right) \right) \cdot f u_i \; .
 \end{align*}
 If, in addition, $\mathrm{supp}(f) \subset K$, then
 \begin{align*}
  & \left( \sum_{l=0} ^{2d + 2} \varphi_{X \smallsetminus Y}^{(l)} \left( \psi_{X \smallsetminus Y}^{(l)} ( f u_i ) \right) \right) - f u_i \\
  = & \ \left( \sum_{l=0} ^{2d + 2} \sum_{ j \in \Z } \sqrt{\mu^{(l)}_{j} } \cdot  \left( \sqrt{\mu^{(l)}_{j-i}} \circ \calpha_{-i} \right) \right) \cdot f u_i - \left( \sum_{l=0} ^{2d + 2} \sum_{ j \in \Z } {\mu^{(l)}_{j} }  \right) \cdot f u_i \\
  = & \ \left( \sum_{l=0} ^{2d + 2} \sum_{ j \in \Z } \sqrt{\mu^{(l)}_{j} } \cdot  \left( \sqrt{\mu^{(l)}_{j-i}} \circ \calpha_{-i} - \sqrt{\mu^{(l)}_{j} } \right) \right) \cdot f u_i
 \end{align*}
 Recall that $\left\| \mu_{j-i}^{(l)} \circ \calpha_{-i} - \mu_{j}^{(l)} \right\| < \eps'$ for all $j \in \Z$, for all $i \in \Z \cap [-k, k]$ and for all $l \in \{0, \ldots, 2d+2 \}$, and $\eps'$ was chosen so that if $s,t \in [0,1]$ satisfy $|s-t|<\eps'$ then $|\sqrt{s}-\sqrt{t}|<\frac{\eps}{2k+1}$. Thus, it follows that $\left\| \sqrt{ \mu_{j-i}^{(l)} } \circ \calpha_{-i} - \sqrt{ \mu_{j}^{(l)} } \right\| < \frac{\varepsilon}{2k+1}$ for all $j \in \Z$, for all $i \in \Z \cap [-k, k]$ and for all $l \in \{0, \ldots, 2d+2 \}$. Consequently,
 \begin{align*}
  & \left\| \left( \sum_{l=0} ^{2d + 2} \varphi_{X \smallsetminus Y}^{(l)} \left( \psi_{X \smallsetminus Y}^{(l)} ( f u_i ) \right) \right) - f u_i \right\| \\
  = & \ \left\| \left( \sum_{l=0} ^{2d + 2} \sum_{ j \in \Z } \sqrt{\mu^{(l)}_{j} } \cdot  \left( \sqrt{\mu^{(l)}_{j-i}} \circ \calpha_{-i} - \sqrt{\mu^{(l)}_{j} } \right) \right) \cdot f u_i \right\| \\
  \leq & \ \sup_{x\in K} \left\{ \sum_{l=0} ^{2d + 2} \sum_{ j \in \Z } \sqrt{\mu^{(l)}_{j} (x) } \cdot  \left| \sqrt{\mu^{(l)}_{j-i} ( \calpha_{-i} (x) ) } - \sqrt{\mu^{(l)}_{j} (x) } \right| \right\} \cdot \|f\| \|u_i\| \\
  \leq & \ \sum_{l=0} ^{2d + 2} \frac{\varepsilon}{2k+1} \cdot \|f\|  
  \quad \quad \quad \Big[ \scriptstyle{ \forall l \in \{0, \ldots, 2d+2\}, \mathrm{supp}(\mu^{(l)}_{-m}) , \ldots, \mathrm{supp}(\mu^{(l)}_{m}) \text{\ are\ mutually\ disjoint} } \Big]\\
  = & \  (2d+3) \cdot \frac{\varepsilon}{2k+1} \cdot \|f\| \; .
 \end{align*}
 Now fix any $b = \sum_{j=-k}^k f_j u_j \in F$. Note that for any $j \in \{-k , \ldots , k\}$, we have $f_j = E(b u_{-j})$, where $E: C_0(X) \rtimes \Z \to C_0(X)$ is the standard conditional expectation, whence $\|f_j\| \leq \|b\| \leq 1$. Since $\mathrm{supp}(e) \subset K$ by our construction, we have
 \begin{align*}
  & \left\| \left( \sum_{l=0} ^{2d + 2} \varphi_{X \smallsetminus Y}^{(l)} \left( \psi_{X \smallsetminus Y}^{(l)} ( e^\frac{1}{2} b e^\frac{1}{2} ) \right) \right) - e^\frac{1}{2} b e^\frac{1}{2} \right\| \\
  \leq & \ \sum_{j=-k}^k \left\| \left( \sum_{l=0} ^{2d + 2} \varphi_{X \smallsetminus Y}^{(l)} \left( \psi_{X \smallsetminus Y}^{(l)} \left( e^\frac{1}{2} f_j \left(e^\frac{1}{2} \circ \calpha_{-j} \right) u_j \right) \right) \right) - e^\frac{1}{2} f_j \left(e^\frac{1}{2} \circ \calpha_{-j} \right) u_j \right\| \\
  \leq & \ \sum_{j=-k}^k (2d+3) \cdot \frac{\varepsilon}{2k+1} \cdot \left\| e^\frac{1}{2} f_j \left(e^\frac{1}{2} \circ \calpha_{-j} \right) \right\| \\
  \leq & \ \varepsilon (2d+3)  \; .
 \end{align*}
 This proves the claim.
\end{proof}

The following corollary addresses the case in which the space is not separable. The reason for the difference in the statement is to avoid the issue of defining $\dim(X)$ when $X$ is not metrizable. The appropriate definition in our case is $\dimnuc(C_0(X))$.
\begin{cor}\label{cor:nonmetrizable}
 Let $X$ be a locally compact Hausdorff space and $\calpha \in \mathrm{Homeo}(X)$.  Then 
 \[
  \dimnuc(C_0(X) \rtimes_{\alpha} \Z) \leq 2 \big( \dimnuc(C_0(X)) \big)^2 + 6 \dimnuc(C_0(X)) + 4 \; .
 \]
\end{cor}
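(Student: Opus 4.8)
The plan is to reduce the non-metrizable case to the metrizable one already settled in Theorem~\ref{thm:estimate-dimnuc-Z}, by means of an equivariant separabilization, using Lemma~\ref{lem:separable-dimnuc}. Write $A = C_0(X)$ with the induced $\Z$-action $\alpha$, and set $d = \dimnuc(C_0(X))$. If $d = \infty$ there is nothing to prove, so assume $d < \infty$. Applying Lemma~\ref{lem:separable-dimnuc} to $G = \Z$ and the $\Z$-$C^*$-algebra $A$, I would express $A$ as a direct limit $A = \varinjlim_\lambda B_\lambda$ of separable $\Z$-invariant $C^*$-subalgebras $B_\lambda \subseteq A$, each satisfying $\dimnuc(B_\lambda) \leq \dimnuc(A) = d$, with $\Z$-equivariant connecting maps.

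Each $B_\lambda$ is a separable commutative $C^*$-algebra, being a subalgebra of $C_0(X)$, hence of the form $B_\lambda \cong C_0(Y_\lambda)$ for a second countable, and therefore metrizable, locally compact Hausdorff space $Y_\lambda$. The restriction of the $\Z$-action to $B_\lambda$ corresponds to a homeomorphism of $Y_\lambda$, so Theorem~\ref{thm:estimate-dimnuc-Z} applies and gives
\[
 \dimnuc(B_\lambda \rtimes \Z) \leq 2 \big( \dim(Y_\lambda) \big)^2 + 6 \dim(Y_\lambda) + 4 \, .
\]
Since $Y_\lambda$ is locally compact metrizable, the identification of nuclear dimension with covering dimension for commutative $C^*$-algebras from \cite{winter-zacharias} yields $\dim(Y_\lambda) = \dimnuc(C_0(Y_\lambda)) = \dimnuc(B_\lambda) \leq d$. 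As the polynomial $t \mapsto 2t^2 + 6t + 4$ is increasing on $[0, \infty)$, this produces the uniform bound $\dimnuc(B_\lambda \rtimes \Z) \leq 2 d^2 + 6 d + 4$, independent of $\lambda$.

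Finally I would pass to the limit. Because the connecting maps are $\Z$-equivariant, the crossed product commutes with the inductive limit, giving $A \rtimes_\alpha \Z = \varinjlim_\lambda (B_\lambda \rtimes \Z)$. Since nuclear dimension does not increase under direct limits,
\[
 \dimnuc(C_0(X) \rtimes_\alpha \Z) = \dimnuc \big( \varinjlim_\lambda (B_\lambda \rtimes \Z) \big) \leq \sup_\lambda \dimnuc(B_\lambda \rtimes \Z) \leq 2 d^2 + 6 d + 4 \, ,
\]
which is exactly the asserted bound with $d = \dimnuc(C_0(X))$.

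The only points requiring genuine care are that the separabilization can be carried out $\Z$-equivariantly while controlling nuclear dimension, which is precisely the content of Lemma~\ref{lem:separable-dimnuc}, and that both the crossed-product construction and nuclear dimension interact correctly with the limit over the directed system of subalgebras. The main conceptual obstacle is thus already resolved by Lemma~\ref{lem:separable-dimnuc}; what remains is to assemble these standard permanence properties. A mild technical subtlety is that the system need not be a sequence, but this causes no difficulty: any finite subset of the dense algebraic crossed product $C_c(\Z, C_0(X))$ has all its coefficients lying in some single $B_\lambda$, so every decomposable approximation can be performed inside one $B_\lambda \rtimes \Z$, where the uniform estimate above applies.
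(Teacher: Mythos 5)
Your proposal is correct and follows essentially the same route as the paper: both invoke Lemma~\ref{lem:separable-dimnuc} to write $C_0(X)$ as a direct limit of $\Z$-invariant separable subalgebras $C_0(Y_\lambda)$ with $\dimnuc(C_0(Y_\lambda)) \leq \dimnuc(C_0(X))$, note that separability of a commutative $C^*$-algebra forces metrizability of its spectrum, apply Theorem~\ref{thm:estimate-dimnuc-Z} to each piece, and conclude via the lower semicontinuity of nuclear dimension under (equivariant) inductive limits of crossed products. Your added remarks --- the monotonicity of $t \mapsto 2t^2+6t+4$, and the observation that finite subsets of $C_c(\Z, C_0(X))$ land in a single $B_\lambda \rtimes \Z$ so the net (rather than sequence) structure is harmless --- are exactly the details the paper leaves implicit.
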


\begin{proof}
 We reduce the situation to the metrizable case as follows. By Lemma \ref{lem:separable-dimnuc}, $C_0(X)$ is the union of $\Z$-invariant separable $C^*$-subalgebras $C_0(Y)$ with $\dim(Y) \leq \dimnuc (C_0(X))$. Therefore if we let $I$ be the net of all $\Z$-invariant separable $C^*$-subalgebras of $C_0(X)$ with nuclear dimension no more than $\dimnuc(C_0(X))$, ordered by inclusion, then since the spectrum of a commutative separable $C^*$-algebra is metrizable, we have
 \begin{align*}
  \dimnuc^{+1}(C_0(X) \rtimes \Z) \leq & \liminf_{C_0(Y) \in I} \dimnuc^{+1}(C_0(Y) \rtimes \Z) 
 \end{align*}
 and hence the statement follows immediately from Theorem \ref{thm:estimate-dimnuc-Z}.
\end{proof}

\begin{rmk}
 One can distill from the proof of Theorem \ref{thm:estimate-dimnuc-Z} the following more general statement, which can be seen as a refinement of \cite[Proposition 2.9]{winter-zacharias}: 
 
 Let $A$ be a $C^*$-algebra and $d_1, d_2 \in \N$. Suppose for any finite subset $F \subset A$ and any $\varepsilon > 0$, there exists an ideal $I \lhd A$, a quasicentral approximate unit $\{e_\lambda\}_{\lambda \in \Lambda} \subset I$ and $\lambda_0\in \Lambda$ such that $\dimnuc(A / I) \leq d_1$ and for all $\lambda \geq \lambda_0$, there exists a $d_2$-decomposable approximation for $e_{\lambda}^{\frac{1}{2}} F e_{\lambda}^{\frac{1}{2}}$ with tolerance $\varepsilon$. Then we have $\dimnuc(A) \leq d_1 + d_2 + 1$.
\end{rmk}

\appendix
\section{}\label{sec:appendix}
\begin{center}
By G\'abor Szab\'o
\footnote{Westf{\"a}lische Wilhelms-Universit{\"a}t, Fachbereich Mathematik, \\ Einsteinstrasse 62, 48149 M{\"u}nster, Germany\\ \textit{E-mail address:} \texttt{gabor.szabo@uni-muenster.de}}
\end{center}

In this note, we establish a technical result of topological nature, Lemma \ref{local marker lemma}, that is needed to prove Theorem \ref{thm:estimate-dimnuc-Z}. (The lemma is stated for actions of arbitrary groups, however only the special case of $\Z$-actions is needed for Theorem \ref{thm:estimate-dimnuc-Z}.)
To be more specific, we need to generalize the topological results from \cite[Section 3]{szabo} that have led to the marker property \cite[4.3, 4.4]{szabo} for free actions on finite-dimensional spaces. For aperiodic homeomorphisms, this has been proved previously by Gutman in \cite{gutman}, building on a technical result by Lindenstrauss from \cite{Lindenstrauss95}.

The way that we generalize the topological results from \cite{szabo} is twofold. First, we do not restrict our attention to topological dynamical systems on compact spaces, but consider the locally compact case. Although it was remarked in \cite[5.4]{szabo} how one could modify the proofs to cover the locally compact case, this was never carried out explicitly or in detail. Secondly, we do not focus only on free actions, or actions having other weaker global freeness properties such as \cite[3.4]{szabo}. Instead, we consider sufficiently good local freeness properties of group actions (with respect to certain finite subsets of the acting group), and deduce a weaker, localized marker-type property, see Lemma \ref{local marker lemma}. Otherwise, the approach is almost identical with that in \cite{szabo}. The reader is warned that what we refer to as a `localized marker-type property' in this note is not related to Gutman's notion of the local marker property as considered in \cite{Gutman-cubical}.

This general perspective, which takes into account local information about a group action rather than global, is crucial for the proof of Theorem \ref{thm:estimate-dimnuc-Z}. 

The results of this note were obtained during the author's doctoral studies and are part of his dissertation \cite{szabo15}.

\begin{defi}[cf.~{\cite[3.1]{Lindenstrauss95}} and {\cite[3.1]{szabo}}] \label{disjointness} 
Let $X$ be a locally compact metric space, $G$ a discrete group and $\alpha: G\curvearrowright X$ an action. Let $M\subset G$ be a subset and $k\in\IN$ be some natural number. We say that a set $E\subset X$ is $(M,k)$-disjoint, if for all distinct elements $\gamma(0),\dots,\gamma(k)\in M$ we have
\[
\alpha_{\gamma(0)}(E)\cap\dots\cap\alpha_{\gamma(k)}(E)=\emptyset.
\]
\end{defi}

\begin{lemma}[cf.~{\cite[3.7]{szabo}}] \label{disjointness neighbourhood} 
Let $X$ be a locally compact metric space with a group action $\alpha: G\curvearrowright X$.
Let $F\fin G$ be a finite subset and $n\in\IN$ a natural number. If a compact subset $E\subset X$ is $(F,n)$-disjoint, then there exists an open, relatively compact neighbourhood $V$ of $E$ such that $\quer{V}$ is $(F,n)$-disjoint.
\end{lemma}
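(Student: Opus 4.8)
The plan is to thicken $E$ to a small metric neighbourhood and then use a compactness argument to show that the required disjointness survives the passage to the closure. First I would invoke local compactness to fix a relatively compact open set $W$ with $E \subseteq W$; it then suffices to produce $V$ with $E \subseteq V \subseteq \overline{V} \subseteq W$, since any such $V$ is automatically relatively compact. Writing $d$ for the metric on $X$, I would work with the open neighbourhoods $V_r = \{x \in X : d(x,E) < r\}$ for $r > 0$. For $r$ small enough one has $V_r \subseteq W$, so that $\overline{V_r}$ is a closed subset of the compact set $\overline{W}$ and hence compact; moreover $\overline{V_r} \subseteq \{x : d(x,E) \le r\}$, which together with $E \subseteq \overline{V_r}$ and the closedness of $E$ gives $\bigcap_{r > 0} \overline{V_r} = E$.

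The heart of the argument is to treat one $(n+1)$-tuple of distinct elements of $F$ at a time. Fix distinct $\gamma_0, \dots, \gamma_n \in F$ and set $K_i = \alpha_{\gamma_i}(E)$, so that $(F,n)$-disjointness of $E$ reads $\bigcap_{i=0}^n K_i = \emptyset$. I claim that $\bigcap_{i=0}^n \alpha_{\gamma_i}(\overline{V_r}) = \emptyset$ for all sufficiently small $r$. If not, then along a sequence $r_k \downarrow 0$ the compact sets $C_k = \bigcap_{i=0}^n \alpha_{\gamma_i}(\overline{V_{r_k}})$ would all be non-empty. Since $r \mapsto \overline{V_r}$ is monotone and each $\alpha_{\gamma_i}$ is a homeomorphism, the $C_k$ form a decreasing family of compacta, so Cantor's intersection theorem gives $\bigcap_k C_k \ne \emptyset$. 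On the other hand, as images under an injection commute with intersections, $\bigcap_k C_k = \bigcap_{i=0}^n \alpha_{\gamma_i}\big(\bigcap_k \overline{V_{r_k}}\big) = \bigcap_{i=0}^n \alpha_{\gamma_i}(E) = \bigcap_{i=0}^n K_i = \emptyset$, a contradiction. Hence some radius works for this tuple.

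To conclude, I would observe that $F$ is finite, so there are only finitely many $(n+1)$-tuples of distinct elements; taking $r$ to be the minimum of the finitely many thresholds produced above (and small enough that $V_r \subseteq W$) yields a single radius for which $\overline{V_r}$ is $(F,n)$-disjoint, using that shrinking $r$ only shrinks $\overline{V_r}$ and hence preserves the emptiness of each relevant intersection. Then $V = V_r$ is the desired open, relatively compact neighbourhood of $E$.

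The only delicate point is the passage from disjointness of $E$ itself to disjointness of the closures of its neighbourhoods. One could instead try to exploit uniform continuity of the $\alpha_{\gamma_i}$ near $E$, but this is awkward to set up cleanly in the locally compact setting; the Cantor intersection argument above avoids it entirely, using only that the $\overline{V_r}$ form a decreasing family of compacta whose intersection is exactly $E$. This is precisely where the hypotheses that $E$ is compact and $F$ is finite are used.
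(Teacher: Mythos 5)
Your proof is correct and follows essentially the same route as the paper: both shrink metric neighbourhoods $B_r(E)$ of $E$, use that $\bigcap_{r>0}\overline{B_r(E)}=E$ together with a Cantor/finite-intersection compactness argument (and injectivity of the $\alpha_\gamma$ to pull intersections inside) to get an admissible radius for each tuple of distinct elements of $F$, and then take the minimum over the finitely many tuples. Your version is merely phrased contrapositively via a sequence $r_k\downarrow 0$, and is if anything slightly more careful about why $\overline{V_r}$ is compact for small $r$.
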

\begin{proof}
Note that for all $S\subset F$ with $n=|S|$, we have
\[\emptyset=\bigcap_{\gamma\in S} \alpha_\gamma(E)= \bigcap_{\gamma\in S} \alpha_\gamma\Bigl( \bigcap_{\eps>0} \quer{B}_\eps(E) \Bigl)
= \bigcap_{\eps>0} \left( \bigcap_{\gamma\in S} \alpha_\gamma(\quer{B}_\eps(E)) \right)\]
By compactness, there must exist some $\eps(S)>0$ such that $\quer{B}_{\eps(S)}(E)$ is compact and 
\[
\displaystyle \bigcap_{\gamma\in S} \alpha_\gamma(\quer{B}_{\eps(S)}(E))=\emptyset.
\]
If we set $\eps=\min\set{ \eps(S) ~|~ S\subset F, n=|S|}$, then $V=B_\eps(E)$ is a relatively compact, open neighbourhood of $E$ whose closure is $(F,n)$-disjoint.
\end{proof}

\begin{defi}[cf.~{\cite[3.2]{Lindenstrauss95}} and {\cite[3.2]{szabo}}]
Let $X$ be a locally compact metric space, $G$ a group and $\alpha: G\curvearrowright X$ an action. Let $d\in\IN$ be a natural number and $M\fin G$ a finite subset. We say that the topological dynamical system $(X,\alpha,G)$ has the $(M,d)$-small boundary property, if whenever $K\subset X$ is compact and $V\supset K$ is open, we can find a relatively compact, open set $U$ with $K\subset U\subset V$ such that $\del U$ is $(M,d)$-disjoint.
\end{defi}

In order to make general statements for actions on finite-dimensional spaces, we naturally need to apply dimension theory for topological spaces.
More specifically, we shall now record some well-known facts about properties of covering dimension, which we will refer to throughout this section. These statements come up in \cite[Section 3]{Lindenstrauss95} and \cite[Section 3]{szabo}, but a detailed treatment can be found in \cite{Engelking}, see in particular \cite[4.1.5, 4.1.7, 4.1.9, 4.1.14, 4.1.16]{Engelking}.
All spaces in question are assumed to be separable metric spaces.
{\renewcommand{\theenumi}{D\arabic{enumi}}
\begin{enumerate}
 \item \label{D1} $A\subset B$ implies $\dim(A)\leq \dim(B)$.
 \item \label{D2} If $\set{B_i}_{i\in\IN}$ is a countable family of closed sets in $A$ with $\dim(B_i)\leq k$, then $\dim(\bigcup B_i)\leq k$.
 \item \label{D3} Let $E \subset A$ be a zero dimensional subset and $x\in U\subset A$ a point with an open neighbourhood. Then there exists some open set $U'\subset A$ with $x\in U'\subset U$ such that $\del U' \cap E=\emptyset$.
 \item \label{D4} If $A\neq\emptyset$, there exists a zero dimensional $F_\sigma$-set $E\subset A$ such that $\dim(A\setminus E)=\dim(A)-1$.
 \item \label{D5} Any countable union of $k$-dimensional $F_\sigma$-sets is a $k$-dimensional $F_\sigma$-set.
\end{enumerate}
}

\begin{lemma}[cf.~{\cite[3.3]{szabo}}] \label{Startlemma} 
Let $X$ be a locally compact metric space. Let $K\subset X$ be compact and $V\subset X$ an open neighbourhood of $K$. Let $E\subset X$ be a zero dimensional subset. Then there exists a relatively compact, open set $U$ with $K\subset U\subset \quer{U}\subset V$ such that $\del U\cap E=\emptyset$.
\end{lemma}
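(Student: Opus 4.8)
The plan is to reduce to a compact, hence separable metric, region, apply the zero-dimensionality fact \ref{D3} at each point, and then patch the resulting sets together using compactness of $K$; in essence the statement is a localized and then globalized version of \ref{D3}. First I would invoke local compactness of $X$: since $K$ is compact and $V\supset K$ is open, there is a relatively compact open set $W$ with $K\subset W\subset\quer{W}\subset V$ (cover $K$ by finitely many relatively compact open sets whose closures lie in $V$, and take their union). The purpose of this step is that $\quer{W}$ is compact metric, hence separable, so that the dimension-theoretic facts \ref{D1}--\ref{D5} may be applied within it; moreover $E\cap\quer{W}\subset E$ is zero dimensional by \ref{D1}.

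The heart of the argument is a pointwise application of \ref{D3} inside the separable metric space $\quer{W}$. For each $x\in K$, the set $W$ is an open neighbourhood of $x$ in $\quer{W}$, so \ref{D3}, applied to the zero dimensional set $E\cap\quer{W}$, produces an open set $U_x\subset\quer{W}$ with $x\in U_x\subset W$ and $\del U_x\cap E=\emptyset$, where the boundary is a priori taken in $\quer{W}$. Since $U_x\subset W$ and $W$ is open in $X$, the set $U_x$ is open in $X$; and since $\quer{U_x}\subset\quer{W}$, it is relatively compact with closure inside $V$.

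Finally I would patch these sets together. The family $\set{U_x}_{x\in K}$ is an open cover of $K$, so by compactness finitely many $U_{x_1},\dots,U_{x_n}$ cover $K$, and I set $U=\bigcup_{i=1}^n U_{x_i}$. Then $K\subset U$, and $\quer{U}=\bigcup_{i=1}^n\quer{U_{x_i}}\subset\quer{W}\subset V$ is compact, giving $K\subset U\subset\quer{U}\subset V$ with $U$ relatively compact. For the boundary I would use the elementary containment $\del U\subset\bigcup_{i=1}^n\del U_{x_i}$, valid for finite unions of open sets, which yields $\del U\cap E=\emptyset$.

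The one point requiring genuine care — and the step I expect to be the main obstacle, since $X$ itself need not be separable and so \ref{D3} cannot be invoked on $X$ directly — is matching the boundary $\del U_x$ computed in $\quer{W}$ with the one computed in $X$. This is resolved by the relative compactness arranged above: because $\quer{U_x}\subset\quer{W}$, the closure of $U_x$ in $X$ coincides with its closure in $\quer{W}$, and as $U_x$ is open in both spaces the two boundaries agree. Hence $\del U_x\cap E=\emptyset$ holds as subsets of $X$, and the globalization step goes through unchanged.
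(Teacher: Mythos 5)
Your proof is correct and follows essentially the same route as the paper's: apply fact \eqref{D3} pointwise, extract a finite subcover by compactness, and conclude via the containment of the boundary of a finite union of open sets in the union of the boundaries. The only differences are minor: the paper applies \eqref{D3} just at points of $\del K$ and sets $U = K \cup \bigcup_{i=1}^M B_i$, whereas you cover all of $K$; and your preliminary passage to the compact (hence separable) set $\quer{W}$, with the check that boundaries in $\quer{W}$ and in $X$ agree, makes explicit a separability point that the paper glosses over, since its facts (D1)--(D5) are stated for separable metric spaces while $X$ is only assumed locally compact metric.
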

\begin{proof}
Clearly $\del K$ is compact. For $x\in\del K$, apply \eqref{D3} and find relatively compact, open neighbourhoods $x\in B_x\subset\quer{B}_x\subset V$ such that $\del B_x \cap E=\emptyset$. Choose a finite cover $\del K\subset \bigcup_{i=1}^M B_i$ of such neighbourhoods and set $U=K\cup\bigcup_{i=1}^M B_i$. It is now immediate that $U$ is relatively compact with $\quer{U}\subset V$ and that $\del U \subset \bigcup_{i=1}^M \del B_i$, so we have indeed $\del U \cap E=\emptyset$.
\end{proof}

The following is an ad-hoc notational convention for this Appendix that makes it easier to keep track of local freeness properties of group actions.

\begin{defi} \label{M-free}
Let $X$ be a locally compact metric space, $G$ a group and $\alpha: G\curvearrowright X$ an action. Let $M\fin G$ be a finite subset. 
We define
\[
X(M) = \set{x\in X ~|~ \text{the map}~[M\ni g\mapsto \alpha_g(x)]~\text{is injective} }.
\]
By continuity, $X(M)$ is an open subset of $X$.
The action $\alpha$ is then free if and only if one has $X(M)=X$ for every $M\fin G$.
\end{defi}

\begin{defi}[following {\cite[Section 3]{Kulesza95}} and {\cite[3.4]{Lindenstrauss95}}] \label{genposition} 
Let $X$ be a metric space of finite covering dimension $n$. A family
$\CB$ of subsets in $X$ is in general position, if for all finite subsets $S\fin\CB$ we have
\[
\dim(\bigcap S) \leq \max(-1,n-|S|).
\]
\end{defi}

\begin{lemma}[cf.~{\cite[3.6]{szabo}}] \label{local TSBP} 
Let $X$ be a locally compact metric space with finite covering dimension $d$, let $G$ be a group and $\alpha: G\curvearrowright X$ an action. Let $M\fin G$ be finite subset.
Let $K\subset X(M)$ be compact and let $V\subset X(M)$ be an open neighbourhood of $K$. Then there exists a relatively compact, open set $U$ with $K\subset U\subset \quer{U}\subset V$ such that the family $\set{\alpha_\gamma(\del U)}_{\gamma\in M}$ is in general position in $X$. In particular, the set $\del U$ is $(M,d)$-disjoint.
\end{lemma}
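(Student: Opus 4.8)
The plan is to argue by induction on $d=\dim(X)$, following closely the strategy of \cite[Section 3]{szabo} and \cite[Section 3]{Lindenstrauss95}. Before starting, observe that the ``in particular'' clause is a free consequence of general position: applying Definition \ref{genposition} with $n=d$ to any subset $S\subseteq M$ of cardinality $d+1$ gives $\dim\bigl(\bigcap_{\gamma\in S}\alpha_\gamma(\del U)\bigr)\leq\max(-1,d-(d+1))=-1$, so that intersection is empty, which is precisely the assertion that $\del U$ is $(M,d)$-disjoint. Hence it suffices to produce a relatively compact open $U$ with $K\subset U\subset\quer{U}\subset V$ for which $\set{\alpha_\gamma(\del U)}_{\gamma\in M}$ is in general position in $X$.

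For the base case $d=0$, recall that a zero-dimensional locally compact metric space has a basis of compact open sets. Covering the compact set $K$ by finitely many such sets contained in $V$ and taking their union produces a compact open $U$ with $K\subset U=\quer{U}\subset V$ and $\del U=\emptyset$. General position then holds trivially, since every member of the family is empty and hence has dimension $-1\leq\max(-1,-|S|)$.

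For the inductive step, assume the conclusion in all dimensions smaller than $d$. Using \eqref{D4}, choose a zero-dimensional $F_\sigma$-subset $E_0\subset X$ with $\dim(X\setminus E_0)=d-1$, and enlarge it to $E=E_0\cup\bigcup_{\gamma\in M}\alpha_{\gamma^{-1}}(E_0)$, which is again a zero-dimensional $F_\sigma$-set by \eqref{D5}. Applying Lemma \ref{Startlemma} to $K\subset V$ and the zero-dimensional set $E$ yields a relatively compact open $U$ with $K\subset U\subset\quer{U}\subset V$ and $\del U\cap E=\emptyset$. This forces $\alpha_\gamma(\del U)\subset X\setminus E_0$ for every $\gamma\in M$, so by \eqref{D1} each translate has dimension at most $d-1$ (settling general position for singletons $S$), and, more importantly, all the translated boundaries lie in the common $(d-1)$-dimensional space $X\setminus E_0$. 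To handle the intersections with $|S|\geq 2$, one runs the general-position bookkeeping of \cite{Kulesza95,Lindenstrauss95}: using the inductive hypothesis together with \eqref{D3}, one pushes each $s$-fold intersection one further dimension down, into the stratum of dimension $d-s$. Here the hypotheses $K,V\subset X(M)$ are essential: by Definition \ref{M-free} the points $\set{\alpha_\gamma(x)}_{\gamma\in M}$ are distinct for every $x$ in the region under consideration, so the translated boundaries behave as independent hypersurfaces that can be separated off any prescribed zero-dimensional set.

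The heart of the matter, and the step I expect to be the main obstacle, is exactly this simultaneous control of all the intersections. Since every $\alpha_\gamma(\del U)$ is the image of the \emph{single} set $\del U$, one cannot adjust one translate without perturbing the others, so a naive induction on $|M|$ is hopeless. The resolution is to carry out the entire dimension reduction inside $X$ itself, via the zero-dimensional decomposition obtained by iterating \eqref{D4}, and to exploit the local $M$-freeness on $X(M)$ to guarantee that the strata belonging to distinct translates meet transversally at the level of covering dimension, thereby forcing each $s$-fold intersection into the part of dimension $d-s$. The passage through the non-locally-compact and non-invariant subset $X\setminus E$ costs nothing dimension-theoretically, since the relevant facts \eqref{D1}, \eqref{D2} and \eqref{D5} hold for arbitrary separable metric spaces.
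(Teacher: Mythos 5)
There is a genuine gap, and it is exactly at the step you flag as ``the heart of the matter.'' Your single application of Lemma \ref{Startlemma} to the set $E=E_0\cup\bigcup_{\gamma\in M}\alpha_{\gamma^{-1}}(E_0)$ only yields the case $|S|=1$ of general position: each translate $\alpha_\gamma(\del U)$ lands in $X\setminus E_0$ and so has dimension at most $d-1$. For $|S|\geq 2$ you offer no mechanism at all --- ``the strata belonging to distinct translates meet transversally at the level of covering dimension'' is not an argument; covering dimension has no transversality theory, and nothing in the hypotheses forces the intersection of two translates of the \emph{same} boundary to drop another dimension just because each lies in a $(d-1)$-dimensional subset. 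Worse, the inductive framework you propose cannot be set up: your induction is on $\dim(X)$, but the only candidate space of lower dimension, $X\setminus E_0$, is neither locally compact nor $G$-invariant (it is a $G_\delta$, and $E_0$ has no reason to be carried to itself by any $\alpha_\gamma$), so the lemma's inductive hypothesis simply does not apply to it. The facts \eqref{D1}, \eqref{D2}, \eqref{D5} do hold for arbitrary separable metric spaces, but that only lets you \emph{measure} dimensions there, not rerun the construction there.

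The paper's proof resolves this by a completely different induction, on $|M|$, together with an \emph{incremental} construction that your one-shot choice of $U$ cannot replicate. Given a set $A_0$ whose first $k$ translated boundaries are in general position, one covers $\del A_0$ by finitely many small balls $B_i$ whose $M$-translates are pairwise disjoint (this is where $K,V\subset X(M)$ is actually used), and then enlarges $A_0$ one ball at a time: at stage $i$, one chooses, via \eqref{D4} and \eqref{D5}, a fresh zero-dimensional $F_\sigma$-set $E$ adapted to \emph{all current intersections} $\CA_i$, and uses Lemma \ref{Startlemma} to attach a piece $W\subset\widehat B_{i+1}$ with $\del W\cap E=\emptyset$. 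The dimension count then works because the translates of $\del W$ are pairwise disjoint, so any multiple intersection of the new boundaries contains at most one ``new'' term $\alpha_{\gamma}(\del W)$, and that term has been arranged to miss the zero-dimensional core $E_{\hat S}$ of the old intersection, forcing the dimension down by one. This per-stage refresh of the avoided zero-dimensional set, combined with the disjointness of the translated balls, is precisely the bookkeeping your proposal gestures at but does not supply; without it the claim for $|S|\geq 2$ is unsupported.
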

\begin{proof}
First observe the following. If $E\subset X$ is any subset such that $\CB=\set{\alpha_\gamma(\del E)}_{\gamma\in M}$ is in general position, then the set $E$ is automatically $(M,d)$-disjoint. This is because by definition, the intersection of $d+1$ distinct sets in $\CB$ has dimension at most $-1$, and is thus empty. So it suffices to show the first part of the above statement.

We prove this by induction in the variable $k=|M|$. The assertion trivially holds for $k=1$. Now assume that the assertion holds for some natural number $k$. We show that it also holds for $k+1$.

Let $M=\set{\gamma(0),\dots,\gamma(k)}$ be a set of cardinality $k+1$ in $G$. Then obviously $X(M)\subset X(M')$ for every subset $M'\subset M$. Using the induction hypothesis, there exists a relatively compact, open set $A_0$ with $K\subset A_0\subset \quer{A}_0\subset V$, such that the collection $\set{\alpha_{\gamma(0)}(\del A_0),\dots,\alpha_{\gamma(k-1)}(\del A_0)}$ is in general position in $X$.

Since $\quer{A}_0\subset V\subset X(M)$, we can find for every point $x\in\del A_0$ a number $\eta(x)>0$ such that $\quer{B}_{\eta(x)}(x)\subset V$ and such that the sets $\alpha_{\gamma(j)}(B_{\eta(x)}(x))$ are pairwise disjoint for $j=0,\dots,k$. Denote $\widehat{B}_x = B_{\eta(x)}(x)$ and $B_x=B_{\eta(x)/2}(x)$.
Note that since $A_0$ was relatively compact, its boundary $\del A_0$ is compact. So find some finite subcover $\del A_0 \subset \bigcup_{i=1}^N B_i$. We will now construct relatively compact, open sets $A_i$ for $i=0,\dots,N$ ($A_0$ is already defined) with the following properties:
\begin{itemize}
\item[(1)] $\quer{A}_i \subset A_0\cup\bigcup_{j=1}^N B_j$.
\item[(2)] $A_{i}\subset A_{i+1}\subset A_{i}\cup \widehat{B}_{i+1}$.
\item[(3)] The collection 
\[\CA_i = \set{ \alpha_{\gamma(j)}(\del A_i)}_{j<k}\cup
\Bigl\{\alpha_{\gamma(k)}(\del A_i \cap \bigcup_{j=1}^i B_j) \Bigl\}\]
 is in general position. 
\end{itemize}
Once we have done this construction, combining (1) with (3) implies that the set $U=A_N$ has the desired property. It remains to show how to construct the sets $A_i$. 

So suppose that the set $A_i$ has already been defined for $i<N$. According to \eqref{D4}, for all nonempty subsets $S\subset\CA_i$, there exists a zero dimensional $F_\sigma$-set 
\[
E_S\subset\bigcap S \quad\text{with}\quad\dim(\bigcap S\setminus E_S) = \dim(\bigcap S)-1.
\]
Define
\begin{equation} \label{eq3:e1}
E:= \bigcup_{\emptyset\neq S\subset \CA_i \atop 0\leq j\leq k} \alpha_{\gamma(j)^{-1}}(E_S).
\end{equation}
By \eqref{D5}, $E$ is a zero dimensional $F_\sigma$-set. Use Lemma \ref{Startlemma} to find a relatively compact, open set $W$ such that
\begin{equation} \label{eq3:e2}
\quer{A_i\cap B_{i+1}} \subset W\subset\quer{W}\subset \widehat{B}_{i+1}\cap (A_0\cup\bigcup_{j=1}^N B_j)
\end{equation}
and
\begin{equation} \label{eq3:e3}
\del W \cap E=\emptyset.
\end{equation}
Now set $A_{i+1}:= A_i\cup W$. This clearly satisfies the properties (1) and (2). To show (3), let $\emptyset\neq S=\set{S_1,\dots,S_m}\subset \CA_{i+1}$ correspond to some subset $\set{j_1,j_2,\dots,j_m}\subset\set{0,\dots,k}$. Note that since $\del A_{i+1}\subset\del A_i\cup\del W$, we have either
\[S_l=\alpha_{\gamma(j_l)}(\del A_{i+1}) \subset \alpha_{\gamma(j_l)}(\del A_i)\cup 
\alpha_{\gamma(j_l)}(\del W) =: S_l^0\cup S_l^1\qquad(\text{if}\; j_l\neq k)\]
or
\[\begin{array}{ccc} 
S_l &=&\displaystyle \alpha_{\gamma(j_l)}(\del A_{i+1}\cap\bigcup_{j=1}^{i+1} B_i)\\\\
&\subset& \displaystyle \alpha_{\gamma(j_l)}((\del A_i\setminus W)\cap\bigcup_{j=1}^{i+1} B_i)\cup \alpha_{\gamma(j_l)}(\del W) \\\\
&\stackrel{\ref{eq3:e2}}{\subset}& \displaystyle \alpha_{\gamma(j_l)}(\del A_i\cap\bigcup_{j=1}^i B_j)\cup\alpha_{\gamma(j_l)}(\del W) \\\\
 &=:& S_l^0\cup S_l^1\qquad\qquad(\text{if}\; j_l=k).
\end{array}\]
It follows that
\[
\bigcap S \subset \bigcup_{a\in\set{0,1}^m} \left( \bigcap_{l=1}^m S_l^{a_l} \right).
\]
Since $\quer{W}\subset \widehat{B}_{i+1}$, our choice of $\widehat{B}_{i+1}$ implies that the sets $S_l^1$ are pairwise disjoint. So it suffices to consider the case $a=(0,\dots,0)$ and, since we can change the order without loss of generality, the case $a=(1,0,\dots,0)$. For $a=(0,\dots,0)$, note that $\set{S_1^0,\dots,S_m^0}$ is a subset of $\CA_i$, so we already have
\[\dim\Bigl(\bigcap_{l=1}^m S_l^0\Bigl) \quad\leq\quad \max(-1,n-m).\]
For $a=(1,0,\dots,0)$, define $\hat{S}=\set{S_2^0,\dots,S_m^0}$. This is a subset of $\CA_i$, hence we know that it is in general position. Moreover, considering our choice of the set $E_{\hat{S}}$, recall that
\[\dim(\bigcap \hat{S}\setminus E_{\hat{S}}) \leq\dim(\bigcap\hat{S})-1 \leq \max(-1,n-(m-1))-1 \leq \max(-1,n-m).\]
By the choice of $W$ we know that $\del W\cap E=\emptyset$, see \ref{eq3:e3}. Since $\alpha_{\gamma(j_1)^{-1}}(E_{\hat{S}})\subset E$ (see \ref{eq3:e1}), this implies $E_{\hat{S}}\cap\alpha_{\gamma(j_1)}(\del W)=\emptyset$. In particular, it follows that
\[S_1^1\cap\bigcap_{l=2}^m S_l^0 = \alpha_{\gamma(j_1)}(\del W)\cap\bigcap \hat{S} \subset \bigcap\hat{S}\setminus E_{\hat{S}}.\]
Therefore we have established
\[\dim(S_1^1\cap\bigcap_{l=2}^m S_l^0)\leq\max(-1,n-m).\]
If we combine these inequalities with \eqref{D2}, it follows that we have $\dim(\bigcap S) \leq \max(-1,n-m)$ as well. So $\CA_{i+1}$ is in general position and we are done.
\end{proof} 


Having established localized small boundary conditions out of local freeness properties of a group action, we can now also prove localized marker-type properties:

\begin{lemma}[cf.~{\cite[6.2]{gutman}} and {\cite[4.3]{szabo}}] \label{key lemma} 
Let $G$ be a group and $d\in\IN$ a natural number. Let $F\fin G$ be a finite subset and let $g_1,\dots,g_d\in G$ be group elements with the property that the sets
\[
 F^{-1}F ~,~ g_1 F^{-1}F ~,~\dots ~,~ g_d F^{-1}F
\]
are pairwise disjoint. Using the notation $g_0=1_G$, set $M=\bigcup_{l=0}^d g_l F^{-1}F$.

Let $X$ be a locally compact metric space and $\alpha: G\curvearrowright X$ an action. Then the following holds:

Let $U,V\subset X$ be relatively compact, open sets such that
\begin{itemize}
\item $\del U$ is $(M,d)$-disjoint;
\item $\quer{U}$ is $(F,1)$-disjoint;
\item $\quer{V}$ is $(M^{-1},1)$-disjoint.
\end{itemize}
Then there exists a relatively compact, open set $W\subset X$ such that $U\subset W,~ V\subset\bigcup_{g\in M} \alpha_g(W)$ and $\quer{W}$ is $(F,1)$-disjoint.
\end{lemma}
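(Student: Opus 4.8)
The plan is to construct $W$ directly, by enlarging $U$ with small open neighbourhoods that pull the as-yet-uncovered points of $V$ into $W$; the decisive ingredient will be a pigeonhole argument over the $d+1$ blocks $g_lF^{-1}F$ that exploits the $(M,d)$-disjointness of $\del U$.

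First I would pass to a slightly larger target set. Since $\quer V$ is compact and $(M^{-1},1)$-disjoint, Lemma \ref{disjointness neighbourhood} produces a relatively compact open $V^+\supseteq\quer V$ with $\quer{V^+}$ still $(M^{-1},1)$-disjoint; hence the preimage copies $\set{\alpha_{p^{-1}}(\quer{V^+})}_{p\in M}$ are pairwise disjoint and $\quer V$ lies in the interior of the relevant copy. Next I set $T=\quer V\setminus\bigcup_{g\in M}\alpha_g(U)$, which is compact. The heart of the matter is the following observation for a point $t\in T$: by $(M,d)$-disjointness no point lies in $d+1$ of the translates $\set{\alpha_g(\del U)}_{g\in M}$, so $t$ lies in at most $d$ of them; as $M$ is the disjoint union of the $d+1$ blocks $g_lF^{-1}F$, pigeonhole yields an index $l=l(t)$ with $t\notin\alpha_g(\del U)$ for every $g\in g_lF^{-1}F$. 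Since $t\in T$ is uncovered, also $t\notin\alpha_g(U)$ for these $g$, so $t\notin\alpha_g(\quer U)$ for all $g\in g_lF^{-1}F$; translating by $\alpha_{g_l^{-1}}$ shows that the point $x_t:=\alpha_{g_l^{-1}}(t)$ avoids the closed set $\bigcup_{h\in F^{-1}F}\alpha_h(\quer U)$ and lies in the open copy $\alpha_{g_l^{-1}}(V^+)$.

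Then I would choose, for each $t\in T$, a relatively compact open set $N_t\ni x_t$ small enough that $\quer{N_t}\subseteq\alpha_{g_l^{-1}}(V^+)$ and $\quer{N_t}\cap\bigcup_{h\in F^{-1}F}\alpha_h(\quer U)=\emptyset$; the sets $\alpha_{g_l}(N_t)$ then form an open cover of the compact set $T$, from which I extract a finite subcover indexed by $t_1,\dots,t_n$ and put $W=U\cup\bigcup_{i=1}^n N_{t_i}$. This $W$ is relatively compact and open with $U\subseteq W$, and it covers $V$: any $v\in V$ not already lying in some $\alpha_g(U)$ belongs to $T$, hence to some $\alpha_{g_{l(t_i)}}(N_{t_i})$, which gives $\alpha_{g_{l(t_i)}^{-1}}(v)\in W$ with $g_{l(t_i)}\in M$. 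Finally I would verify that $\quer W=\quer U\cup\bigcup_i\quer{N_{t_i}}$ is $(F,1)$-disjoint by checking $\quer W\cap\alpha_h(\quer W)=\emptyset$ for each $h\in F^{-1}F\setminus\set{1}$ termwise: the $U$–$U$ term is the hypothesis, the $U$–$N$ and $N$–$U$ terms vanish by the choice of the $N_{t_i}$ (using $h,h^{-1}\in F^{-1}F$), and the $N$–$N$ terms vanish automatically because $\quer{N_{t_i}}$ and $\alpha_h(\quer{N_{t_j}})$ sit inside copies $\alpha_{p^{-1}}(\quer{V^+})$ whose $M$-indices are $g_{l(t_i)}$ and $g_{l(t_j)}h^{-1}$ — and the disjointness of the blocks forces these to differ whenever $h\neq 1$.

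The main obstacle is locating the right mechanism rather than the bookkeeping: one must recognize that the purpose of the $d+1$ disjoint blocks together with the $(M,d)$-small-boundary hypothesis is exactly to guarantee, via pigeonhole, a \emph{safe} block in which $t$ avoids all the relevant closed translates of $U$. Once this is isolated, the pairwise disjointness of the $V^+$-copies makes the $N$–$N$ interactions cost nothing, and the only remaining care is the routine compactness and closure bookkeeping needed to keep $W$ relatively compact with a well-behaved boundary.
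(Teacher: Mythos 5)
Your proposal is correct and follows essentially the same route as the paper's proof: both enlarge $U$ by finitely many $\alpha_{g_l^{-1}}$-translates of small neighbourhoods of points of the uncovered set $\quer{V}\setminus\bigcup_{g\in M}\alpha_g(U)$, locate the safe block $g_lF^{-1}F$ by pigeonhole from the $(M,d)$-disjointness of $\del U$, and control the interactions among the added pieces via the $(M^{-1},1)$-disjointness of an enlarged compact set (the paper uses $\quer{B}_\rho(R)$ where you use $\quer{V^+}$). The only cosmetic difference is that the paper first produces a uniform radius $\delta$ by a sequential-compactness contradiction argument and then covers by $\delta$-balls, whereas you choose neighbourhoods pointwise and extract a finite subcover at the end.
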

\begin{proof}

Set $R=\quer{V}\setminus\bigcup_{g\in M} \alpha_g(U)$. Observe that $R$ is compact and $(M^{-1},1)$-disjoint, so apply Lemma \ref{disjointness neighbourhood} and choose $\rho>0$ such that $\quer{B}_\rho(R)$ is compact and $(M^{-1},1)$-disjoint as well.
 We now claim that there exists a $\delta>0$ such that
\begin{equation} \label{eq3:e4}
|\set{g\in M|\; \alpha_g(\quer{U})\cap \quer{B}_\delta(x)\neq\emptyset}| \leq d\quad\text{for all}\; x\in R.
\end{equation}
Assume that this is not true. Let $x_n\in R$ be elements with $\delta_n>0$ such that $\delta_n\to 0$ and
\[
|\set{g\in M|\; \alpha_g(\quer{U})\cap \quer{B}_{\delta_n}(x_n)\neq\emptyset}|\geq d+1\quad\text{for all}\; n.
\]
By compactness, we can assume that $x_n$ converges to some $x\in R$ by passing to a subsequence. Moreover, since $M$ has only finitely many subsets, we can also assume (again by passing to a subsequence if necessary) that there are distinct $\gamma(0),\dots,\gamma(d)\in M$ such that $\alpha_{\gamma(l)}(\quer{U})\cap \quer{B}_{\delta_n}(x_n)\neq\emptyset$ for all $n$ and all $l=0,\dots,d$. But then $\delta_n\to 0$ implies
\[
x\in R\cap \bigcap_{l=0}^d \alpha_{\gamma(l)}(\quer{U}) \subset \bigcap_{l=0}^d \alpha_{\gamma(l)}(\del U) = \emptyset.
\]
So this gives a contradiction to $\del U$ being $(M,d)$-disjoint.
So we may choose a number $\delta\leq\rho$ satisfying \ref{eq3:e4}. Moreover, choose some finite covering
\[
R\subset\bigcup_{i=1}^s B_\delta(z_i)\quad\text{for some}\; z_1,\dots,z_s\in R.
\]
Note that the right-hand side is relatively compact and $(M^{-1},1)$-disjoint by our choice of $\rho$. Since the sets $\set{g_l F^{-1}F \;|\; l=0,\dots,d}$ are pairwise disjoint, observe that \ref{eq3:e4} enables us to define a map $c: \set{1,\dots,s}\to\set{0,\dots,d}$ such that
\begin{equation} \label{eq3:e5}
\alpha_g(\quer{U})\cap \quer{B}_\delta(z_i) = \emptyset\quad\text{for all}\; g\in g_{c(i)} F^{-1}F.
\end{equation}
Finally, set
\[
W=U\cup\bigcup_{i=1}^s \alpha_{g_{c(i)}^{-1}}(B_\delta(z_i)).
\]
Obviously, $W$ is a relatively compact, open set with $U\subset W$. Moreover, we have
\[
\begin{array}{ccc}
V &\subset& \displaystyle \bigcup_{g\in M} \alpha_g(U) \cup R \\
&\subset& \displaystyle  \bigcup_{g\in M} \alpha_g(U) \cup \bigcup_{i=1}^s~ \underbrace{B_\delta(z_i)}_{=\alpha_{g_{c(i)}}(\alpha_{g_{c(i)}^{-1}}(B_\delta(z_i)))} \\
&\subset& \displaystyle  \bigcup_{g\in M} \alpha_g(U) \cup \bigcup_{i=1}^s \alpha_{g_{c(i)}}(W) \quad \subset \bigcup_{g\in M} \alpha_g(W)
\end{array}
\]
At last we have to show that $\quer{W}$ is $(F,1)$-disjoint. Suppose that $\alpha_a(\quer{W})\cap\alpha_b(\quer{W})\neq\emptyset$ for some $a\neq b$ in $F$. That is, there exist $x,y\in\quer{W}$ such that $\alpha_a(x)=\alpha_b(y)$. Let us go through all the possible cases:
\begin{itemize}
\item $x,y\in \quer{U}$ is obviously impossible.
\item $x\in\alpha_{g_{c(i_1)}^{-1}}(\quer{B}_\delta(z_{i_1}))$ and $y\in\alpha_{g_{c(i_2)}^{-1}}(\quer{B}_\delta(z_{i_2}))$ for some $1\leq i_1, i_2\leq s$. It follows that
\[
\alpha_a(x)=\alpha_b(y)\in \alpha_{ag_{c(i_1)}^{-1}}(\quer{B}_\delta(z_{i_1}))\cap\alpha_{bg_{c(i_2)}^{-1}}(\quer{B}_\delta(z_{i_2})),
\]
so
\[
\begin{array}{ccl}
\emptyset &\neq& \alpha_{b^{-1} a g_{c(i_1)}^{-1}}(\quer{B}_\delta(z_{i_1}))\cap \alpha_{g_{c(2)}^{-1}}(\quer{B}_\delta(z_{i_2}))\\\\
&\subset& \alpha_{b^{-1}a g_{c(i_1)}^{-1}}(\quer{B}_\rho(R)))\cap \alpha_{g_{c(i_2)}^{-1}}(\quer{B}_\rho(R)).
\end{array}
\]
Observe that by $a\neq b$, we have $b^{-1}ag_{c(i_1)}^{-1}\neq g_{c(i_2)}^{-1}$ in $M^{-1}$.
Since $\quer{B}_\rho(R)$ is $(M^{-1},1)$-disjoint, the right side of the above is empty. So this is impossible.
\item $x\in\quer{U}$ and $y\in\alpha_{g_{c(i)}^{-1}}(\quer{B}_\delta(z_i))$ for some $1\leq i\leq s$. Then it follows that
\[
\alpha_a(x)=\alpha_b(y)\in\alpha_{a}(\quer{U})\cap\alpha_{bg_{c(i)}^{-1}}(\quer{B}_\delta(z_i))\neq\emptyset.
\]
Or equivalently, $\alpha_{g_{c(i)}b^{-1}a}(\quer{U})\cap \quer{B}_\delta(z_i)\neq\emptyset$, a contradiction to the definition of $c(i)$, see \ref{eq3:e5}.
\end{itemize}
So we see that $\quer{W}$ is indeed $(F,1)$-disjoint.
\end{proof}

The following result constitutes the main technical result of this Appendix:

\begin{lemma}[cf.~{\cite[6.1]{gutman}} and {\cite[4.4]{szabo}}] \label{local marker lemma} 
Let $G$ be a group and $d\in\IN$ a natural number. Let $F\fin G$ be a finite subset and let $g_0,g_1,\dots,g_d\in G$ be group elements with the property that the sets
\[
 g_0 F^{-1}F ~,~ g_1 F^{-1}F ~,~\dots ~,~ g_d F^{-1}F
\]
are pairwise disjoint. Set $M=\bigcup_{l=0}^d g_l F^{-1}F$.

Let $X$ be a locally compact metric space with an action $\alpha: G\curvearrowright X$ such that $(X,\alpha,G)$ has the $(M,d)$-small boundary property. Moreover, assume that $X(M^{-1})=X$. Then given any compact subset $K\subset X$, there exists a relatively compact, open set $Z\subset X$ with $K\subset \bigcup_{g\in M} \alpha_g(Z)$ and $\alpha_g(\quer{Z})\cap\alpha_h(\quer{Z})=\empty$ for all $g\neq h$ in $F$.
\end{lemma}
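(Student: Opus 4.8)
The plan is to iterate the enlargement procedure of Lemma \ref{key lemma}, using the $(M,d)$-small boundary property to repair the boundary condition between successive applications, and to seed the iteration with an initial cover coming from the hypothesis $X(M^{-1})=X$. First I would reduce to the case $g_0=1_G$, which is what Lemma \ref{key lemma} is stated for. Replacing each $g_l$ by $g_0^{-1}g_l$ turns $g_0 F^{-1}F$ into $F^{-1}F$ and replaces $M$ by $g_0^{-1}M$; since $\alpha_{g_0^{-1}}$ is a homeomorphism, a set is $(M,d)$-disjoint if and only if it is $(g_0^{-1}M,d)$-disjoint, so the small boundary property is unaffected, and $X((g_0^{-1}M)^{-1})=\{x : \alpha_{g_0}(x)\in X(M^{-1})\}=X$. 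Applying the special case to the compact set $\alpha_{g_0^{-1}}(K)$ and then translating by $\alpha_{g_0}$ recovers the general conclusion, because $g_0(g_0^{-1}M)=M$. So from now on I assume $g_0=1_G$, whence $F^{-1}F\subset M$ and Lemma \ref{key lemma} applies verbatim.

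Next I would produce the starting cover. For each $x\in K$ we have $x\in X(M^{-1})=X$, so the points $\{\alpha_g(x)\}_{g\in M^{-1}}$ are pairwise distinct; by continuity $x$ has a relatively compact, open neighbourhood $V_x$ with $\quer{V}_x$ being $(M^{-1},1)$-disjoint. By compactness of $K$, finitely many of these, say $V_1,\dots,V_s$, cover $K$.

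The heart of the argument is to build an increasing chain $W_0\subset W_1\subset\cdots\subset W_s$ of relatively compact, open sets such that, for each $i$, the boundary $\del W_i$ is $(M,d)$-disjoint, the closure $\quer{W}_i$ is $(F,1)$-disjoint, and $V_j\subset\bigcup_{g\in M}\alpha_g(W_i)$ for all $j\le i$. Starting from $W_0=\emptyset$, the inductive step applies Lemma \ref{key lemma} with $U=W_{i-1}$ and $V=V_i$ (whose hypotheses hold by the induction invariants together with $(M^{-1},1)$-disjointness of $\quer{V}_i$) to obtain $W'_i\supset W_{i-1}$ with $V_i\subset\bigcup_{g\in M}\alpha_g(W'_i)$ and $\quer{W'}_i$ being $(F,1)$-disjoint. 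The output $W'_i$ need not have $(M,d)$-disjoint boundary, so I repair this in two moves: Lemma \ref{disjointness neighbourhood} gives a relatively compact, open $V'_i\supset\quer{W'}_i$ with $\quer{V'}_i$ still $(F,1)$-disjoint, and then the $(M,d)$-small boundary property applied to the pair $\quer{W'}_i\subset V'_i$ yields $W_i$ with $\quer{W'}_i\subset W_i\subset V'_i$ and $\del W_i$ being $(M,d)$-disjoint. Since $(F,1)$-disjointness passes to subsets, $\quer{W}_i\subset\quer{V'}_i$ is $(F,1)$-disjoint, and all covering relations persist because $W_{i-1}\subset W'_i\subset W_i$. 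Finally $Z=W_s$ satisfies $K\subset\bigcup_i V_i\subset\bigcup_{g\in M}\alpha_g(Z)$ with $\quer{Z}$ being $(F,1)$-disjoint, i.e.\ $\alpha_g(\quer{Z})\cap\alpha_h(\quer{Z})=\emptyset$ for distinct $g,h\in F$, which is exactly the assertion.

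The main obstacle is precisely that no single invariant survives the recursion on its own: Lemma \ref{key lemma} preserves the smallness condition ($(F,1)$-disjointness of the closure) but destroys the regularity condition ($(M,d)$-disjointness of the boundary) that is needed in order to reapply it. The whole point of interleaving the small boundary property with the neighbourhood lemma is to restore the boundary condition while enlarging only within a set whose closure remains $(F,1)$-disjoint, so that both invariants are carried through each step. Everything else—the reduction to $g_0=1_G$ and the extraction of the finite $(M^{-1},1)$-disjoint cover—is routine bookkeeping around this core mechanism.
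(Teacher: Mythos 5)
Your proof is correct and follows the same skeleton as the paper's: reduce to $g_0=1_G$ by left translation, cover $K$ by finitely many relatively compact open sets whose closures are $(M^{-1},1)$-disjoint, and build an increasing chain of sets by applying Lemma \ref{key lemma} once per member of the cover, taking $Z$ to be the last set in the chain. The one genuine difference is your boundary-repair step, and it is a point in your favour. The paper arranges (via the small boundary property) that the initial sets $U_x$ have $(M,d)$-disjoint boundaries, seeds the induction with $U=U_0$, $V=U_1$, and then re-applies Lemma \ref{key lemma} with $U=W_k$; but the conclusion of Lemma \ref{key lemma} only provides that $\quer{W}_k$ is $(F,1)$-disjoint, not that $\del W_k$ is $(M,d)$-disjoint, and the latter hypothesis is genuinely used in that lemma's proof --- it is what makes the compactness argument for the bound (\ref{eq3:e4}) work, via the inclusion $R\cap\bigcap_{l}\alpha_{\gamma(l)}(\quer{U})\subset\bigcap_{l}\alpha_{\gamma(l)}(\del U)$. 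The set $W_{k+1}=W_k\cup\bigcup_{i}\alpha_{g_{c(i)}^{-1}}(B_\delta(z_i))$ produced there has no reason to have $(M,d)$-disjoint boundary, so the paper's induction, as literally written, leaves this hypothesis unverified at every step after the first. Your interleaved repair --- Lemma \ref{disjointness neighbourhood} to fatten $\quer{W'}_i$ into an open $V'_i$ with $(F,1)$-disjoint closure, then the $(M,d)$-small boundary property applied to the pair $\quer{W'}_i\subset V'_i$ to extract $W_i$ with $(M,d)$-disjoint boundary --- restores exactly the missing invariant while preserving the other (since $(F,1)$-disjointness passes to subsets), so your induction closes cleanly. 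The remaining deviations from the paper (starting from $W_0=\emptyset$ rather than $U_0$, so that the covering neighbourhoods themselves never need small boundaries) are cosmetic.
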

\begin{proof}
First we observe that shifting $M$ by a left translation does not affect the $(M,d)$-small boundary property or the property that $X(M^{-1}) = X$. Thus since $K$ was chosen arbitrarily, we may apply a left translation to $M$ and assume without loss of generality that $g_0 = 1_G$.

For all $x\in K$, use $X(M^{-1})=X$ to choose a relatively compact, open neighbourhood $U_x$ such that $\quer{U}_x$ is $(M^{-1},1)$-disjoint. By the $(M,d)$-small boundary property, we can also assume that $\del U_x$ is $(M,d)$-disjoint. Note that since $1_G\in F^{-1}F\subset M$, it follows that every $(M^{-1},1)$-disjoint set is also $(F,1)$-disjoint.

Choose a finite subcovering $K\subset\bigcup_{i=0}^s U_i$. Apply Lemma \ref{key lemma} (with respect to $U=U_0, V=U_1$) to find a relatively compact, open set $W_1$ such that $U_0\subset W_1, U_1\subset\bigcup_{g\in M} \alpha_g(W)$ and such that $\quer{W}_1$ is $(F,1)$-disjoint. Clearly we have $U_0\cup U_1\subset\bigcup_{g\in M} \alpha_g(W_1)$.

Now carry on inductively. If $W_k$ is already defined, apply Lemma \ref{key lemma} (with respect to $U=W_k, V=U_{k+1}$) to find a relatively compact, open set $W_{k+1}$ such that $W_k\subset W_{k+1}$ and $U_{k+1}\subset \bigcup_{g\in M} \alpha_g(W_{k+1})$ and such that $\quer{W}_{k+1}$ is $(F,1)$-disjoint. Note also that if $W_k$ had the property that 
\[
U_0\cup\dots\cup U_k\subset\bigcup_{g\in M}\alpha_g(W_k),\]
then it follows that
\[\begin{array}{ccl} U_0\cup\dots\cup U_k\cup U_{k+1} &\subset& \bigcup_{g\in M} \alpha_g(W_k) \cup U_{k+1} \\\\
&\subset& \bigcup_{g\in M} \alpha_g(W_k) \cup \bigcup_{g\in M} \alpha_g(W_{k+1}) \\\\
&=& \bigcup_{g\in M} \alpha_g(W_{k+1}).
\end{array}
\]
So set $Z=W_s$. The set $\quer{Z}$ is compact and $(F,1)$-disjoint by construction and moreover satisfies  
\[ 
K\subset U_0\cup\dots\cup U_s\subset\bigcup_{g\in M} \alpha_g(Z).
\]
\end{proof}

\bibliographystyle{alpha}
\bibliography{nuclear-dimension-abelian}
\end{document}